\newtheorem{theorem}{Theorem}[section]
\newtheorem{lemma}[theorem]{Lemma}
\newtheorem{remark}[theorem]{Remark}
\newtheorem{prop}[theorem]{Proposition}
\newtheorem{example}[theorem]{Example}
\newtheorem{corollary}[theorem]{Corollary}
\numberwithin{equation}{section}
\newcommand{\R}{{\mathbb R}}
\newcommand{\C}{{\mathbb C}}
\newcommand{\N}{{\mathbb N}}
\newcommand{\cL}{{\mathcal L}}
\newcommand{\cH}{{\mathcal H}}
\newcommand{\cB}{{\mathcal B}}
\newcommand{\cV}{{\mathcal V}}
\newcommand{\Si}{\Sigma}
\newcommand{\ve}{\varepsilon}
\newcommand{\al}{\alpha}
\newcommand{\g}{\gamma}
\newcommand{\vp}{\varphi}
\newcommand{\si}{\sigma}
\newcommand{\su}{\subseteq}
\newcommand{\la}{\langle}
\newcommand{\ra}{\rangle}
\newcommand{\wt}{\widetilde}
\newcommand{\s}{\setminus}
\newcommand{\sC}{\mathsf{C}}
\newcommand{\ov}{\overline}
\newcommand\proj{\mathop{\rm proj\,}}
\newcommand\ind{\mathop{\rm ind\,}}
\newcommand\Ker{\mathop{\rm Ker}}
\begin{document}

\title{The Ces\`aro operator on duals of power series   spaces of infinite type}

\author{Angela A. Albanese, Jos\'e Bonet, Werner \,J. Ricker}

\thanks{\textit{Mathematics Subject Classification 2010:}
Primary 47A10, 47B37; Secondary  46A04, 46A11, 46A13,  46A45, 47A35.}
\thanks{This article is accepted for publication in Journal of Operator Theory}
\keywords{Ces\`aro operator, duals of  power series  spaces,  spectrum,  (LB)-space, mean ergodic operator,
nuclear space.}

\address{ Angela A. Albanese\\
Dipartimento di Matematica e Fisica ``E. De Giorgi''\\
Universit\`a del Salento- C.P.193\\
I-73100 Lecce, Italy}
\email{angela.albanese@unisalento.it}

\address{Jos\'e Bonet \\
Instituto Universitario de Matem\'{a}tica Pura y Aplicada
IUMPA \\
Universitat Polit\`ecnica de
Val\`encia\\
E-46071 Valencia, Spain} \email{jbonet@mat.upv.es}

\address{Werner J.  Ricker \\
Math.-Geogr. Fakult\"{a}t \\
 Katholische Universit\"{a}t
Eichst\"att-Ingol\-stadt \\
D-85072 Eichst\"att, Germany}
\email{werner.ricker@ku-eichstaett.de}
\markboth{A.\,A. Albanese, J. Bonet, W. \,J. Ricker}%
{\MakeUppercase{ }}

\begin{abstract}
A detailed  investigation is made of the continuity, spectrum and mean ergodic properties of the Ces\`aro operator $\sC$ when acting
on the strong  duals of power series  spaces of infinite type. There is a dramatic difference in the nature of the spectrum of $\sC$
depending on whether or not the strong dual space (which is always Schwartz) is nuclear.
\end{abstract}

\maketitle

\markboth{A.\,A. Albanese, J. Bonet and W.\,J. Ricker}%
{\MakeUppercase{The Ces\`aro operator }}

\section{Introduction and Notation.}

The discrete Ces\`aro operator $\sC$ is defined on the linear space $\C^\N$ (consisting of all scalar sequences) by
\begin{equation}\label{eq.Ce-op}
\sC x:=\left(x_1, \frac{x_1+x_2}{2}, \ldots, \frac{x_1+\ldots +x_n}{n}, \ldots\right),\quad x=(x_n)_{n\in\N}\in \C^\N.
\end{equation}
The linear operator $\sC$ is said to \textit{act} in a vector subspace  $X\su\C^\N$ if it maps $X$ into itself. Of particular interest is the situation when $X$ is a Fr\'echet space
 or an (LF)-space.      Two fundamental questions in this case are: Is $\sC\colon X\to X$ continuous and, if so, what is its spectrum?
For a large collection of classical Banach spaces $X\su \C^\N$ where precise answers are known we refer to the Introductions in \cite{ABR-9}, \cite{ASM}, for
example.
The discrete Ces\`aro operator $\sC$ acting on the Fr\'echet sequence  space $\C^{\N}$,  on  $\ell^{p+} := \cap_{q>p} \ell_q$, and on the
power series spaces $\Lambda_0(\al):= \Lambda_0^1 (\al)$ of finite type   was investigated in \cite{ABR-7},
 \cite{ABR-1}, \cite{ASM}, respectively. The aim of this paper is to investigate the behaviour of $\sC$ when it acts on the \textit{strong  duals}
 $(\Lambda^1_{\infty}(\alpha))'$  of power series   spaces  $\Lambda^1_{\infty}(\alpha)$  of \textit{infinite type}.
  Power series spaces of infinite type play an important role in the isomorphic classification of Fr\'echet spaces,  \cite{MV}, \cite{V1}, \cite{V2}.
The reason for concentrating on the infinite type dual spaces $(\Lambda^1_\infty (\al))'$ is that the Cesàro operator $\sC$ \textit{fails} to be
continuous on ``most'' of the finite type dual spaces $(\Lambda_0^1 (\al))'$. This is explained more precisely in an Appendix (Section 5) at the end of the paper.

In order to describe the main results we require some notation and definitions.

Let $X$ be a locally convex Hausdorff space (briefly, lcHs) and $\Gamma_X$ a system of continuous seminorms determining the topology of $X$.
Let $X'$ denote the space of all continuous linear functionals on $X$.
 The family of all bounded subsets of $X$ is denoted by $\cB (X)$.
Denote the identity operator on $X$ by $I$. Let  $\cL(X)$ denote the space of all continuous linear operators from $X$ into itself.
For  $T\in \cL(X)$, the \textit{resolvent set} $\rho(T)$ of $T$ consists of all $\lambda\in\C$ such that $R(\lambda,T):=(\lambda I- T)^{-1}$ exists in $\cL(X)$. The set  $\sigma(T):=\C\setminus \rho(T)$ is called the \textit{spectrum} of $T$.
The \textit{point spectrum} $\sigma_{pt}(T)$ of $T$ consists of all $\lambda\in\C$ such that $(\lambda I-T)$ is not injective. If we need to stress the space $X$, then we also write $\sigma(T;X)$, $\sigma_{pt}(T;X)$ and $\rho(T;X)$. Given $\lambda, \mu\in \rho(T)$  the \textit{resolvent identity} $R(\lambda,T)-R(\mu,T)=(\mu-\lambda) R(\lambda,T)R(\mu,T)$  holds.
Unlike for Banach spaces, it may happen that $\rho(T)=\emptyset$ (cf.\,Remark \ref{r26}(ii)) or that $\rho(T)$ is not open in $\C$; see Proposition \ref{P.spectrum}(i) for example.
That is why some authors prefer the subset $\rho^*(T)$ of $\rho(T)$
 consisting of all $\lambda\in\C$ for which  there exists $\delta>0$ such that  the open disc  $ B(\lambda,\delta):=\{z\in\C\colon |z -\lambda| < \delta\} \su \rho(T)$
  and  $\{R(\mu,T) \colon \ \mu\in  B(\lambda,\delta)  \}$ is equicontinuous in $\cL(X)$. If $X$ is a Fr\'echet space or even an  (LF)-space, then  it suffices that such sets are
   bounded in $\cL_s(X)$, where $\cL_s(X)$ denotes  $\cL(X)$ endowed with the strong operator topology $\tau_s$ which is determined by the  seminorms
  $ T \mapsto q_x(T):=q(Tx)$, for  all  $x\in X$ and $q\in \Gamma_X$. The advantage of $\rho^*(T)$, whenever it is non-empty, is that it is open and the resolvent map
  $R\colon \lambda\mapsto R(\lambda,T)$ is holomorphic from $\rho^*(T)$ into $\cL_b(X)$, \cite[Proposition 3.4]{ABR-5}. Here $\cL_b(X)$ denotes  $\cL(X)$ endowed with the
     lcH-topology $\tau_b$ of uniform convergence on members of $\cB (X)$; it is   determined by the  seminorms $T \mapsto q_B(T):=\sup_{x\in B}q(Tx)$, for  $T\in \cL(X)$,
  for all  $B\in \cB(X)$ and $q\in \Gamma_X$. Define $\sigma^*(T):=\C\setminus \rho^*(T)$, which  is a closed set  containing $\sigma(T)$.  If $T\in \cL(X)$ with $X$  a Banach space,
  then $\sigma(T)=\sigma^*(T)$. In \cite[Remark 3.5(vi), p.265]{ABR-5}  an example of a continuous linear operator $T$ on a Fr\'echet space $X$   is presented such that
  $\ov{\sigma(T)}\subset\sigma^*(T)$ properly. For undefined concepts concerning lcHs' see \cite{Ja}, \cite{MV}.

Each positive, strictly increasing sequence $\al = (\al_n)$ which tends to infinity generates a power series  space $\Lambda^1_{\infty}(\alpha)$ of
infinite type; see Section 2. The strong dual $E_\al \su \C^\N $ of $\Lambda^1_{\infty}(\alpha)$ is then a co-echelon space, i.e., a particular kind of inductive limit
of Banach spaces (of sequences), which is necessarily a Schwartz space in our setting. It turns out (cf. Proposition \ref{C-Cont}) that always $\sC \in \cL (E_\al)$.
Furthermore, it is known that the nuclearity of the space $E_\al$ is characterized by the condition $\sup_{n \in \N } \frac{\log (n)}{\al_n} < \infty $. Remarkably,
this is equivalent to the operator $\sC \in \cL (E_\al)$ being invertible, i.e., $0 \in \rho (\sC ; E_\al)$; see Proposition \ref{P.sp0}. Actually, the main results of this
section (namely, Proposition \ref{P.spectrum} and Corollary \ref{C210})  establish the equivalence of the following assertions:
\begin{enumerate}
  \item [(i)]
$E_\al $  \textit{ is nuclear}.
  \item [(ii)]
 $\si  (\sC ; E_\al) = \si_{pt}  (\sC ; E_\al)$ .
  \item [(iii)]
$\si  (\sC ; E_\al) = \{\frac 1 n : n \in \N \}$.
\end{enumerate}
Moreover, in this case we have $\si^*  (\sC ; E_\al)  = \{0\} \cup \si  (\sC ; E_\al) $. So, whenever $E_\al$ is nuclear, the spectra $\si_{pt}  (\sC ; E_\al) $,
$\si  (\sC ; E_\al) $ and $\si^*  (\sC ; E_\al) $ are completely identified. In particular, these spectra of $\sC$ are independent of $\al$.

The operator $ D \in \cL (\C^\N)$ of differentiation (defined in the obvious way) is closely connected to the Cesàro operator $\sC \in \cL (\C^\N)$ via the
identity (valid in $\cL (\C^\N )$)
$$
    \sC^{-1} = (I- S_r) D S_r ,
$$
where $S_r \in \cL (\C^\N)$ is the right-shift operator. It is always the case that $S_r \in \cL (E_\al)$ whenever $\al_n \uparrow \infty $. Moreover, it
follows from (i)-(iii) above that $\sC^{-1} \in \cL (E_\al)$ precisely when $E_\al$ is nuclear. So, the above identity for $\sC^{-1}$ suggests that there should be a connection
between the continuity of $D $ on $E_\al$ and the nuclearity  of $E_\al$. This is clarified by Proposition \ref{pr25}. Namely, $D$ is
continuous on $E_\al$ if and only if $E_\al$ is both nuclear and $\sup_{n \in \N } \frac{\al_{n+1}}{\al_n} < \infty $. Remark \ref{r26}(i) shows that these
two conditions are independent of one another.

Section 3 identifies the spectra of $\sC \in \cL (E_\al)$ in the case when $E_\al$ is not nuclear. We have seen if $E_\al$ is nuclear, then
$\si  (\sC ; E_\al) $ is a bounded, infinite and countable set with no accumulation points. For $E_\al$ \textit{non-nuclear} the spectrum of $\sC$
 is very different. Indeed, in this case
 $$
\si  (\sC ; E_\al) = \{0,1\} \cup \{\lambda \in \C : |\lambda - \textstyle \frac 1 2 | < \frac 1 2 \} \mbox{ and } \si^*  (\sC ; E_\al) =
\{\lambda \in \C : |\lambda - \textstyle \frac 1 2 | \le \frac 1 2 \}
 $$
 whenever $\sup_{n \in \N } \frac{\log (\log (n))}{\al_n} < \infty $, whereas
$$
\si  (\sC ; E_\al) = \si^*  (\sC ; E_\al) =   \{\lambda \in \C : |\lambda - \textstyle \frac 1 2 | \le \frac 1 2 \}
$$
otherwise; see Proposition \ref{pr34}. Again the spectra of $\sC$ are independent of $\al$.

J. von Neumann (1931) proved that unitary operators $T$ in Hilbert space are mean ergodic, i.e., the sequence of its averages $\frac 1 n
\sum^n_{m=1} T^m$, for $n \in \N $, converges for the strong operator topology (to a projection). Ever since, intensive research has been
undertaken to identify the mean ergodicity of individual (and classes) of operators both in Banach spaces and non-normable lcHs'; see
\cite{ABR-0}, \cite{K} for example, and the references therein. In Section 4 it is shown, for every sequence $\al $ with $\al_n \uparrow \infty $, that the Cesàro operator
$\sC \in \cL (E_\al)$ is always  power bounded, (uniformly) mean ergodic and $E_\al = \Ker (I-\sC) \oplus \ov{(I-\sC )(E_\al)} $; see Proposition \ref{p.uniforE}.
Actually, even the sequence $\{\sC^m\}^\infty_{m=1}$ of the iterates of $\sC$ (not just its averages) turns out to be convergent, not only in $\cL_s (E_\al)$  but also in
$\cL_b (E_\al)$; see Proposition \ref{p.iterE}. Furthermore, if $E_\al $ is nuclear, then   the range $(I- \sC)^m (E_\al)$ of the operator $(I- \sC)^m$ is a closed subspace of
$E_\al$ for each $m \in \N $ (cf. Proposition \ref{p.rangeE}). For $m=1$ this is an analogue, for the operator $\sC \in \cL (E_\al)$, of a result of
M. Lin for arbitrary uniformly mean ergodic   Banach space operators $T$ which satisfy $\lim_{n \to \infty } \frac{\|T^n\|}{n} =0$, \cite{Li}.

\section{The  Spectrum of $\sC$ in the nuclear case}

Let $\alpha:=(\alpha_n)$ be a  positive, strictly increasing sequence tending to infinity, briefly, $\alpha_n \uparrow \infty $. Let $(s_k)\su (1,\infty)$
be another strictly increasing  sequence satisfying  $s_k \uparrow \infty $ . For each $k \in\N$,  define $v_k : \N \to (0, \infty )$ by
$v_k(n):=  s_k^{-\alpha_n}$ for $n \in \N $. Then $v_k(n)\geq v_k(n+1)$, for $n  \in \N $, i.e., $v_k $ is a decreasing sequence,
and $v_k \geq v_{k+1}$ pointwise on $\N $  for all $k  \in\N$. Set $\cV:=(v_k)$ and note that $v_k \in c_0 $ for all $k \in \N $.

Define the co-echelon spaces $E_\alpha:=\ind_k c_0(v_k)$, that is, $E_\alpha$ is the (increasing) union of the  weighted Banach spaces $c_0(v_k)$, $k\in\N$, endowed with the finest lcH-topology such that each natural  inclusion map $c_0(v_k)\hookrightarrow E_\alpha$ is continuous. Since $\lim_{n\to\infty}\frac{v_{k+1}(n)}{v_k(n)}=0$,  for  $k\in\N$, implies that
$\ell_\infty(v_k) \su c_0 (v_{k+1})$ continuously,  for $k \in \N $, it follows that also $E_\alpha:=\ind_k  \ell_\infty (v_k) $. Observing that the power series
space $\Lambda^1_\infty(\alpha):=\proj_k\ell_1(v_k^{-1})$ of \textit{infinite type} is Fr\'echet-Schwartz (hence, distinguished), \cite[p.\,357]{MV}, it follows
that $E_\alpha := \ind_k c_0 (v_k) = \ind_k \ell_\infty (v_k) = (\Lambda^1_\infty (\al))'$ is the \textit{strong dual}  of $\Lambda^1_\infty (\al)$,
\cite[Remark 25.13]{MV}. The condition  $\frac{v_{k+1}}{v_k} \in c_0$ for $k \in \N $ implies  that
  $E_\alpha$ is always  a (DFS)-space, \cite[p.\,304]{MV},  and in particular, a Montel space, \cite[Remark 24.24]{MV}.
Note that power series spaces in \cite[Chapter 24]{MV} are defined  using $\ell_2$-norms.
It follows from \cite[Proposition 29.6]{MV} that  $\Lambda^1_\infty(\alpha)$ is a nuclear Fr\'echet space (equivalently, $E_\alpha$ is a (DFN)-space) if and only if $\sup_{n\in\N}\frac{\log n}{\alpha_n}<\infty$. This criterion plays a relevant role throughout this section. As the space $E_\alpha$ does not change if $(s_k)$ is replaced by any other strictly increasing sequence
 in $ (1, \infty )$ tending to infinity, we  sometimes choose $s_k :=e^k$, $k\in\N$.  For each $k \in \N $, define the norm
$$
  q_k(x):=\sup_{n\in\N}v_k(n)|x_n|, \qquad  x=(x_n) \in \ell_\infty (v_k) ,
$$
 whose restriction to $c_0 (v_k)$ is the norm of $c_0 (v_k)$. Observe, for each $k \in \N $, that $c_0 (v_k)  \su c_0 (v_l)$
for every $l \in \N $ with $l \ge k $, and
\begin{equation}\label{21}
    q_l (x) \le q_k (x) , \qquad x \in c_0 (v_k)  .
\end{equation}

As general references for co-echelon spaces we refer to \cite{BMS}, \cite{B}, \cite{Ko}, \cite{MV}, for example.

\begin{prop}\label{C-Cont}
For each $ \al_n \uparrow \infty $  the Ces\`aro operator satsifies  $\sC \in \cL (E_\alpha)$.
\end{prop}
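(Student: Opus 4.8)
The plan is to show that $\sC$ maps each step $\ell_\infty(v_k)$ continuously into $\ell_\infty(v_k)$ itself; since $E_\alpha = \ind_k \ell_\infty(v_k)$ carries the inductive limit topology, this is more than enough to conclude $\sC \in \cL(E_\alpha)$. Thus fix $k \in \N$ and consider $x = (x_n) \in \ell_\infty(v_k)$, i.e.\ $q_k(x) = \sup_n v_k(n)|x_n| < \infty$. Write $y = \sC x$, so $y_n = \frac{1}{n}\sum_{j=1}^n x_j$. The goal is an estimate $q_k(\sC x) \le M_k\, q_k(x)$ for some constant $M_k$ independent of $x$.

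First I would bound $|y_n|$: since $|x_j| \le q_k(x)/v_k(j) = q_k(x)\, s_k^{\alpha_j}$ and the $\alpha_j$ are increasing, we get $|y_n| \le \frac{q_k(x)}{n}\sum_{j=1}^n s_k^{\alpha_j} \le q_k(x)\, s_k^{\alpha_n}$ (the largest term dominates the average). Multiplying by $v_k(n) = s_k^{-\alpha_n}$ gives $v_k(n)|y_n| \le q_k(x)$ for every $n$, hence $q_k(\sC x) \le q_k(x)$. In particular $\sC x \in \ell_\infty(v_k)$ with norm at most $q_k(x)$, so $\sC$ restricts to a (norm-one) continuous operator on each $\ell_\infty(v_k)$. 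By the universal property of the inductive limit — each inclusion $\ell_\infty(v_k) \hookrightarrow E_\alpha$ is continuous and $\sC$ maps $\ell_\infty(v_k)$ into $\ell_\infty(v_k) \hookrightarrow E_\alpha$ continuously — the operator $\sC \colon E_\alpha \to E_\alpha$ is continuous, i.e.\ $\sC \in \cL(E_\alpha)$. Of course one should also note $\sC$ is well defined on $\C^\N$ by \eqref{eq.Ce-op} and maps $E_\alpha$ into $\C^\N$, so the above genuinely exhibits $\sC(E_\alpha) \subseteq E_\alpha$.

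The only point requiring the slightest care — and it is where the structure of $v_k$ enters — is the monotonicity step $\sum_{j=1}^n s_k^{\alpha_j} \le n\, s_k^{\alpha_n}$, which uses that $\alpha$ is increasing and $s_k > 1$, so that $v_k$ is a decreasing weight; this is exactly the hypothesis recorded before the statement ($v_k(n) \ge v_k(n+1)$). There is no real obstacle here: unlike the finite-type case, where the analogous averaging estimate fails (cf.\ the Appendix), the decreasing nature of the weights makes the Cesàro averages automatically controlled by the weighted sup norm on each individual step, and continuity on the (LB)-space follows formally. One could alternatively phrase the argument on the $c_0(v_k)$ steps, noting additionally that $\sC x \in c_0(v_k)$ when $x \in c_0(v_k)$ (finitely supported sequences are dense and $\sC$ preserves $c_0(v_k)$-smallness by the same estimate), but the $\ell_\infty(v_k)$ version is the most economical.
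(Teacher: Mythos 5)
Your argument is correct and is essentially the paper's proof: the paper establishes continuity of $\sC$ on each step (citing Corollary 2.3(i) of \cite{ABR-9}, which is exactly your contractive estimate $q_k(\sC x)\le q_k(x)$ for decreasing weights) and then invokes the universal property of the inductive limit $E_\al=\ind_k c_0(v_k)$. Your choice to work with the $\ell_\infty(v_k)$ steps rather than $c_0(v_k)$ is a harmless simplification, legitimate because $E_\al=\ind_k \ell_\infty(v_k)$ as well, and it spares you the (easily handled) verification that $\sC$ preserves the $c_0$-condition.
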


\begin{proof}
Since each sequence  $v_k$ , for $k \in \N $, is  decreasing,   Corollary 2.3(i) of \cite{ABR-9} implies that the Cesàro operator at
each step, namely  $\sC\colon c_0(v_k)\to c_0(v_k)$, for $k \in \N $,  is continuous.   The result then  follows  from the general theory
of (LB)-spaces as  $E_\al = \ind_k c_0 (v_k)$.
\end{proof}

\begin{lemma}\label{L.s1}
Let $\al_n \uparrow \infty $. The following conditions are equivalent.
\begin{itemize}
\item[\rm (i)] $\sup_{n\in\N}\frac{\log n}{\alpha_n}<\infty$.
\item[\rm (ii)] For each $ \g > 0$ there exists $M( \g)\in\N$ such that $\sup_{n\in\N} n^\g e^{-M(\g)\alpha_n}<\infty$.
\item[\rm (iii)] For some $\g > 0$ and $M (\g) \in  \N $ we have $\sup_{n \in \N } n^\g e^{-M (\g) \al_n} < \infty  $.
\end{itemize}
\end{lemma}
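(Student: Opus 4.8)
The plan is to prove the cycle (i) $\Rightarrow$ (ii) $\Rightarrow$ (iii) $\Rightarrow$ (i), which is the most economical way to establish the three-way equivalence. The substance is entirely in (i) $\Rightarrow$ (ii), since (ii) $\Rightarrow$ (iii) is a triviality (just take any one admissible $\g$, e.g. $\g = 1$) and (iii) $\Rightarrow$ (i) is a short logarithmic manipulation.

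For (i) $\Rightarrow$ (ii), set $C := \sup_{n} \frac{\log n}{\al_n} < \infty$, so that $\log n \le C\al_n$, i.e. $n \le e^{C\al_n}$, for all $n \in \N$. Given $\g > 0$, I want $\sup_n n^\g e^{-M\al_n} < \infty$ for a suitable $M = M(\g) \in \N$. From $n^\g \le e^{\g C \al_n}$ we get $n^\g e^{-M\al_n} \le e^{(\g C - M)\al_n}$, so it suffices to choose $M \in \N$ with $M \ge \g C$; then the exponent is $\le 0$ and, since $\al_n > 0$, the supremum is $\le 1 < \infty$. (One should note $\al_n \uparrow \infty$ guarantees the exponential is genuinely bounded, but in fact $e^{(\g C - M)\al_n} \le 1$ already does the job once $M \ge \g C$, regardless of monotonicity.)

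For (ii) $\Rightarrow$ (iii): this is immediate. Apply (ii) with $\g = 1$ to obtain $M(1) \in \N$ with $\sup_n n\, e^{-M(1)\al_n} < \infty$, which is exactly an instance of (iii).

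For (iii) $\Rightarrow$ (i): suppose $K := \sup_n n^\g e^{-M\al_n} < \infty$ for some fixed $\g > 0$, $M \in \N$. Taking logarithms, $\g \log n - M\al_n \le \log K$ for all $n$, hence $\g \log n \le M\al_n + \log K$. For $n \ge 2$ we have $\log n > 0$, so $\frac{\log n}{\al_n} \le \frac{M}{\g} + \frac{\log K}{\g \al_n} \le \frac{M}{\g} + \frac{|\log K|}{\g \al_1}$, which is a bound independent of $n$; the single term $n = 1$ contributes $\frac{\log 1}{\al_1} = 0$. Therefore $\sup_n \frac{\log n}{\al_n} < \infty$, which is (i).

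The whole argument is routine; there is no real obstacle, only bookkeeping with logarithms and the positivity of the $\al_n$. The one point to state carefully is that in (i) $\Rightarrow$ (ii) the choice of $M$ depends on $\g$ (and on the constant $C$), which is exactly why (ii) is phrased with a quantifier $M(\g)$ depending on $\g$, and that $\al_1 > 0$ is used to absorb the constant term uniformly in the other two implications.
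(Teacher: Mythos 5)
Your proof is correct and takes essentially the same route as the paper's: the same cycle (i)$\Rightarrow$(ii)$\Rightarrow$(iii)$\Rightarrow$(i) with the same choice $M(\gamma)\ge \gamma C$ in the first step and the same logarithmic rearrangement in the last. The only cosmetic difference is that in (iii)$\Rightarrow$(i) you absorb the constant term using $\alpha_n\ge\alpha_1>0$, whereas the paper invokes $\alpha_n\to\infty$; both are fine.
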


\begin{proof}
(i)$\Rightarrow$(ii). Fix any $\g > 0$. By assumption there exists $D>0$ such that $\log n\leq D\alpha_n$ for all $n\in\N$.
Let $M(\g)\in\N$ satisfy $M(\g)\geq \g D$. Then $ \g \log n\leq  \g D\alpha_n\leq M(\g)\alpha_n$ for all $n\in\N$ and hence, $n^\g\leq e^{M(\g)\alpha_n}$ for all $n\in\N$.

(ii)$\Rightarrow$(iii) is clear.

(iii)$\Rightarrow$(i). By assumption  $\sup_{n\in\N} n^\g e^{-M(\g)\alpha_n}<\infty$. So, there exists $D>1$ such that
$n^\g \leq D e^{M(\g)\alpha_n}$ for all $n\in\N$. It follows for each $n\in\N$ that $\frac{\log n}{\alpha_n}\leq \frac{\log D}{\g \alpha_n}+\frac M \g $.
Since $\alpha_n\to \infty$,   we can conclude  that $\sup_{n\in\N}\frac{\log n}{\alpha_n}<\infty$.
\end{proof}

We now turn our attention to the spectrum of $\sC \in \cL (E_\al)$, for which we introduce  the notation
$\Sigma := \{\frac 1 n : n \in \N  \}$ and $\Sigma_0 := \{0\} \cup \Si $. The Cesàro matrix  $\sC$, when acting in $\C^\N $, is similar
to the diagonal matrix $\mbox{diag} ((\frac 1 n ))$. Indeed, $\sC = \Delta \mathrm{diag} ((\frac 1 n )) \Delta $  with $\Delta = \Delta^{-1}
= (\Delta_{nk})_{n, k \in \N } \in \cL (\C^\N )$ the lower triangular matrix where, for each $n \in \N $, $\Delta_{nk} = (-1)^{k-1}\binom
{n-1} {k -1} $, for $1 \le k < n $ and $\Delta_{nk} =0$ if $k > n $, \cite[pp.\,247-249]{H}. Thus  $\sigma_{pt}(\sC ; \C^\N )=\Sigma$ and
each eigenvalue $\frac 1 n $ has multiplicity $1$ with eigenvector $\Delta e_n$, where $e_n := (\delta_{nk})_{k \in \N }$, for $n \in \N $, are the
canonical basis vectors in $\C^\N $. Moreover, $\lambda I -\sC$ is invertible for each $\lambda \in \C \setminus \Si $. If $X $ is a lcHs continuously
contained in $\C^\N $ and $\sC (X) \su X $, then
\begin{equation}\label{22}
    \sigma_{pt} (\sC ; X) = \{\textstyle \frac 1 n :  n \in \N , \; \Delta e_n \in X \} \su \Si .
\end{equation}
In case the space $\vp$ (of all finitely supported vectors in $\C^\N $) is densely contained in $X$, then $\vp \su X'$ and $\Si \su \si_{pt}
(\sC'  ; X' ) \su  \si (\sC ;  X )$, where $\sC'$ is the dual operator of $\sC$. Observe that always $\Delta e_1 = \boldsymbol{1} := (1)_{n \in \N} \in
c_0 (v_1) \su E_\al$ whenever $\al_n \uparrow \infty $. Since $\vp$ is dense in $E_\al$ for \textit{every} $\al$ with $\al_n \uparrow \infty $,
we conclude that \textit{always}
\begin{equation}\label{23}
    1 \in \si_{pt} (\sC ; E_\al) \su \Si \su \si (\sC ; E_\al).
\end{equation}
We point out that $\sC$ does \textit{not} act in the vector space $\vp : = \ind_k \C^k \su \C^\N $ because $e_1 \in \vp $ but $ \sC e_1 = (\frac 1n) \notin \vp$.

\begin{prop}\label{P.spP}
For $\al $ with  $\al_n \uparrow \infty $ the following assertions are equivalent.
\begin{enumerate}
 \item[\rm (i)]
 $ E_\al $  is nuclear.
 \item[\rm (ii)]
  $\sup_{n\in\N}\frac{\log n}{\alpha_n}<\infty$.
  \item[\rm (iii)]
 $\si_{pt}  (\sC ; E_\al) = \Si $ .
 \item[\rm (iv)]
 $\si_{pt} (\sC; E_\al) \setminus \{1\} \ne \emptyset $.
\end{enumerate}
\end{prop}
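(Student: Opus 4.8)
The plan is to prove the chain of implications (i)$\Leftrightarrow$(ii), (ii)$\Rightarrow$(iii), (iii)$\Rightarrow$(iv) and (iv)$\Rightarrow$(ii), using the general description \eqref{22} of the point spectrum together with the nuclearity criterion. Recall that (i)$\Leftrightarrow$(ii) has essentially already been recorded in the paragraph following the definition of $E_\al$ (via \cite[Proposition 29.6]{MV}), so that equivalence can simply be quoted. The real content is to link these to the point spectrum, and by \eqref{22} this amounts to deciding, for each $n\in\N$, whether the eigenvector $\Delta e_n \in \C^\N$ lies in $E_\al = \ind_k c_0(v_k)$.

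The key step is therefore a size estimate on the coordinates of $\Delta e_n$. For fixed $n$, the vector $\Delta e_n$ has entries $(\Delta e_n)_m = \Delta_{mn} = (-1)^{n-1}\binom{m-1}{n-1}$ for $m\ge n$ and $0$ for $m<n$; in particular its $m$-th coordinate grows like a polynomial in $m$ of degree $n-1$, i.e. $|(\Delta e_n)_m| \le C_n\, m^{n-1}$ for a constant $C_n$ depending only on $n$. Now $\Delta e_n \in E_\al$ iff $\Delta e_n \in c_0(v_k)$ for some $k$, i.e. iff $\sup_m v_k(m)|(\Delta e_n)_m| = \sup_m s_k^{-\al_m} m^{n-1} < \infty$ (and the sequence tends to $0$, which is automatic once the sup is finite since $\al_m\uparrow\infty$) for some $k$. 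Writing $s_k = e^k$, this says exactly $\sup_m m^{n-1} e^{-k\al_m} < \infty$ for some $k\in\N$. For the implication (ii)$\Rightarrow$(iii): under (ii), Lemma \ref{L.s1}(ii) (applied with $\g = n-1$, or with any $\g>0$ if $n=1$) gives such an $M(\g)\in\N$, hence $\Delta e_n\in c_0(v_{M(\g)})\su E_\al$ for every $n$; combined with \eqref{22} and \eqref{23} this yields $\si_{pt}(\sC;E_\al) = \Si$. The implication (iii)$\Rightarrow$(iv) is trivial since $\Si\s\{1\} = \{\tfrac1n : n\ge 2\}\ne\es$.

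For (iv)$\Rightarrow$(ii): if $\si_{pt}(\sC;E_\al)\s\{1\}\ne\es$, then by \eqref{22} there is some $n\ge 2$ with $\Delta e_n\in E_\al$, hence $\sup_m m^{n-1} e^{-k\al_m} < \infty$ for some $k\in\N$. Since $n\ge 2$ we have $\g := n-1 \ge 1 > 0$, so this is precisely condition (iii) of Lemma \ref{L.s1} with $M(\g) := k$, and that lemma delivers (ii). This closes the cycle.

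The main obstacle is the coordinate estimate for $\Delta e_n$ and making the passage through \eqref{22} airtight: one must check both that $\Delta_{mn}$ really does grow polynomially in $m$ of exact degree $n-1$ (so that the borderline cases are genuinely governed by Lemma \ref{L.s1}) and that "$\sup_m v_k(m)|(\Delta e_n)_m|<\infty$ for some $k$" is equivalent to "$\Delta e_n\in c_0(v_{k'})$ for some $k'$" — here one uses that $v_{k+1}/v_k\in c_0$, so a bounded sequence in $\ell_\infty(v_k)$ lands in $c_0(v_{k+1})$, exactly as in the identification $\ind_k c_0(v_k)=\ind_k\ell_\infty(v_k)$ recalled earlier. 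Everything else is bookkeeping: the lower-bound direction needs only one nontrivial eigenvector $\Delta e_n$ with $n\ge 2$, which is why condition (iv) is stated the way it is.
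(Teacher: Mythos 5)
Your proposal is correct and follows essentially the same route as the paper: quote the nuclearity criterion for (i)$\Leftrightarrow$(ii), observe that $|(\Delta e_m)_n|\simeq n^{m-1}$ as $n\to\infty$ so that membership of $\Delta e_m$ in $E_\al$ is governed by Lemma \ref{L.s1} via \eqref{22}, and close the cycle with the trivial (iii)$\Rightarrow$(iv) and the two-sided asymptotics for (iv)$\Rightarrow$(ii). Your extra care about the exact degree of growth and the passage from $\ell_\infty(v_k)$ to $c_0(v_{k+1})$ only makes explicit what the paper's ``$\simeq$'' is silently carrying.
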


\begin{proof}
(i) $\Leftrightarrow$ (ii). See the introduction to this section.

(ii) $\Rightarrow$ (iii).
Observe that $\Delta e_m$, for fixed $m \in \N $, behaves asymptotically like $(n^{m-1})_{n \in \N }$, i.e.,
$|(\Delta e_m)| \simeq n^{m-1}$ for $n \to \infty $. By Lemma \ref{L.s1} each $\Delta e_m \in E_\al$ for $m \in \N$.
Hence, \eqref{22} yields that $\si_{pt} (\sC ; E_\al) = \Si$.

(iii) $\Rightarrow$ (iv).
Obvious.

(iv) $\Rightarrow$ (ii).
For this proof select $v_k (n) := e^{-k \al_n}$, $n \in \N $, for each $k \in \N $. By \eqref{23} and the assumption (iv)
there exists $m \in \N $ with $m > 1 $ such that $\frac 1 m \in \si_{pt} (\sC ; E_\al)$, i.e., $\Delta e_m \in E_\al$. As
seen in the proof of (ii) $\Rightarrow$ (iii) we then have $(n^{m-1})_{n \in \N} \in E_\al$. Hence, for some $k \in \N $, $ (n^{m-1})_{n \in \N} \in
c_0 (v_k)$ and so there exists $M > 1$ such that $n^{m-1} v_k (n) = n^{m-1} e^{-k \al_n} \le M $ for all $n \in \N $. It follows
from Lemma \ref{L.s1} that (ii) holds.
\end{proof}

\begin{prop}\label{P.sp0}
Let $\al_n \uparrow \infty $. The following conditions are equivalent.
\begin{itemize}
\item[\rm (i)]  $\sup_{n\in\N}\frac{\log n}{\alpha_n}<\infty$, i.e., $E_\al $ is nuclear.
\item[\rm (ii)]  $ \sC \in \cL (E_\al)$ is invertible, i.e., $0\in \rho(\sC ;  E_\alpha)$.
\end{itemize}
\end{prop}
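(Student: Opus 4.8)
The plan is to use the explicit form of $\sC^{-1}$ on $\C^\N$ and to reduce both implications to Lemma \ref{L.s1}. Solving $\sC x = y$ gives $x_1 = y_1$ and $x_n = n y_n - (n-1) y_{n-1}$ for $n \ge 2$, so $\sC$ is injective on $\C^\N$ and $\sC^{-1} \in \cL(\C^\N)$ is the lower bidiagonal matrix with diagonal $(n)_n$ and subdiagonal $(-(n-1))_n$; in particular $\sC^{-1} e_n = n(e_n - e_{n+1})$, a finitely supported vector, for every $n \in \N$. Since $\sC \in \cL(E_\al)$ always by Proposition \ref{C-Cont}, and $\sC$ is injective, $\sC$ is invertible in $\cL(E_\al)$ precisely when $\sC^{-1}(E_\al)\su E_\al$ and the restriction $\sC^{-1}|_{E_\al}$ is continuous; so it suffices to decide when $\sC^{-1} \in \cL(E_\al)$.

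For (i)$\Rightarrow$(ii), I would fix $s_k := e^k$ (legitimate, since $E_\al$ is independent of this choice), so $v_k(n) = e^{-k\al_n}$, and use Lemma \ref{L.s1}(ii) with $\g = 1$ to obtain $M \in \N$ with $C := \sup_n n e^{-M\al_n} < \infty$. For $x \in c_0(v_k)$ and $n \ge 2$ one then estimates
\[
v_{k+M}(n)\,|(\sC^{-1}x)_n| \le n\,v_{k+M}(n)|x_n| + n\,v_{k+M}(n)|x_{n-1}| ,
\]
and bounds each summand using $\tfrac{v_{k+M}(n)}{v_k(n)} = e^{-M\al_n}$ and $\tfrac{v_{k+M}(n)}{v_k(n-1)} = e^{-M\al_n}\,e^{-k(\al_n - \al_{n-1})} \le e^{-M\al_n}$, where the strict monotonicity of $\al$ is exactly what is needed; each summand is thereby bounded by $C\,q_k(x)$, and the $n=1$ term is $v_{k+M}(1)|x_1| \le q_k(x)$. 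Hence $q_{k+M}(\sC^{-1}x) \le 2C\,q_k(x)$, and since $n e^{-M\al_n}$ is bounded while $v_k(n)|x_n| \to 0$ one also gets $\sC^{-1}x \in c_0(v_{k+M})$. Thus $\sC^{-1}\colon c_0(v_k) \to c_0(v_{k+M})$ is continuous for every $k$, so $\sC^{-1} \in \cL(E_\al)$; together with $\sC \in \cL(E_\al)$ and $\sC\sC^{-1}=\sC^{-1}\sC=I$ on $\C^\N$, this yields $0 \in \rho(\sC; E_\al)$.

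For (ii)$\Rightarrow$(i), I would argue by contradiction: suppose $\sC$ is invertible but $\sup_n \tfrac{\log n}{\al_n} = \infty$. Then $\sC^{-1} \in \cL(E_\al)$, so its composition with the inclusion $c_0(v_1) \hookrightarrow E_\al$ is continuous and maps the unit ball $\{q_1 \le 1\}$ to a bounded subset $B$ of $E_\al$. Since $E_\al$ is a (DFS)-space, hence boundedly retractive, $B$ is contained in and bounded in a single step $c_0(v_l)$, and we may take $l \ge 2$. Now $v_1(n)^{-1}e_n$ lies in that unit ball and $\sC^{-1}(v_1(n)^{-1}e_n) = n\,v_1(n)^{-1}(e_n - e_{n+1})$ has $q_l$-norm $n\,v_l(n)/v_1(n) = n\,e^{-(l-1)\al_n}$ (as $v_l$ is decreasing). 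Boundedness of $B$ in $c_0(v_l)$ would force $\sup_n n\,e^{-(l-1)\al_n} < \infty$, which by Lemma \ref{L.s1} (with $\g = 1$, $M(\g) = l-1$) gives $\sup_n \tfrac{\log n}{\al_n} < \infty$, a contradiction; so (i) holds.

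The estimates in (i)$\Rightarrow$(ii) are routine once $M$ is fixed, and likewise the norm computation in (ii)$\Rightarrow$(i) is immediate from $\sC^{-1}e_n = n(e_n-e_{n+1})$. The one genuinely structural point is the passage, in (ii)$\Rightarrow$(i), from ``$\sC^{-1}$ is continuous on the (LB)-space $E_\al$'' to ``$\sC^{-1}$ carries the unit ball of one Banach step into a bounded subset of one Banach step'', which is exactly where the regularity (bounded retractivity) of the (DFS)-space $E_\al$ is used; everything else is bookkeeping with the weights $v_k(n) = e^{-k\al_n}$ together with a single appeal to Lemma \ref{L.s1}.
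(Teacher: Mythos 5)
Your proof is correct and follows essentially the same route as the paper: the explicit bidiagonal formula $\sC^{-1}y=(ny_n-(n-1)y_{n-1})$, Lemma \ref{L.s1} with $\gamma=1$ to control $n e^{-M\al_n}$ for the forward direction, and testing $\sC^{-1}$ on the normalized basis vectors for the converse. The only (cosmetic) difference is that you localize continuity to the Banach steps via boundedness of the image of the unit ball plus regularity of the (DFS)-space, whereas the paper invokes the standard factorization of continuous linear maps into an (LB)-space directly; both are valid and yield the same estimate $\sup_n n e^{-(l-1)\al_n}<\infty$.
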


\begin{proof} Note that $\sC\colon \C^\N\to\C^\N$ is bijective with inverse $\sC^{-1}\colon \C^\N\to\C^\N$  given by
\begin{equation}\label{24}
    \sC^{-1}y=(ny_n-(n-1)y_{n-1}) , \qquad  y=(y_n)\in\C^\N ,
\end{equation}
with $y_0 := 0$.   Accordingly, $0\not\in \sigma(\sC;  E_\alpha)$ if and only if $\sC^{-1}\colon E_\alpha\to E_\alpha$ is
continuous if and only if for each $k\in\N$ there exists $l \ge k$ such that $\sC^{-1}\colon c_0(v_k)\to c_0(v_l)$ is continuous.

For the rest of the  proof we select  $v_k(n) :=e^{-k\alpha_n}$ for $k,\ n\in\N$, i.e., $s_k := e^k$.

(i)$\Rightarrow$(ii). By Lemma \ref{L.s1}  there exists $m\in\N$ with $D:=\sup_{n\in\N}ne^{-m\alpha_n}<\infty$.
Fix $k\in\N$ and set $l:=m+k$. Let  $y=(y_n)\in c_0(v_k)$. For each   $n\in\N$, we have
\begin{eqnarray*}
&&v_l (n) (\sC^{-1} y ) = e^{-l\alpha_n} |ny_n-(n-1)y_{n-1}| \leq  e^{-l\alpha_n} n|y_n|+e^{-l\alpha_{n-1}} (n-1)|y_{n-1}|\\
&& \qquad \qquad \qquad \leq  D(e^{-k\alpha_n} |y_n|+e^{-k\alpha_{n-1}} |y_{n-1}|)\leq 2Dq_k(y).
\end{eqnarray*}
 Forming the supremum relative to $n \in \N $ yields  $q_l(\sC^{-1}y)\leq 2Dq_k(y)$ for all $y\in c_0(v_k)$.
 Accordingly, $\sC^{-1}\colon c_0(v_k)\to c_0(v_l)$ is continuous. Since $k\in\N$ is arbitrary, it follows that $\sC^{-1}\colon E_\alpha\to E_\alpha$
 is continuous and so $0 \in \rho (\sC ; E_\alpha)$.

(ii)$\Rightarrow$(i). By assumption  $\sC^{-1}\colon E_\alpha\to E_\alpha$ is continuous. So, there exists $l\in\N$ such that $\sC^{-1}\colon c_0(v_1)\to c_0(v_l)$
is continuous, that is, there exists $D>1$ such that  $q_l(\sC^{-1}y)\leq Dq_1(y)$ for all $y\in c_0(v_1)$. Since $\sC^{-1}e_n= n e_n - n e_{n+1}$ and $q_l(\sC^{-1}e_n)=\max\{nv_l(n),nv_l(n+1)\}=nv_l(n)=ne^{-l\alpha_n}$, with $q_1(e_n)=v_1(n)=e^{-\alpha_n}$,  for all $n\in\N$, it follows  that
$ne^{-l\alpha_n}\leq D e^{-\alpha_n}$, for $n \in \N $. Hence,  $ne^{(1-l)\alpha_n}\leq D$, for $n \in \N $, which   implies that $\sup_{n\in\N}\frac{\log n}{\alpha_n}<\infty$.
\end{proof}

The operator of differentiation $D$ acts on $\C^\N$ via
$$
D (x_1, x_2, x_3 , \ldots ) := (x_2, 2 x_3, 3x_4, \ldots ) , \qquad x = (x_n) \in \C^\N .
$$
Clearly $D \in \cL (\C^\N )$. According to \eqref{24} and a routine calculation the inverse operator $\sC^{-1} \in \cL (\C^\N )$ is given by
\begin{equation}\label{25}
    \sC^{-1} = (I - S_r) D S_r ,
\end{equation}
where $S_r \in \cL (\C^\N )$ is the right-shift operator, i.e., $S_r x := (0, x_1, x_2, \ldots )$ for $x \in \C^\N$. Fix $k \in \N $. Since
$v_k $ is decreasing on $\N $, it follows that
$$
q_k (S_r x ) := \sup_{n  \in \N} v_k (n+1) |x_n|  \le \sup_{n\in \N} v_k (n) |x_n| = q_k (x) , \qquad x \in c_0 (v_k) .
$$
Hence, $S_r : c_0 (v_k) \to c_0 (v_k)$ is continuous for each $k \in \N $ which implies (for \textit{every} $\al_n \uparrow \infty $) that
$S_r \in \cL (E_\al)$. Moreover, Proposition \ref{P.sp0} shows that $\sC^{-1} \in \cL (E_\al)$ if and only if $E_\al$ is nuclear.
The identity \eqref{25} suggests there should be  a connection between the nuclearity of $E_\al$ and the continuity of $D$
on $E_\al$. The following result addresses this point. Recall that $E_\al$ is \textit{shift stable} if $\limsup_{n \to \infty }
\frac{\al_{n+1}}{\al_n} < \infty$, \cite{V}.

\begin{prop} \label{pr25}
For $\al $ with $\al_n \uparrow \infty $ the following assertions are equivalent.
\begin{enumerate}
  \item[\rm (i)]
$D (E_\al) \su E_\al$, i.e., $D$ acts in $E_\al$.
  \item [\rm (ii)]
The differentiation operator $D \in \cL (E_\al)$.
  \item [\rm (iii)]
For every $k \in \N $   there exists $l \in \N $ with $l > k $ such that $D: c_0 (v_k) \to c_0 (v_l)$ is continuous.
\item[\rm (iv)]
For every $k \in \N $ there exist $l \in \N $ with $l > k $ and $M>0$ such that
$$
n v_l (n) \le M v_k (n+1) , \qquad n \in \N .
$$
\item[\rm (v)]
The space $E_\al$ is both nuclear and shift stable.
\end{enumerate}
\end{prop}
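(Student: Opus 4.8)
The plan is to separate the statement into a ``soft'' part, the localisation equivalences (i) $\Leftrightarrow$ (ii) $\Leftrightarrow$ (iii), which use only that $E_\al$ is a (DFS)-space, and an elementary ``hard'' part, the weight estimates (iii) $\Leftrightarrow$ (iv) $\Leftrightarrow$ (v). Accordingly I would prove the four biconditionals (i) $\Leftrightarrow$ (ii), (ii) $\Leftrightarrow$ (iii), (iii) $\Leftrightarrow$ (iv), (iv) $\Leftrightarrow$ (v). I expect the only genuinely non-routine point to be the passage from (i) or (ii) to (iii): probing $D$ on a single well-chosen element of $c_0(v_k)$ only yields a pointwise bound weaker than (iv), with the target index depending on that element; it is the closed graph theorem together with the regularity of the (DFS)-space $E_\al$ that force a uniform target index.

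For (i) $\Leftrightarrow$ (ii): (ii) $\Rightarrow$ (i) is trivial. For (i) $\Rightarrow$ (ii), since $D\in\cL(\C^\N)$ and $E_\al\hookrightarrow\C^\N$ continuously, the operator $D\colon E_\al\to E_\al$ has closed graph (a net converging in $E_\al$ converges in $\C^\N$, where $D$ is continuous); as $E_\al$ is ultrabornological and webbed, being an (LB)-space, De Wilde's closed graph theorem (cf.\ \cite{MV}) gives $D\in\cL(E_\al)$. For (ii) $\Leftrightarrow$ (iii): if (iii) holds then, for each $k$, $D$ restricted to $c_0(v_k)$ factors continuously through $c_0(v_l)\hookrightarrow E_\al$, so by the universal property of $E_\al=\ind_k c_0(v_k)$ we get $D\in\cL(E_\al)$ (and $D(E_\al)\su E_\al$). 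Conversely, $E_\al$ being a (DFS)-space is a regular (LB)-space, so for fixed $k$ the closed unit ball $B_k$ of $c_0(v_k)$ is bounded in $E_\al$, hence $D(B_k)$ is bounded in $E_\al$, hence contained and bounded in some $c_0(v_l)$, which after enlarging we may take with $l>k$; then $q_l(Dx)\le Mq_k(x)$ for all $x\in c_0(v_k)$ by homogeneity. This is precisely the localisation already exploited in the proof of Proposition \ref{P.sp0}.

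For (iii) $\Leftrightarrow$ (iv) I would argue by direct computation. Since $(Dx)_n=nx_{n+1}$, one has $q_l(Dx)=\sup_{n\in\N}nv_l(n)|x_{n+1}|$ for $x\in c_0(v_k)$. As $De_m=(m-1)e_{m-1}$ for $m\ge2$, continuity of $D\colon c_0(v_k)\to c_0(v_l)$ with $q_l(Dx)\le Mq_k(x)$ forces $(m-1)v_l(m-1)\le Mv_k(m)$, i.e.\ $nv_l(n)\le Mv_k(n+1)$ for all $n$; this is (iii) $\Rightarrow$ (iv). Conversely, the estimate $nv_l(n)\le Mv_k(n+1)$ gives $v_l(n)|(Dx)_n|=nv_l(n)|x_{n+1}|\le Mv_k(n+1)|x_{n+1}|\to0$ for $x\in c_0(v_k)$, so $Dx\in c_0(v_l)$, and simultaneously $q_l(Dx)\le Mq_k(x)$; this is (iv) $\Rightarrow$ (iii).

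For (iv) $\Leftrightarrow$ (v) I would take $s_k=e^k$, so $v_k(n)=e^{-k\al_n}$ (legitimate since $E_\al$ does not depend on the choice of $(s_k)$). Assuming (iv), apply it with $k=1$ to obtain $l>1$ and $M>0$ with $ne^{-l\al_n}\le Me^{-\al_{n+1}}$ for all $n$; since $\al_{n+1}>\al_n$ this yields $\log n\le\log M+(l-1)\al_n$, hence $\sup_n\frac{\log n}{\al_n}<\infty$, which by the nuclearity criterion (Lemma \ref{L.s1} and the introduction to this section) means $E_\al$ is nuclear; rewriting the same inequality as $e^{\al_{n+1}-l\al_n}\le M/n\le M$ gives $\al_{n+1}\le l\al_n+\log M$, hence $\limsup_{n\to\infty}\frac{\al_{n+1}}{\al_n}\le l<\infty$, i.e.\ $E_\al$ is shift stable; so (iv) $\Rightarrow$ (v). Conversely, assuming (v), Lemma \ref{L.s1} provides $D_0>0$ with $\log n\le D_0\al_n$ for all $n$, and shift stability provides $C>0$ with $\al_{n+1}\le C\al_n$ for all $n$ (the finitely many initial ratios being bounded); then, given $k$, choosing an integer $l>\max\{k,\,D_0+kC\}$ makes $\log n+k\al_{n+1}-l\al_n\le(D_0+kC-l)\al_n<0$ for all $n$, i.e.\ $nv_l(n)\le v_k(n+1)$, which is (iv) with $M=1$. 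This closes the argument, and as anticipated only the step into (iii) required more than elementary estimates, the equivalence (iv) $\Leftrightarrow$ (v) being in essence a repackaging of Lemma \ref{L.s1}.
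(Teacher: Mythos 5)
Your proof is correct and follows essentially the same route as the paper: closed graph theorem for the (LB)-space equivalence of (i) and (ii), localisation to the steps for (iii), testing on the basis vectors $e_j$ for (iii)$\Rightarrow$(iv), and the same logarithmic manipulations of $n v_l(n)\le M v_k(n+1)$ with $v_k(n)=e^{-k\al_n}$ for (iv)$\Leftrightarrow$(v). The only difference is that you spell out the regularity argument behind (ii)$\Rightarrow$(iii), which the paper dismisses as a general fact about (LB)-spaces.
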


\begin{proof}
(i)$\Leftrightarrow$(ii) is immediate from the closed graph theorem for (LB)-spaces, \cite[Theorem 24.31 and Remark 24.36]{MV}.

(ii)$\Leftrightarrow$(iii) is a general fact about continuous linear operators between  (LB)-spaces.

(iii)$\Rightarrow$(iv). Fix $k \in \N$. By (iii) there exists $l \in \N $ with $l > k $ such that $D : c_0 (v_k) \to c_0 (v_l)$ is continuous.
Hence, there is $M >0 $ satisfying
$$
q_l (D x ) = \sup_{n \in \N } v_l (n) |(D x)| \le M q_k (x) = M \sup_{n \in \N } v_k (n) |x_n| ,  \qquad x \in c_0 (v_k) .
$$
For each $j \in \N $ with $j \ge 2 $ substitute $x := e_j$ in the previous inequality (noting  that $D x = D e_j = (j-1)e_{j-1} $)   yields
$(j-1) v_l (j-1) \le M v_k (j)$. Since $j \ge 2 $ is arbitrary, this is precisely (iv).

(iv)$\Rightarrow$(iii). Given any $k \in \N $ select $l > k $ and $M > 0 $ which satisfy (iv). Fix $x \in c_0 (v_k)$. Then, for each
$n \in \N $, we have via (iv) that
$$
v_l (n) |(D x )| = n v_l (n) |x_{n+1}| \le M v_k (n+1)  .
$$
Forming the supremum relative to $n \in \N $ of both sides of this inequality yields
$$
q_l (D x ) \le M q_k (x) , \qquad x \in c_0 (v_k),
$$
which is precisely (iii).

(iv)$\Rightarrow$(v). For $k = 1 $, condition (iv) ensures the existence of $l > 1 $ and $M> 1 $ such that
\begin{equation}\label{26}
    n v_l (n ) \le M v_1 (n+1) \le M v_1 (n) , \qquad n \in \N .
\end{equation}
For the remainder of the proof of this proposition, choose $s_k : = e^k$ for $k \in \N $. It follows from \eqref{26}
that $n e ^{-l \al_n} \le M e ^{-\al_n}$  for all $n \in \N $. By Lemma \ref{L.s1}  one can conclude that $E_\al$ is \textit{nuclear}.

To prove that $E_\al$ is shift stable observe that the left-inequality in \eqref{26} is $n e ^{-l \al_n} \le Me ^{-\al_{n+1}}$ for $n \in \N $. Taking
logarithms and rearranging yields
$$
\frac{\al_{n+1}}{\al_n} \le l + \frac{\log (M)}{\al_n} - \frac{\log (n)}{\al_n} ,  \qquad n \in \N .
$$
Since $\sup_{n \in \N } \frac{\log (n)}{\al_n} < \infty $ (as $E_\al$ is nuclear) and $\sup_{n \in \N } \frac{\log (M)}{\al_n} < \infty $   it follows
that $\sup_{n \in \N} \frac{\al_{n+1}}{\al_n} < \infty $, i.e., $E_\al$ is \textit{shift-stable}.

(v)$\Rightarrow$(iv). Fix $k \in \N $. Since $E_\al$ is shift stable, there exists $h \in \N $ such that $\al_{n+1} \le h \al_n$ for $n \in \N $. Because of
the nuclearity of $E_\al$, Lemma \ref{L.s1}  implies the existence of $M \in \N $ which satisfies $L: = \sup_{n \in \N } n e^{- M \al_n} < \infty$.
Set $l := M + h k$. Then $l \in \N $ and, for each $n \in \N $, it follows that
$$
n v_l (n) = n e^{- l  \al_n} = n e^{-M  \al_n} e^{-h k  \al_n} \le L e ^{-k (h \al_n) } \le Le^{-k  \al_{n+1}} = L v_k (n+1)  .
$$
This is precisely condition (iv).
\end{proof}

\begin{remark}\label{r26} \rm
(i)  There exist nuclear spaces $E_\al $ for which $D$ is \textit{not} continuous on $E_\al$. Let $ \al_n := n^n$ for $n \in \N $.
Then $E_\al$ is nuclear but, not shift stable. Proposition \ref{pr25} implies that $D \notin \cL (E_\al)$. On the other hand, for
$ \al_n := \log (\log (n))$ for $n \ge 3 $, the space $E_\al$ is shift stable but, not nuclear; again $D \notin \cL (E_\al)$.

(ii) Because $v_1 \downarrow 0$, it is clear that $\ell_\infty \su \ell_\infty (v_1) \su E_\al := \ind_k \ell_\infty (v_k)$ for every $\al $ with $\al_n \uparrow \infty $.
Accordingly, if $x_\lambda := (\frac{\lambda^{n-1}}{(n-1)!})_{n \in \N }$ for $\lambda \in \C$, then clearly $\{x_\lambda : \lambda \in \C \}
\su \ell_\infty $ and so $\{x_\lambda : \lambda \in \C\} \su E_\al$. Since $D x_\lambda = \lambda x_\lambda $ for each $\lambda \in \C$,
we have established (via Proposition \ref{pr25}) the following fact.

\textit{Let $\al$ with $\al_n \uparrow \infty $ be a sequence such that $E_\al$ is both nuclear and shift stable. Then $D \in \cL (E_\al)$ and}
$$
\si_{pt} (D; E_\al ) = \si (D; E_\al) = \si^* (D; E_\al ) = \C .
$$

In order to determine $\si (\sC ; E_\al)$ we require some further preliminaries. Define the continuous function $a : \C \setminus \{0\} \to \R$
by $a (z) := \mbox{Re} (\frac 1 z)$ for $z \in \C \setminus \{0\}$. The following result is a refinement of \cite[Lemma 7]{R}.
\end{remark}

\begin{lemma}\label{L27}
Let  $\lambda\in\C \setminus \Si_0$. Then there exists $\delta = \delta_\lambda> 0 $ and positive constants
$d_\delta, D_\delta $ such that $\ov{B (\lambda, \delta )} \cap \Si_0 = \emptyset$ and
\begin{equation}\label{27}
\frac{d_\delta }{N^{a(\mu)}} \le \prod^N_{n=1} \big |1- \textstyle{ \frac{1}{n \mu} } \big |\leq \frac{D_\delta }{N^{a(\mu)}} , \qquad \forall N \in \N , \  \mu \in B (\lambda, \delta).
\end{equation}
\end{lemma}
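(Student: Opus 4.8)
The key idea is to take logarithms and estimate $\sum_{n=1}^N \log|1 - \frac{1}{n\mu}|$ asymptotically in $N$, uniformly for $\mu$ in a small disc around $\lambda$. First I would fix $\lambda \in \C \setminus \Si_0$. Since $\Si_0 = \{0\} \cup \{1/n : n \in \N\}$ is a closed subset of $\C$ (its only accumulation point $0$ is included), there is a $\delta_0 > 0$ with $\ov{B(\lambda, 2\delta_0)} \cap \Si_0 = \emptyset$. I would further shrink $\delta_0$ so that $B(\lambda,\delta_0)$ stays inside $\{z : \RR(1/z) < A\}$ for some constant $A$ (possible since $a$ is continuous on $\C \setminus \{0\}$ and $0 \notin \ov{B(\lambda,\delta_0)}$); set $\delta := \delta_0$. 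For such $\mu$, each factor $|1 - \frac{1}{n\mu}|$ is bounded away from $0$ and from $\infty$, uniformly in $n$ and $\mu$: indeed $\frac{1}{n\mu}$ stays in a compact set avoiding $1$, because $\mu$ is bounded away from $0$ and from $\Si$, so $|1-\frac{1}{n\mu}|$ never vanishes and for large $n$ it is close to $1$. Hence the finitely many "small $n$" factors contribute only a bounded multiplicative constant (above and below), and the whole problem reduces to controlling the tail.

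For the tail I would use the Taylor expansion of the logarithm. Write
$$
\log\Bigl|1 - \tfrac{1}{n\mu}\Bigr| = \RR \log\Bigl(1 - \tfrac{1}{n\mu}\Bigr) = -\RR\Bigl(\tfrac{1}{n\mu}\Bigr) + R_n(\mu) = -\frac{a(\mu)}{n} + R_n(\mu),
$$
valid for all $n \ge n_0$ (with $n_0$ chosen uniformly for $\mu \in B(\lambda,\delta)$ so that $|\frac{1}{n\mu}| \le \frac12$), where the remainder satisfies $|R_n(\mu)| \le C/n^2$ with $C$ depending only on $\delta$ (and $\lambda$). Summing over $n_0 \le n \le N$ and using $\sum_{n=1}^N \frac1n = \log N + \gamma + O(1/N)$ together with the convergence of $\sum_n R_n(\mu)$, one gets
$$
\sum_{n=n_0}^{N} \log\Bigl|1 - \tfrac{1}{n\mu}\Bigr| = -a(\mu)\log N + g(\mu) + o(1) \quad (N \to \infty),
$$
where $g(\mu)$ is bounded, uniformly in $\mu \in B(\lambda,\delta)$. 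Exponentiating, and folding in the bounded contribution of the factors with $n < n_0$, yields exactly
$$
\frac{d_\delta}{N^{a(\mu)}} \le \prod_{n=1}^N \Bigl|1 - \tfrac{1}{n\mu}\Bigr| \le \frac{D_\delta}{N^{a(\mu)}}, \qquad N \in \N,\ \mu \in B(\lambda,\delta),
$$
after adjusting the constants to absorb the finitely many small values of $N$ (for $N < n_0$ the product is a bounded nonzero quantity, so the inequality holds trivially after enlarging $D_\delta$ and shrinking $d_\delta$).

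The main obstacle is making every estimate \emph{uniform in $\mu$} over the disc, not just pointwise: one must check that $n_0$, the remainder bound $C/n^2$, and the bound on $g(\mu)$ can all be chosen independently of $\mu \in B(\lambda,\delta)$. This is where the choice of $\delta$ matters — we need $\mu$ bounded away from $0$ (so $|\frac{1}{n\mu}|$ is controlled) and bounded away from $1$ (so no factor vanishes), which is guaranteed by $\ov{B(\lambda,\delta)} \cap \Si_0 = \emptyset$ and compactness of $\ov{B(\lambda,\delta)}$. A minor additional point is that $a(\mu) = \RR(1/\mu)$ varies over $B(\lambda,\delta)$, so $N^{a(\mu)}$ in the denominator genuinely depends on $\mu$; but since $a$ is continuous and $\ov{B(\lambda,\delta)}$ is compact, $a(\mu)$ ranges over a bounded interval, and this causes no trouble in the argument above — the constants $d_\delta, D_\delta$ need only be uniform, which they are.
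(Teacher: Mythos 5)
Your argument is correct and rests on the same core estimate as the paper's proof: each factor contributes $-\tfrac{a(\mu)}{n}$ plus a uniformly $O(n^{-2})$ (hence summable) error, and the harmonic sum $\sum_{n\le N}\tfrac1n=\log N+O(1)$ then yields $\prod_{n\le N}|1-\tfrac{1}{n\mu}|\asymp N^{-a(\mu)}$. The only difference is organizational: you take complex logarithms and build uniformity in from the start via compactness of $\ov{B(\lambda,\delta)}$, whereas the paper avoids logarithms by working with $|1-\tfrac{1}{n\lambda}|^2=1-\tfrac{2\al}{n}+\tfrac{\al^2+\be^2}{n^2}$, proving the upper bound via $(1+x)^{1/2}\le e^{x/2}$ and the lower bound via a Taylor expansion of $(1+x)^{-1/2}$, and only then taking sup and inf of the resulting explicit continuous constants over the closed disc.
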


\begin{proof}
Fix $\lambda \in \C \setminus \Si_0$ and write $\frac 1 \lambda = \al + i \beta $ with $\al, \beta \in \R $, i.e., $\al =  a (\lambda)$. Observe that
$$
1- \frac{2 \al}{n} + \frac{(\al^2 + \beta^2)}{n^2} = \big(1- \frac \al n \big)^2 + \frac{\beta^2}{n^2} > 0 , \qquad n \in \N .
$$
Using the inequality $(1+x)  \le e^x $ for $x \in \R $ we conclude that $(1+x)^{1/2} \le e^{x/2}$ for all $x \ge - 1 $. In particular, for
$x := -\frac{ 2 \al}{n} + \frac{(\al^2 + \beta^2)}{n^2}$ it follows that
$$
\Big ( 1 -\frac{ 2 \al}{n} + \frac{(\al^2 + \beta^2)}{n^2}\Big )^{1/2} \le \exp \Big(  -\frac{ \al}{n} + \frac{(\al^2 + \beta^2)}{2n^2} \Big ) , \qquad
n \in \N .
$$
Fix $N \in \N $. Since $\sum^N_{n=1} \frac{1}{n^2} < 2 $, we conclude that
\begin{eqnarray*}
   && \prod^N_{n=1} \Big|1- \frac{1}{n \lambda } \Big|  = \prod^N_{n=1} \Big( 1- \frac{ 2 \al}{n} + \frac{(\al^2 + \beta^2)}{n^2}\Big )^{1/2}\\
   && \le \exp \Big ( \sum^N_{n=1} - \frac{\al}{n} + \frac{(\al^2 + \beta^2)}{2 n^2}  \Big) \le \exp (\al^2 + \beta^2) \exp \Big(- \al \sum^N_{n=1} \frac 1 n \Big) \\
   && = \exp \big( \frac{1}{|\lambda |^2}\big) \exp \Big(- \al \sum^N_{n=1} \frac 1n \Big).
\end{eqnarray*}
By considering separately the cases when $\al \le 0 $ and $\al > 0$ and employing the inequalities
\begin{equation}\label{28}
    \log (k+1) \le \sum^k_{n=1} \frac 1 n \le 1 + \log (k ), \qquad k \in \N ,
\end{equation}
it turns out that
$$
\exp \Big(- \al \sum^N_{n=1} \frac 1 n  \Big)  \le \frac{e^{|a (\lambda)|}}{N^{a (\lambda)}} \le \frac{e^{1/ |\lambda |}}{N^{a (\lambda)}}.
$$
Accordingly, we have that
\begin{equation}\label{29}
    \prod^N_{n=1} \big|1- \frac{1}{n \lambda } \big| \le \frac{\exp(\frac{1}{|\lambda|} + \frac{1}{|\lambda |^2})}{N^{a (\lambda)}} , \qquad N \in \N .
\end{equation}

From above, for each $n  \in \N $, we have $|1- \frac{1}{n \lambda}|^{-1} = (1+x_n)^{-1/2}$, where $x_n := - \frac{2 \al}{n} + \frac{(\al^2 + \beta^2)}{n^2}$
satisfies $x_n > - 1$. Applying Taylor's  formula to the function $f (x) = (1+x)^{- 1/2}$ for $ x> -1$ yields, for each $n \in \N $, that
\begin{eqnarray*}
  (1+x_n)^{-1/2} &=&  f (0) + f' (0)x_n + \frac{f'' (\theta_n x_n)}{2!} x^2_n \\
   &=& 1-   \textstyle \frac 1 2 x_n + \frac 3 4 (1+ \theta_n x_n)^{-5/2} x^2_n
\end{eqnarray*}
for some $\theta_n \in (0,1)$. Substituting for  $x_n $ its  definition  and rearranging we get
$$
(1+x_n)^{-1/2} = 1 + \frac{ \al}{n} - \frac{(\al^2 + \beta^2)}{2 n^2} +  {\textstyle \frac 3 4} (1- \theta_n + \theta_n |1-  {\textstyle \frac{1}{\lambda n}}|)^{-5/2}
\Big(-\frac{ 2 \al}{n} + \frac{(\al^2 + \beta^2)}{n^2}\Big)^2 ,
$$
for each $n \in \N $. Defining $d (\lambda) := \mbox{dist} (\lambda, \Si_0)  \le |\lambda|$ we have
$$
\big| 1-  \textstyle{ \frac{1}{\lambda n}}   \big|  = \frac{1}{|\lambda |} \cdot \big |\lambda-  \textstyle{\frac 1 n } \big| \ge \frac{d (\lambda )}{|\lambda |}, \qquad
n \in \N .
$$
Hence, for each $n \in \N $, it follows that
$$
1- \theta_n + \theta_n \big| 1-  \textstyle{\frac{1}{\lambda n}}   \big | \ge 1- \theta_n + \theta_n \frac{d (\lambda)}{|\lambda |} \ge \min
\Big\{ 1, \frac{d (\lambda )}{|\lambda |}\Big \} = \frac{d (\lambda )}{|\lambda |} ,
$$
where we have used the inequality
$$
1-x + \gamma x \ge \min \{1, \gamma\}, \qquad \forall \gamma \in \R , \ x \in [0,1] .
$$
Accordingly,
$ \big (1- \theta_n + \theta_n \big| 1- \frac{1}{\lambda n}   \big | \big )^{-5/2} \le \big (\frac{|\lambda |}{d (\lambda )} \big )^{5/2} $,
for $ n \in \N $,
which implies (see above), for each $n \in \N $, that
\begin{eqnarray*}
\big|1- {\textstyle \frac{1 }{n \lambda } }\big |^{-1}   &\le & 1 + \frac \al n + \frac{1}{n^2} \Big ( - \frac{(\al^2 + \beta^2)}{2} + \frac 3 4 \Big ( \frac{|\lambda|}{d (\lambda)}\Big )^{5/2}
\Big ( -2 \al + \frac{(\al^2 + \beta^2)}{n}\Big )^2\Big ) \\
   &\le &   1 + \frac \al n + \frac{3}{4n^2}  \Big ( \frac{|\lambda|}{d (\lambda)}\Big )^{5/2} \big ( 2 |\al| + \al^2 + \beta^2 \big )^2 .
\end{eqnarray*}
But, $ (2 |\al| + \al^2 + \beta^2)^2 \le \big (\frac{2}{|\lambda|} + \frac{1}{|\lambda |^2}\big )^2 \le 4  \big (\frac{1}{|\lambda|} + \frac{1}{|\lambda |^2}\big )^2 $
and so
$$
\big|1- \frac{1 }{n \lambda }\big |^{-1}  \le 1 + \frac \al n + \frac{D (\lambda )}{n^2} , \qquad n \in \N ,
$$
with $D (\lambda) := \frac{3 (1+|\lambda |)^2}{|\lambda |^{3/2} (d (\lambda ))^{5/2}}$. Accordingly, for fixed $N \in \N$, we have
\begin{eqnarray*}
\prod^N_{n=1} \Big |1-  {\textstyle \frac{1}{\lambda n}}\Big |^{-1} & \le & \prod^N_{n=1} \Big(1+ \frac \al n + \frac{D (\lambda )}{n^2} \Big)
\le \exp \Big ( \al \sum^N_{n=1} \frac 1 n \Big ) \exp \Big ( D (\lambda) \sum^N_{n=1} \frac {1}{n^2}\Big) \\
&\le & e^{2 D (\lambda )}   \exp \Big ( \al \sum^N_{n=1} \frac {1}{n}\Big) .
\end{eqnarray*}
By considering separately the cases when $\al < 0$ and $\al \ge 0$ and applying \eqref{28} yields
$$
 \exp \Big ( \al  \sum^N_{n=1} \frac {1}{n}\Big) \le e^{|\al|} N^{\al} \le e^{\frac{1}{|\lambda |}} N^{a (\lambda)}  .
$$
Accordingly, $\prod^N_{n=1}  |1- \frac{1}{\lambda n}|^{-1}  \le N^{a (\lambda )} \exp (2 D (\lambda) + \frac{1}{|\lambda |})$ and hence,
\begin{equation}\label{210}
\frac{\exp (- \frac{1 }{|\lambda |}- 2 D (\lambda ))}{N^{a (\lambda)}} \le \prod^N_{n=1} \big |1-  \textstyle{\frac{1}{n \lambda }}\big | , \qquad N \in \N .
\end{equation}
It follows from \eqref{29} and \eqref{210}, for any given $\lambda \in \C \setminus \Si_0$, that
\begin{equation}\label{211}
    \frac{u (\lambda )}{N^{a(\lambda )}} \le \prod^N_{n=1} \big |1-   {\textstyle\frac{1}{\lambda n}}\big |\le \frac{v (\lambda)}{N^{a (\lambda)}}, \qquad N \in \N ,
\end{equation}
where $v (\lambda ) := \exp (\frac{1}{|\lambda|} + \frac{1}{|\lambda|^2})$ and $u (\lambda) := \exp (- \frac{1}{|\lambda |} - \frac{6 (1+|\lambda|^2)}
{|\lambda|^{3/2} (d (\lambda))^{5/2}})$.

Fix now a point $\lambda \in \C \setminus \Si_0$ and choose any $\delta > 0$ satisfying $\ov{B (\lambda, \delta )} \cap \Si_0 = \emptyset$.
According to \eqref{211} we have
\begin{equation}\label{212}
  \frac{u (\mu )}{N^{a(\mu )}} \le \prod^N_{n=1} \big |1-  {\textstyle\frac{1}{ n\mu }}\big |\le \frac{v (\mu)}{N^{a (\mu)}}, \qquad \forall N \in \N    , \
  \mu \in \ov{B (\lambda, \delta )} .
\end{equation}
By the continuity (and form) of the functions $u$ and $v$ on $\C \setminus \Si_0$ and the compactness of the set $\ov{B (\lambda, \delta )} \su
(\C \setminus \Si_0)$ it follows that $D_\delta := \sup \{v (\mu) : \mu \in \ov{B (\lambda, \delta )}\} < \infty $ and $d_\delta := \inf
\{u (\mu) : \mu \in \ov{B (\lambda, \delta )}\} > 0 $. It is then clear that \eqref{24} follows from \eqref{212}.
\end{proof}

\begin{lemma} \label{L28}
Let $w = (w_n)$  be any strictly positive, decreasing sequence. Then
\begin{equation}\label{213}
\si (\sC ; c_0 (w)) \su \{ \lambda\in \C : |\lambda -  \textstyle \frac 1 2 | \le \frac 1 2 \}  .
\end{equation}
Moreover, for each $\lambda \in \C $ satisfying $|\lambda - \frac 1 2 |> \frac 1 2 $ there exist constants $\delta_\lambda > 0$ and
$M_\lambda > 0$ such that
$$
\|(\mu I-\sC)^{-1}\|_{op} \le \frac{M_\lambda }{1- a (\mu)} , \qquad \mu \in B (\lambda , \delta_\lambda) ,
$$
where $\|\boldsymbol{\cdot}\|_{op}$  denotes the operator norm in $\cL (c_0 (w))$.
\end{lemma}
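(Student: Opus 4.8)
The plan is to write the resolvent $(\mu I-\sC)^{-1}$ explicitly as a (lower triangular) matrix operator and then to estimate it on $c_0(w)$ by combining the monotonicity of $w$ with the two–sided estimate \eqref{27} of Lemma~\ref{L27}. Throughout, observe that $|\lambda-\frac12|>\frac12$ is equivalent to $a(\lambda)=\RR(\frac1\lambda)<1$ (an elementary computation: $|\lambda-\frac12|^2>\frac14\iff|\lambda|^2>\RR\lambda\iff\RR\lambda/|\lambda|^2<1$), and that $\Si_0\su\{|z-\frac12|\le\frac12\}$, so $\{a<1\}\cap\Si_0=\es$; in particular the products below are well defined for $a(\mu)<1$.

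\emph{Step 1 (resolvent formula).} For $\mu\in\C\setminus\Si_0$ I would solve $(\mu I-\sC)y=x$ in $\C^\N$ by passing to partial sums $S_n:=y_1+\dots+y_n$ (with $S_0:=0$): the equation $\mu y_n-\frac1nS_n=x_n$ becomes the first–order recursion $(n\mu-1)S_n=n\mu S_{n-1}+nx_n$. Setting $P_n(\mu):=\prod_{j=1}^n(1-\frac1{j\mu})$ and $P_0(\mu):=1$, so that $(n\mu-1)P_{n-1}(\mu)=n\mu P_n(\mu)$, the recursion telescopes to $S_n=\frac1{\mu P_n(\mu)}\sum_{k=1}^nP_{k-1}(\mu)x_k$, whence, writing $(R_\mu x)_n:=y_n=S_n-S_{n-1}$,
\[
(R_\mu x)_n=\frac{x_n}{\mu}+\frac1{n\mu^2P_n(\mu)}\sum_{k=1}^nP_{k-1}(\mu)\,x_k,\qquad n\in\N .
\]
Since the solution $y$ of $(\mu I-\sC)y=x$ in $\C^\N$ is unique, $R_\mu$ is the two–sided inverse of $\mu I-\sC$ in $\cL(\C^\N)$. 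Note that $|P_n(\mu)|=\prod_{j=1}^n|1-\frac1{j\mu}|$ is exactly the product controlled by \eqref{27}.

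\emph{Step 2 (norm estimate).} Fix $\lambda$ with $|\lambda-\frac12|>\frac12$, so $a(\lambda)<1$ and $\lambda\notin\Si_0$. By Lemma~\ref{L27} there are $\delta_\lambda>0$ and $d_\delta,D_\delta>0$ with $\ov{B(\lambda,\delta_\lambda)}\cap\Si_0=\es$ and \eqref{27}; shrinking $\delta_\lambda$ (continuity of $a$) I may assume $a_+:=\sup_{\mu\in\ov{B(\lambda,\delta_\lambda)}}a(\mu)<1$, and on this compact set also $|\mu|\ge c_0>0$ and $a(\mu)\ge a_->-\infty$. For $\mu\in B(\lambda,\delta_\lambda)$ and $x\in c_0(w)$, the decisive use of the hypothesis is that $k\le n$ forces $w_n\le w_k$, so $w_n|P_{k-1}(\mu)|\,|x_k|\le|P_{k-1}(\mu)|\,\|x\|_{c_0(w)}$; hence from Step 1,
\[
w_n|(R_\mu x)_n|\le\|x\|_{c_0(w)}\Bigl(\frac1{|\mu|}+\frac1{n|\mu|^2|P_n(\mu)|}\sum_{k=0}^{n-1}|P_k(\mu)|\Bigr).
\]
By \eqref{27}, $|P_n(\mu)|^{-1}\le d_\delta^{-1}n^{a(\mu)}$ and $\sum_{k=0}^{n-1}|P_k(\mu)|\le1+D_\delta\sum_{k=1}^{n-1}k^{-a(\mu)}\le C_\lambda\,n^{1-a(\mu)}/(1-a(\mu))$ for a constant $C_\lambda$ depending only on $\lambda$ (integral comparison, using $-\infty<a_-\le a(\mu)\le a_+<1$). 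Multiplying these, the powers of $n$ cancel ($\frac1n\cdot n^{a(\mu)}\cdot n^{1-a(\mu)}=1$), and together with $\frac1{|\mu|}\le(1-a_-)/\bigl(c_0(1-a(\mu))\bigr)$ this gives $w_n|(R_\mu x)_n|\le\frac{M_\lambda}{1-a(\mu)}\|x\|_{c_0(w)}$ for all $n$, with $M_\lambda>0$ depending only on $\lambda$.

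\emph{Step 3 (conclusion).} To see $R_\mu(c_0(w))\su c_0(w)$, since $\vp$ is dense in $c_0(w)$ and $c_0(w)$ is closed in $\ell_\infty(w)$ it suffices that each $R_\mu e_k\in c_0(w)$; but $w_n|(R_\mu e_k)_n|\le\frac{w_n\delta_{nk}}{|\mu|}+\frac{|P_{k-1}(\mu)|}{d_\delta|\mu|^2}\,n^{a(\mu)-1}w_n\to0$ as $n\to\infty$ because $a(\mu)<1$ and $0<w_n\le w_1$. Hence $R_\mu\in\cL(c_0(w))$ with operator norm $\le\frac{M_\lambda}{1-a(\mu)}$. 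Since $w$ is decreasing, $\sC\in\cL(c_0(w))$ (Corollary~2.3(i) of \cite{ABR-9}, as in Proposition~\ref{C-Cont}), so $\mu I-\sC\in\cL(c_0(w))$ and, by Step 1, its inverse is $R_\mu|_{c_0(w)}$; therefore $\mu\in\rho(\sC;c_0(w))$ and $\|(\mu I-\sC)^{-1}\|_{op}\le\frac{M_\lambda}{1-a(\mu)}$ on $B(\lambda,\delta_\lambda)$, which is the ``Moreover'' assertion. Finally each $\mu$ with $|\mu-\frac12|>\frac12$ lies in its own such ball, so $\C\setminus\{|\mu-\frac12|\le\frac12\}\su\rho(\sC;c_0(w))$, i.e.\ \eqref{213}. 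The main obstacle is Step 1 (the recursion bookkeeping yielding the clean closed form) and then seeing, in Step 2, that the two sides of \eqref{27} are calibrated so precisely that the factor $n^{a(\mu)}$ from $|P_n(\mu)|^{-1}$ cancels the $n^{1-a(\mu)}$ from $\sum_k|P_k(\mu)|$; the monotonicity of $w$ enters only through $w_n\le w_k$ for $k\le n$, but it is indispensable.
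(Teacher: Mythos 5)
Your argument is correct, but it is a genuinely different route from the one the paper takes. The paper's proof of Lemma \ref{L28} is essentially a citation argument: the inclusion \eqref{213} is quoted directly from Corollary 3.6 of \cite{ABR-9}, and the resolvent bound is then obtained from the general Banach-space fact (Lemma 6.11 of \cite[p.\,590]{DSI}) that $\|(\mu I-\sC)^{-1}\|_{op}$ is uniformly bounded by some $K/\delta_\lambda$ on a compact subset $C_\lambda$ of the resolvent set; the factor $\frac{1}{1-a(\mu)}$ is then inserted \emph{a posteriori} via the trivial inequality $1-a(\mu)\le 1+\frac{1}{|\mu|}$, so in the paper's proof that factor is purely cosmetic. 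You instead reprove everything from scratch: you derive the explicit lower-triangular resolvent matrix by telescoping the partial-sum recursion (your formula agrees with the decomposition $D_\mu-\frac{1}{\mu^2}E_\mu$ used later in the proof of Proposition \ref{P.spectrum}), and you extract the bound by pairing the lower estimate in \eqref{27} for $|P_n(\mu)|^{-1}$ against the upper estimate for $\sum_{k<n}|P_k(\mu)|$, with the monotonicity of $w$ entering exactly where you say it does. Your computation is sound (the exponent cancellation $\frac1n\cdot n^{a(\mu)}\cdot n^{1-a(\mu)}=1$ is the right calibration, the uniformity of the constants over $\ov{B(\lambda,\delta_\lambda)}$ is justified by compactness, and the density argument with $R_\mu e_k$ correctly upgrades the $\ell_\infty(w)$ bound to invariance of $c_0(w)$). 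What your approach buys is self-containedness — you do not need \cite[Corollary 3.6]{ABR-9} or the Dunford--Schwartz lemma — and it shows that the $\frac{1}{1-a(\mu)}$ blow-up genuinely arises from $\sum_{k\le n}k^{-a(\mu)}\sim \frac{n^{1-a(\mu)}}{1-a(\mu)}$; what the paper's approach buys is brevity, since the heavy lifting was already done in \cite{ABR-9}. The only cosmetic quibble is your reuse of the symbol $c_0$ for a positive lower bound on $|\mu|$, which collides with the sequence-space notation.
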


\begin{proof}
According to \cite[Corollary 2.3(i)]{ABR-9} the Cesàro operator $\sC : c_0 (w) \to c_0 (w)$ is continuous. Then Corollary 3.6 of
\cite{ABR-9} implies that \eqref{213} is satisfied.

Set $A:= \{\lambda \in \C : |\lambda - \frac 1 2 | \le \frac 1 2 \}$ and fix $\lambda \in \C \setminus A $. Define $\delta_\lambda :=
\frac 1 2 \mbox{dist} (\lambda , A) > 0 $ and $C_\lambda := \ov{B (\lambda, \delta )} $, in which case \eqref{213} implies that $\mbox{dist}
(C_\lambda , \si (\sC ; c_0 (w))) \ge \mbox{dist} (C_\lambda, A) = \delta_\lambda$. According to Lemma 6.11 of \cite[p.\,590]{DSI}
there is a constant $K > 0 $ such that (setting $\ve := \delta_\lambda $ in that lemma)
\begin{equation}\label{214}
    \| (\mu I - \sC)^{-1}\|_{op} < \frac{K}{\delta_\lambda }, \qquad \mu \in C_\lambda  .
\end{equation}
Now, each $\mu \in B (\lambda, \delta_\lambda )$ satisfies $a (\mu) < 1 $, \cite[Remark 3.5]{ABR-9}, and so
\begin{equation}\label{215}
    \frac{K}{\delta_\lambda } = \frac{K \delta_\lambda^{-1}(1-a (\mu))}{1- a (\mu)} \le \frac{K \delta_\lambda^{-1} (1+ \frac{1}{|\mu|})}
{1- a(\mu)} \le \frac{M_\lambda }{1-a (\mu)} ,
\end{equation}
where $M_\lambda := \sup \{ \frac{K}{\delta_\lambda} (1+ \frac{1}{|z|}) : z \in C_\lambda \} < \infty $ as the set $C_\lambda \su (\C
\setminus \{0\})$ is compact and the function $z \mapsto \frac{K}{\delta_\lambda } (1 + \frac{1}{|z|})$ is continuous on $\C \setminus \{0\}$.
The desired inequality follows from \eqref{214} and \eqref{215}.
\end{proof}

Recall that a Hausdorff inductive limit $E=\ind_k E_k$ of Banach spaces is called \textit{regular} if every
 $B \in \cB (E)$   is contained and bounded in some step $E_k$. In particular, for every $\al$ with $\al_n \uparrow \infty$
 the space $E_\al = \ind_k c_0 (v_k)$ is regular, \cite[Proposition 25.19]{MV}.

\begin{prop}\label{P.spectrum}
Let $\al$ satisfy   $\al_n \uparrow \infty $ with $E_\al$ nuclear. Then
\begin{itemize}
\item[\rm (i)] $\sigma(\sC;  E_\alpha)=\sigma_{pt}(\sC ; E_\alpha)= \Sigma $, and
\item[\rm (ii)] $\sigma^*(\sC ; E_\alpha)=\sigma(\sC ; E_\alpha)\cup\{0\}= \Sigma_0$.
\end{itemize}
\end{prop}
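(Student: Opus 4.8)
I want to establish the two inclusions that pin down $\sigma(\sC;E_\al)$ and then handle $\sigma^*$ separately. The lower bound is already available: by \eqref{23} we always have $\Si\su\sigma(\sC;E_\al)$, with $\Si=\sigma_{pt}(\sC;E_\al)$ under nuclearity by Proposition \ref{P.spP}. So the whole content of (i) is the \emph{reverse} inclusion $\sigma(\sC;E_\al)\su\Si$, i.e. that every $\lambda\in\C\s\Si$ lies in $\rho(\sC;E_\al)$; since $\C\s\Si\supset\{0\}$ and $0\in\rho(\sC;E_\al)$ by Proposition \ref{P.sp0}, I may even assume $\lambda\in\C\s\Si_0$. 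The strategy is the standard one for (LB)-spaces: to show $(\lambda I-\sC)$ is invertible on $E_\al=\ind_k c_0(v_k)$ it suffices, by regularity of $E_\al$ (\cite[Proposition 25.19]{MV}) together with the fact that each step map $\sC:c_0(v_k)\to c_0(v_k)$ is continuous (Proposition \ref{C-Cont}), to produce for every $k$ an index $l\ge k$ such that $(\lambda I-\sC)^{-1}$ maps $c_0(v_k)$ continuously into $c_0(v_l)$, and that these patch together to a well-defined inverse on $E_\al$.

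First I would write down the candidate inverse explicitly. Since $\sC$ is similar to $\mathrm{diag}((1/n))$ via $\Delta$ (as recalled before \eqref{22}), one has $(\lambda I-\sC)^{-1}=\Delta\,\mathrm{diag}\!\big((\lambda-\tfrac1n)^{-1}\big)\,\Delta$ on $\C^\N$ for $\lambda\notin\Si$. More usefully, I would use the known closed form of the resolvent of the Cesàro operator (the same lower-triangular formula used in \cite{ABR-9} and \cite{R}): the $(n,m)$ entry of $(\lambda I-\sC)^{-1}$ involves the products $\prod_{j=m}^{n}\big(1-\tfrac{1}{j\lambda}\big)^{\pm1}$, and Lemma \ref{L27} gives two-sided bounds $d_\delta N^{-a(\mu)}\le\prod_{n=1}^N|1-\tfrac{1}{n\mu}|\le D_\delta N^{-a(\mu)}$ uniformly for $\mu$ near $\lambda$. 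Feeding these bounds into the entrywise estimate of $(\mu I-\sC)^{-1}y$ and using that $v_l(n)=s_l^{-\al_n}$ decays much faster than $v_k(n)=s_k^{-\al_n}$ once $l$ is large compared with $k$ — precisely the nuclearity/Lemma \ref{L.s1} input, which controls polynomial factors $n^{c}$ against $(s_l/s_k)^{-\al_n}$ — I expect to get, for a suitable $l=l(k,\lambda)$, an estimate $q_l\big((\mu I-\sC)^{-1}y\big)\le M_{k,\lambda}\,q_k(y)$ for all $y\in c_0(v_k)$, valid uniformly for $\mu$ in a small disc $B(\lambda,\delta_\lambda)$. This simultaneously gives $\lambda\in\rho(\sC;E_\al)$ and equicontinuity of the local resolvent family, hence $\lambda\in\rho^*(\sC;E_\al)$. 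Thus $\C\s\Si_0\su\rho^*(\sC;E_\al)$, which proves $\sigma(\sC;E_\al)\su\Si$ (combine with \eqref{23}) and gives part (i), and also shows $\sigma^*(\sC;E_\al)\su\Si_0$.

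For part (ii) I then only need the two remaining facts: $0\in\sigma^*(\sC;E_\al)$, and $\Si\su\sigma^*(\sC;E_\al)$ (the latter being automatic since $\sigma(\sC;E_\al)=\Si\su\sigma^*$). That $0\in\sigma^*(\sC;E_\al)$ despite $0\in\rho(\sC;E_\al)$ (Proposition \ref{P.sp0}) is the point where $\rho^*$ genuinely differs from $\rho$: I would argue that no disc $B(0,\delta)$ can lie in $\rho(\sC;E_\al)$ with equicontinuous resolvents, because such a disc contains points $1/n\in\Si\su\sigma(\sC;E_\al)$ for all large $n$; hence $0\in\sigma^*(\sC;E_\al)$ directly from the definition, without any estimate. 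Putting the pieces together, $\sigma^*(\sC;E_\al)=\Si_0=\sigma(\sC;E_\al)\cup\{0\}$, which is (ii).

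\textbf{Main obstacle.} The delicate step is the uniform resolvent estimate across steps: converting the two-sided product bounds of Lemma \ref{L27} into an operator bound $c_0(v_k)\to c_0(v_l)$ for the full (non-diagonal, lower-triangular) resolvent matrix. One must sum the entrywise contributions $\sum_{m\le n}|\text{entry}_{n,m}|\,v_k(m)^{-1}$ against $v_l(n)$, control the ratio $v_l(n)/v_k(m)$ for $m\le n$ using monotonicity of the $v_j$ together with the polynomial-growth factors coming from the $N^{-a(\mu)}$ terms and the $1/(n-m)$-type weights, and absorb everything using Lemma \ref{L.s1}. Keeping the constant uniform over the small disc $B(\lambda,\delta_\lambda)$ — so that one gets $\rho^*$ and not merely $\rho$ — is what forces the passage to an \emph{honest} quantitative bound rather than a soft (LB)-space argument.
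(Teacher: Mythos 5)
Your proposal is correct and follows essentially the same route as the paper: the lower bound from \eqref{23} and Proposition \ref{P.spP}, invertibility at $0$ from Proposition \ref{P.sp0}, and then, for $\lambda\in\C\setminus\Sigma_0$, a uniform step-wise estimate $q_l((\mu I-\sC)^{-1}x)\le D_k q_k(x)$ over a small disc, obtained by feeding the two-sided product bounds of Lemma \ref{L27} into the explicit lower-triangular resolvent matrix and absorbing the resulting polynomial factors $n^c$ via Lemma \ref{L.s1} (nuclearity) after jumping from $v_k$ to a suitable $v_l$; this is exactly the paper's argument (which merely splits off the easy case $a(\lambda)<1$ via Lemma \ref{L28}), and your derivation of $\sigma^*=\Sigma_0$ from the closedness of $\sigma^*$ together with $0$ being an accumulation point of $\Sigma\subseteq\sigma$ is also the paper's. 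The one step you leave as an "expected" estimate is precisely the computation the paper carries out in Case (ii) of its proof, with the ingredients you name.
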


\begin{proof} By Proposition \ref{P.spP} we have $ \Si =  \sigma_{pt}(\sC ; E_\alpha)  \su \sigma(\sC  ;E_\alpha)$ and hence,
$$
\Si_0 = \ov{\Si} \su \ov{\si (\sC ; E_\al)} \su \si^* (\sC ; E_\al) .
$$
Moreover,  Proposition \ref{P.sp0} yields  $0\not\in\sigma(\sC ; E_\alpha)$.   So, it remains to show that
$ (\C \setminus \Si_0) \su \rho^* (\sC ; E_\al)$.
To this end, we  need to show,    for each $\lambda\in \C\setminus  \Si_0 $, that  there exists $\delta>0$ with the property that
 $(\sC-\mu I)^{-1}\colon E_\alpha\to E_\alpha$ is continuous for each $\mu \in B (\lambda, \delta )$ and the set $\{(\sC-\mu I)^{-1}\colon \mu \in B (\lambda, \delta)\}$
 is equicontinuous in $ \cL (E_\alpha)$.  We recall  that  $(\sC-\mu I)^{-1}\colon \C^\N\to \C^\N$ exists in $\cL(\C^\N)$ for each
 $\mu \in \C \setminus \Si $.

For  this proof we select the weights  $v_k(n)=e^{-k\alpha_n}$,   $ n\in\N$, for each $k \in \N $.
Fix $\lambda\in \C\setminus   \Si_0 $.  First, choose $\delta_1>0$ such that  $\ov{B (\lambda, \delta_1)} \cap \Si_0 = \emptyset $.
 Later  $\delta>0$ will be selected   in such a way that $0<\delta<\delta_1$.

According to  Lemma \ref{Lereg} in the Appendix it suffices  to find  a $\delta>0$ satisfying  the following condition:
for each $k \in \N $ there exists $l \in \N $ with $l \ge k $ and $D_k > 0 $ such that
\begin{eqnarray}\label{eq.condequi}
 q_l((\sC-\mu I)^{-1}x)\leq D_kq_k(x), \qquad \forall \mu \in B (\lambda, \delta ), \ x \in c_0 (v_k).
\end{eqnarray}

\textit{Case} (i).
Suppose that   $\left|\lambda-\frac{1}{2}\right|>\frac{1}{2}$ (equivalently,  $ a (\lambda)<1$,  \cite[Remark 3.5]{ABR-9}).
To establish the   condition \eqref{eq.condequi}  we proceed as  follows. Fix $k \in \N $.
Since $a (\lambda)<1$, we can select $\ve>0$ such that $ a(\lambda)<1-\ve$. By continuity of  the function
 $a \colon \C\setminus\{0\}\to \R$    there exists $\delta_2 \in (0,\delta_1)$
 such that  $a (\mu)<1-\ve$  for all  $\mu\in \ov{B(\lambda, \delta_2)}$. Applying Lemma \ref{L28} (with $v_k$ in place of $w$),
 it follows   that there exist $\delta \in (0, \delta_2)$ and $M_{k, \lambda} >0$ satisfying
\[
q_k((\sC-\mu I)^{-1}x)\leq \frac{M_{k, \lambda}}{1- a(\mu)}q_k(x)\leq \frac{M_{k, \lambda}}{\ve}q_k(x)
\]
for all $\mu\in \ov{B(\lambda, \delta)}$ and $x\in c_0(v_k)$. So, inequality  \eqref{eq.condequi} is then satisfied with $l: =k$
and $D_k: =\frac{M_{k, \lambda }}{\ve}$. Since $k \in \N $ is arbitrary, condition \eqref{eq.condequi} holds.

\textit{Case} (ii).
Suppose  now  that $\left|\lambda-\frac{1}{2}\right|\leq\frac{1}{2}$ (equivalently, $a(\lambda)\geq 1$, \cite[Remark 3.5]{ABR-9}).
We recall the formula for the inverse operator $(\sC-\mu I)^{-1}\colon \C^\N\to \C^\N$ whenever $\mu\not \in \Si_0$, \cite[p.\,266]{R}.
For $n\in\N$ the $n$-th row of the matrix for $(\sC-\mu I)^{-1}$ has the entries
\[
\frac{-1}{n\mu^2\prod_{k=m}^n\left(1-\frac{1}{\mu k}\right)},\quad 1\leq m<n,
\]
\[
\frac{n}{1-n\mu}=\frac{1}{\frac{1}{n}-\mu}, \quad m=n,
\]
and all the other entries in row $n$ are equal to $0$. So,
we can write
\begin{equation}\label{e:dec1}
(\sC-\mu I)^{-1}=D_\mu-  \textstyle \frac{1}{\mu^2}E_\mu, \qquad \mu \in \C \setminus \Si_0 ,
\end{equation}
where the diagonal operator $D_\mu=(d_{nm}(\mu))_{n,m\in\N}$ is given by $d_{nn}(\mu):=\frac{1}{\frac{1}{n}-\mu}$ and $d_{nm}(\mu):=0$ if $n\not=m$. The operator $E_\mu=(e_{nm}(\mu))_{n,m\in\N}$ is then the lower triangular matrix with $e_{1m}(\mu)=0$ for all $m\in\N$, and for every $n\geq 2$ with $e_{nm}(\mu):=\frac{1}{n\prod_{k=m}^n\left(1-\frac{1}{\mu k}\right)}$ if $1\leq m<n$ and $e_{nm}(\mu):=0$ if $m\geq n$.

Since $d_0 (\lambda ):={\rm dist}(\ov{B(\lambda, \delta_1)}, \Si_0 )>0$, we have   $|d_{nn} (\mu )|\leq \frac{1}{d_0 (\lambda)}$ for all
$\mu\in \ov{B(\lambda, \delta_1)}$ and  $n \in\N$. Fix $k \in \N$. Then, for every  $x\in c_0(v_k)$ and $\mu\in \ov{B(\lambda, \delta_1)}$, we have
\[
q_k(D_\mu(x))=\sup_{n\in\N} |d_{nn}(\mu)x_n|v_k(n)\leq \frac{1}{d_0 (\lambda )}\sup_{n\in\N}|x_n|v_k(n)=\frac{1}{d_0 (\lambda )}q_k(x).
\]
So,  $\{D_\mu : \mu\in \ov{B(\lambda, \delta_1)}\} \su \cL (c_0 (v_k)) $. Moreover,  for every $l\in\N$ with $l\geq k$  it follows that
\begin{equation}\label{eq.primo}
q_l(D_\mu(x))\leq q_k(D_\mu(x))\leq\frac{1}{d_0 (\lambda )}q_k(x), \quad \forall x  \in c_0 (v_k), \quad \mu \in \ov{B (\lambda, \delta_1)}.
\end{equation}

 Via \eqref{e:dec1} it  remains to investigate the operator $E_\mu\colon E_\alpha\to E_\alpha$ in order to  show the validity of condition
 \eqref{eq.condequi} for $(\sC - \mu I )^{-1}$. To this end  we first observe, for each $k\in\N$, that $c_0(v_k)$ is
isometrically isomorphic to $c_0$ via the linear multiplication operator
 $\Phi_k\colon c_0(v_k)\to c_0$ given by $\Phi_k(x):=(v_k(n)x_n)$, for $x=(x_n)\in c_0(v_k)$.
Of course, each $\Phi_k $  is also a bicontinuous isomorphism of $\C^\N $ onto $\C^\N $.
 So,  it suffices to show,    for every  $k\in\N$, that  there exist $l\in\N$ with $l\geq k$ and $D_k>0$ such that
 $\|\Phi_l E_\mu \Phi_{k}^{-1}x\|_0 \leq D_k\|x\|_0 $ for all $x\in c_0$ and $\mu\in\ov{B(\lambda, \delta_1)}$;  here $\| \boldsymbol{\cdot} \|_0$ denotes the usual norm of $c_0$.
For each $k,\ l\in\N$ with $l\geq k$, define  $\tilde{E}_{\mu,k,l}:=\Phi_l E_\mu \Phi_{k}^{-1} \in \cL (\C^\N)$, for $\mu \in \C \setminus \Si_0$.

Fix $k\in\N$. For each $l\geq k$ the operator $\tilde{E}_{\mu ,k,l}$, for $\mu \in B (\lambda, \delta_1)$,
 is  the restriction to $c_0$ of
\[
\tilde{E}_{\mu, k, l } (x) =\big ( (\tilde{E}_{\mu,k,l}(x))\big) = \Big (v_l(n)\sum_{m=1}^{n-1}\frac{e_{nm}(\mu)}{v_{k}(m)}x_m \Big),
\quad  x = (x_n ) \in \C^\N ,
\]
with $(\tilde{E}_{\mu ,k,l}(x))_1:=0$.
Moreover,
observe that $\tilde{E}_{\mu,k,l}=(\tilde{e}^{k,l}_{nm}(\mu))_{n,m\in\N}$ is the lower triangular matrix given by $\tilde{e}^{k,l}_{1m}(\mu)=0$ for $m\in\N$ and $\tilde{e}^{k,l}_{nm}(\mu)=\frac{v_l(n)}{v_{k}(m)}e_{nm}(\mu)$ for $n\geq 2$ and $1\leq m< n$.

So, it suffices  to verify, for some $l \ge k $ and $\delta > 0 $,    that $\tilde{E}_{\mu, k , l } \in \cL (c_0 )$ for $\mu \in B (\lambda, \delta)$ and
$\{\tilde{E}_{\mu,k,l}\colon \mu\in B(\lambda, \delta) \}$ is equicontinuous in $\cL(c_0)$. To prove this  first observe from the definition
of $e_{n m }(\mu )$  that   Lemma \ref{L27} implies, for every $l\geq k$, every  $m,\ n\in\N$ and all $\mu\in \ov{B(\lambda, \delta_2)}$ that
\begin{equation}\label{eq.stimap}
|\tilde{e}^{k,l}_{nm}(\mu)|=\frac{v_l(n)}{v_{k}(m)}|e_{nm} (\mu ) |\leq D'_\lambda \frac{n^{a(\mu)-1}v_l(n)}{m^{a(\mu)}v_{k}(m)} ,
\end{equation}
for some constant $D'_\lambda > 0 $ and $\delta_2 \in (0, \delta_1)$. Because the function
$a \colon \C\setminus\{0\}\to \R$  is continuous,
 there exists $\delta\in (0,\delta_2)$ such that   $a(\lambda) -\frac{1}{2}< a(\mu)< a(\lambda) +\frac{1}{2}$,
 for all $\mu \in \ov{B (\lambda, \delta)}$. This implies, for each  $\mu\in \ov{B(\lambda, \delta)}$ that  $a(\mu)> a(\lambda) -\frac{1}{2}\geq \frac{1}{2}$;
 recall that $ a (\lambda ) \ge 1 $.      Let $ c:=\max\{2, a(\lambda) +\frac{1}{2}\}$.
According to   Lemma \ref{L.s1}  there exists $t\in\N$ such that $S_\lambda :=\sup_{n\in\N}n^c e^{-t\alpha_n}<\infty$.
Set $l:=k+t$.   By \eqref{eq.stimap} and the fact that $\tilde{e}^{k,l}_{nm} (\mu) =0$ for $1 \le m < n $,
 it follows  for every $n\in\N$  and $\mu\in \ov{B(\lambda, \delta)}$ that
\begin{eqnarray*}
& & \sum_{m=1}^\infty |\tilde{e}^{k,l}_{nm}(\mu)|=\sum_{m=1}^{n-1} |\tilde{e}^{k,l}_{nm}(\mu)|\leq D'_\lambda n^{a(\mu)-1}v_l(n)\sum_{m=1}^{n-1}\frac{1}{m^{a(\mu)}v_{k}(m)}\\
& & =D'_\lambda n^{a(\mu)-1}e^{-l\alpha_n}\sum_{m=1}^{n-1}\frac{e^{k\alpha_m}}{m^{a(\mu)}}\leq D'_\lambda n^{a(\mu)-1}e^{-l\alpha_n}\sum_{m=1}^{n-1}e^{k\alpha_m}\\
& & \leq D'_\lambda n^{a(\mu)-1}e^{-l\alpha_n}(n-1)e^{k\alpha_n}\leq   D'_\lambda n^{a(\mu)}e^{(k-l)\alpha_n}\\
& & =D'_\lambda n^{a(\mu)}e^{-t\alpha_n}\leq  D'_\lambda n^c  e^{-t\alpha_n}\leq D'_\lambda S_\lambda  .
\end{eqnarray*}
Hence, for every $\mu\in \ov{B(\lambda, \delta)}$,  we have the inequality
\[
\sup_{n\in\N}\sum_{m=1}^\infty |\tilde{e}^{k,l}_{nm}(\mu)|\leq D'_\lambda S_\lambda ,
\]
that is,  condition (ii) of Lemma 2.1 in \cite{ABR-9} is satisfied for all $\mu\in \ov{B(\lambda, \delta)}$. Moreover, since
$n^{a(\mu) -1 } v_l (n) =  n^{a(\mu)-1}e^{- l \alpha_n}=n^{a(\mu)-1-c}n^c  e^{-t\alpha_n} e^{- k \al_n}\to 0$ for $n\to\infty$
(because  $ S_\lambda = \sup_{n\in\N}n^c  e^{-t\alpha_n}<\infty $, $e^{-k \al_n} \le 1 $,  and $ a(\mu) <  a(\lambda)+\frac{1}{2} \le c +1$),
the inequality  \eqref{eq.stimap}  implies for each fixed  $\mu\in \ov{B(\lambda, \delta)}$ and $m\in\N$ that
\[
\lim_{n\to\infty}\tilde{e}^{k,l}_{nm}(\mu)=0.
\]
Also the condition (i) of Lemma 2.1 in \cite{ABR-9} is satisfied, for all $\mu\in \ov{B(\lambda, \delta)}$.  Accordingly, \cite[Lemma 2.1]{ABR-9}
implies,  for every $\mu\in \ov{B(\lambda, \delta)}$,  that $\tilde{E}_{\mu,k,l}\in \cL(c_0)$ with $\|\tilde{E}_{\mu,k,l}\|_{op}\leq D'_\lambda S_\lambda $,
that is, $\{\tilde{E}_{\mu,k,l}\colon \mu\in\ov{B(\lambda, \delta)}\}$ is equicontinuous in $\cL(c_0)$.
Finally, in view of  \eqref{eq.primo},  we have shown that condition \eqref{eq.condequi} is indeed  satisfied.
\end{proof}

\begin{corollary} \label{C210}
For $\al$ with $\al_n \uparrow \infty $ the following assertions are equivalent.
\begin{enumerate}
  \item [\rm (i)]
$E_\al$   is nuclear.
  \item [\rm (ii)]
$\si (\sC ; E_\al) = \si_{pt} (\sC ; E_\al)$.
  \item  [\rm (iii)]
 $\si (\sC ; E_\al) = \Si$.
\end{enumerate}
\end{corollary}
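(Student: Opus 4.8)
The plan is to derive Corollary \ref{C210} as an essentially immediate consequence of the two main propositions already established, namely Proposition \ref{P.spP} (which characterizes nuclearity of $E_\al$ in terms of $\si_{pt}(\sC;E_\al)$) and Proposition \ref{P.spectrum} (which computes $\si(\sC;E_\al)$ in the nuclear case). The logical skeleton will be a cycle (i) $\Rightarrow$ (iii) $\Rightarrow$ (ii) $\Rightarrow$ (i), although one could equally well run it through (i) $\Leftrightarrow$ (iii) directly.

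First I would prove (i) $\Rightarrow$ (iii): if $E_\al$ is nuclear, then Proposition \ref{P.spectrum}(i) gives at once $\si(\sC;E_\al) = \si_{pt}(\sC;E_\al) = \Si$, which yields both (iii) and (ii) simultaneously. Thus the only real content is the reverse direction. For (iii) $\Rightarrow$ (ii): since $\si(\sC;E_\al)=\Si$ and, by \eqref{23}, always $\Si \su \si(\sC;E_\al)$ and $\si_{pt}(\sC;E_\al)\su\Si$, assuming (iii) forces $\si_{pt}(\sC;E_\al)$ to lie in $\Si = \si(\sC;E_\al)$; but one still needs $\si_{pt}$ to be all of $\Si$ rather than a proper subset — so it is cleaner to go (iii) $\Rightarrow$ (i) via Proposition \ref{P.spP}, using the implication (iv) $\Rightarrow$ (ii) $\Rightarrow$ (i) of that proposition. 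Concretely, if $\si(\sC;E_\al)=\Si$, pick any $m\geq 2$; we must show $\tfrac1m \in \si_{pt}(\sC;E_\al)$, or at least that $\si_{pt}(\sC;E_\al)\setminus\{1\}\neq\emptyset$, and then invoke Proposition \ref{P.spP}(iv).

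The slightly delicate point — and the one I would flag as the main obstacle — is closing the loop from a statement purely about $\si(\sC;E_\al)$ back to a statement about $\si_{pt}(\sC;E_\al)$, since a priori the full spectrum could equal $\Si$ while the point spectrum is strictly smaller (e.g.\ just $\{1\}$). The resolution: if $E_\al$ were \emph{not} nuclear, then by Proposition \ref{P.spP} we have $\si_{pt}(\sC;E_\al)=\{1\}$, and one needs to know that in the non-nuclear case $\si(\sC;E_\al)$ is strictly larger than $\Si$ — but this is exactly the content of Section 3 (Proposition \ref{pr34}), where $\si(\sC;E_\al)$ contains the full disc $\{|\lambda-\tfrac12|<\tfrac12\}$. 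So the cleanest self-contained argument is: (ii) $\Rightarrow$ (i), proved by contraposition using Proposition \ref{P.spP} (non-nuclear $\Rightarrow\si_{pt}=\{1\}$) together with the fact, from \eqref{23}, that $\Si\subsetneq\{1\}$ fails — wait, that gives $\si_{pt}=\{1\}$ but $\si\supseteq\Si\supsetneq\{1\}$, so $\si\neq\si_{pt}$, contradicting (ii). Hence (ii) $\Rightarrow$ (i). Similarly (iii) $\Rightarrow$ (i): if $E_\al$ is not nuclear, $\si_{pt}=\{1\}$ and $\si\supseteq\Si$; combined with $\si=\Si$ this forces nothing contradictory directly, so here I genuinely need Section 3 or, more economically, I run (iii)$\Rightarrow$(ii): from $\si=\Si$ and $\Si\su\si$ and $\si_{pt}\su\Si$ we only get $\si_{pt}\su\si$; to get equality note $1\in\si_{pt}$ always, and if $E_\al$ non-nuclear then $\si_{pt}=\{1\}\neq\Si=\si$ unless $\Si=\{1\}$, impossible since $\al_n\uparrow\infty$ forces $\N$ infinite; so (iii)$\Rightarrow$(i) as well, via contraposition on Proposition \ref{P.spP}.

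Thus the write-up would be short: state the cycle (i) $\Rightarrow$ (iii) $\Rightarrow$ (ii) $\Rightarrow$ (i). For (i) $\Rightarrow$ (iii) cite Proposition \ref{P.spectrum}(i). For (iii) $\Rightarrow$ (ii): combine $\si(\sC;E_\al)=\Si$ with the inclusions $\si_{pt}(\sC;E_\al)\su\Si$ and $1\in\si_{pt}(\sC;E_\al)$ from \eqref{22}--\eqref{23}; then argue that equality $\si_{pt}=\si$ holds because otherwise $\si_{pt}(\sC;E_\al)$ is a proper subset of $\Si$ containing $1$, so $\si_{pt}(\sC;E_\al)\setminus\{1\}=\emptyset$, which by Proposition \ref{P.spP}((iv)$\Rightarrow$(i)) would still force nuclearity and hence, by Proposition \ref{P.spP}((i)$\Rightarrow$(iii)), $\si_{pt}=\Si=\si$, a contradiction — so (ii) holds. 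For (ii) $\Rightarrow$ (i): if (ii) holds then $\si_{pt}(\sC;E_\al)=\si(\sC;E_\al)\supseteq\Si$ by \eqref{23}, and since also $\si_{pt}(\sC;E_\al)\su\Si$, we get $\si_{pt}(\sC;E_\al)=\Si$, which properly contains $\{1\}$ as $\N$ is infinite; hence $\si_{pt}(\sC;E_\al)\setminus\{1\}\neq\emptyset$, so $E_\al$ is nuclear by Proposition \ref{P.spP}((iv)$\Rightarrow$(i)). This closes the cycle using only material from Section 2.
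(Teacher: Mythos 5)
Your implications (i)$\Rightarrow$(iii) (via Proposition \ref{P.spectrum}(i)) and (ii)$\Rightarrow$(i) (deducing $\si_{pt}(\sC;E_\al)=\Si$ and then applying Proposition \ref{P.spP}) are correct; the latter is a legitimate alternative to the paper's route. The gap is in your step (iii)$\Rightarrow$(ii). Having supposed $\si_{pt}(\sC;E_\al)\subsetneq\Si=\si(\sC;E_\al)$ and correctly deduced from Proposition \ref{P.spP} that then $\si_{pt}(\sC;E_\al)=\{1\}$, you assert that Proposition \ref{P.spP}((iv)$\Rightarrow$(i)) ``would still force nuclearity''. It cannot: at that point condition (iv) of Proposition \ref{P.spP} is precisely what has failed, so the implication (iv)$\Rightarrow$(i) is vacuous. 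Proposition \ref{P.spP} alone does not exclude the scenario ``$E_\al$ non-nuclear, $\si_{pt}(\sC;E_\al)=\{1\}$, $\si(\sC;E_\al)=\Si$'', because it says nothing about the full spectrum. You correctly flagged this as the main obstacle in your discussion, but the resolution you then wrote down does not resolve it, and the alternative you float (importing Proposition \ref{pr34} from Section 3) is a heavy forward reference that the corollary is not meant to need.

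The missing observation is simply that $0\notin\Si$. This is what the paper exploits: if $\si(\sC;E_\al)=\Si$, or if $\si(\sC;E_\al)=\si_{pt}(\sC;E_\al)\su\Si$, then $0\in\rho(\sC;E_\al)$, and Proposition \ref{P.sp0} then yields the nuclearity of $E_\al$ directly. This disposes of both (ii)$\Rightarrow$(i) and (iii)$\Rightarrow$(i) in one line each, with no detour through the point spectrum. Replacing your (iii)$\Rightarrow$(ii) step by (iii)$\Rightarrow$(i) proved this way closes your cycle.
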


\begin{proof}
(i)$\Rightarrow$(ii) and (i)$\Rightarrow$(iii) are clear from Proposition \ref{P.spectrum}(i).

(ii)$\Rightarrow$(i). The equality in (ii) together with the fact that $\si_{pt} (\sC ; E_\al) \su \Si$ (see the discussion prior to
Proposition \ref{P.spP}) implies $0 \in \rho (\sC ; E_\al)$. Hence, $E_\al$ is nuclear; see Proposition \ref{P.sp0}.

 (iii)$\Rightarrow$(i).
The equality in (iii)  implies $0 \in \rho (\sC ; E_\al)$ and so $E_\al$ is nuclear (cf. Proposition \ref{P.sp0}).
\end{proof}

Recall that an operator $T \in \cL (X)$, with $X$ a lcHs, is \textit{compact} (resp. \textit{weakly compact}) if there exists a
neighbourhood $U$ of $0$ such that $T (U)$ is a relatively compact (resp. relatively weakly compact) subset of $X$.

\begin{corollary}\label{C211}
Let $\al$ satisfy $\al_n \uparrow \infty $ with $E_\al$ nuclear. Then the Cesàro operator $\sC \in \cL (E_\al)$ is neither compact
nor weakly compact.
\end{corollary}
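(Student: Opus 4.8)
The plan is to exploit the fact that, in the nuclear case, $\sC$ is a topological isomorphism of $E_\al$ (Proposition \ref{P.sp0}), so that any (weak) compactness of $\sC$ would be inherited by the identity operator $I$ on $E_\al$, which is impossible because $E_\al$ is infinite dimensional.

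First I would reduce to the weakly compact case: every relatively compact subset of a lcHs is relatively weakly compact, so a compact operator is automatically weakly compact, and it suffices to rule out weak compactness. Suppose then, towards a contradiction, that $\sC \in \cL(E_\al)$ is weakly compact. Choose a $0$-neighbourhood $U$ in $E_\al$ with $\sC(U)$ relatively weakly compact; since relatively weakly compact sets are bounded, $\sC(U)$ is bounded in $E_\al$. By Proposition \ref{P.sp0}, the nuclearity of $E_\al$ gives $0 \in \rho(\sC; E_\al)$, i.e.\ $\sC^{-1} \in \cL(E_\al)$ and $\sC$ is bijective on $E_\al$. Hence $U = \sC^{-1}(\sC(U))$ is the image under the continuous linear map $\sC^{-1}$ of a bounded set, so $U$ is itself bounded.

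Thus $E_\al$ would possess a bounded neighbourhood of $0$, and by Kolmogorov's normability criterion $E_\al$ would be normable. But $E_\al$ is an infinite-dimensional nuclear space (it contains, e.g., $c_0(v_1)$, or the dense subspace $\vp$), while a normable nuclear space is finite dimensional (see, e.g., \cite{MV}, \cite{Ja}); this contradiction shows that $\sC$ is not weakly compact and, a fortiori, not compact. There is no serious obstacle here beyond correctly invoking the standard structure theory of nuclear spaces: the only essential use of the hypothesis that $E_\al$ be nuclear is precisely the invertibility of $\sC$ provided by Proposition \ref{P.sp0}.
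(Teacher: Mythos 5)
Your proof is correct, but it takes a genuinely different route from the paper's. The paper argues as follows: since $E_\al$ is Montel, compactness and weak compactness of $\sC$ coincide; if $\sC$ were compact, then $\si(\sC;E_\al)$ would have to be a compact subset of $\C$ (Edwards, Theorem 9.10.2), contradicting Proposition \ref{P.spectrum}(i), which gives $\si(\sC;E_\al)=\Si=\{\frac1n : n\in\N\}$, a non-closed (hence non-compact) set. You instead isolate a clean general principle: an operator that is invertible in $\cL(X)$ cannot be weakly compact unless $X$ has a bounded $0$-neighbourhood, i.e.\ unless $X$ is normable; since $E_\al$ is an infinite-dimensional nuclear space, it is not normable, and Proposition \ref{P.sp0} supplies the invertibility. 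Each step of your argument checks out (relatively compact sets are relatively weakly compact, relatively weakly compact sets are bounded, continuous linear images of bounded sets are bounded, Kolmogorov's criterion, and the finite-dimensionality of normable nuclear spaces). What your approach buys is independence from the full spectral computation of Proposition \ref{P.spectrum} and from the compact-operator spectral theorem on lcHs': you only need $0\in\rho(\sC;E_\al)$ together with the non-normability of $E_\al$ (for which the Schwartz property alone would already suffice). What the paper's approach buys is that it does not use invertibility at all, only the Montel property and the shape of the spectrum, so it would also apply to non-invertible operators whose spectrum happens to be non-compact.
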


\begin{proof}
Since $E_\al$ is Montel, there is no distinction between $\sC$ being compact or weakly compact. So, suppose that $\sC$ is
compact. Then $\si (\sC ; E_\al)$ is necessarily a compact set in $\C$, \cite[Theorem 9.10.2]{E}, which contradicts Proposition \ref{P.spectrum}(i).
\end{proof}

The identity $\sC = \Delta \mbox{diag} ((\frac 1 n)) \Delta $ holds in $\cL (\C^\N )$ and all the three operators $\sC , \Delta$
and $\mbox{diag} ((\frac 1 n ))$ are continuous; see the discussion prior to Proposition \ref{P.spP}. For \textit{every}  positive sequence
$\al_n \uparrow \infty $ we also have that $\sC \in \cL (E_\al)$ (cf. Proposition \ref{C-Cont}) and $\mbox{diag} ((\frac 1 n ))
\in \cL (E_\al)$ (because $\mbox{diag} ((\frac 1 n )) \in \cL (c_0 (v_k))$ for every $k \in \N $). If $\Delta $ acts in $E_\al$,
then $\Delta e_n \in E_\al$ for all $n \in  \N$ and so $\si_{pt} (\sC ; E_\al) = \Si $; see \eqref{22}. Accordingly, $E_\al$ is
necessarily nuclear via Proposition \ref{P.spP}. However, this condition alone is not sufficient for the continuity of $\Delta$.

\begin{prop}\label{pr12}
For $\al$ with $\al_n \uparrow \infty $ the following assertions are equivalent.
\begin{enumerate}
  \item [\rm (i)]
The operator $\Delta \in \cL (E_\al)$.
  \item [\rm (ii)]
$\sup_{n \in \N } \frac{n}{\al_n} < \infty $.
\end{enumerate}
\end{prop}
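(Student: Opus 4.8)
The plan is to use the same machinery as in Proposition \ref{pr25}: reduce the continuity of $\Delta$ on the (LB)-space $E_\al$ to a condition on the weights $v_k$, and then translate that into a statement about the growth of $\al$. First I would invoke the closed graph theorem for (LB)-spaces to see that $\Delta$ acts in $E_\al$ if and only if $\Delta \in \cL(E_\al)$, and then the standard factorization fact for operators between (LB)-spaces to see that this is equivalent to: for every $k \in \N$ there exists $l \ge k$ such that $\Delta : c_0(v_k) \to c_0(v_l)$ is continuous. From there I would fix the convenient weights $v_k(n) := e^{-k\al_n}$, as in the earlier proofs.

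For the implication (i)$\Rightarrow$(ii), I would exploit the fact that $\Delta$ is lower triangular with $\Delta_{nk} = (-1)^{k-1}\binom{n-1}{k-1}$, so that the column $\Delta e_j$ behaves asymptotically like $(n^{j-1})$; more directly, testing the continuous map $\Delta : c_0(v_k) \to c_0(v_l)$ on the basis vector $e_j$ gives $q_l(\Delta e_j) \le M q_k(e_j)$, i.e. $\sup_n |\binom{n-1}{j-1}| v_l(n) \le M v_k(j)$. Since $\binom{n-1}{j-1} \ge (n-1)^{j-1}/(j-1)!$ for $n > j$, this forces $\sup_n n^{j-1} e^{-l\al_n} < \infty$ for every $j$, which by Lemma \ref{L.s1} already gives nuclearity of $E_\al$. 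To upgrade to the sharper bound $\sup_n n/\al_n < \infty$, I would instead test $\Delta$ on a cleverly chosen element of $c_0(v_1)$ (or equivalently pick $k=1$ and use that $\Delta : c_0(v_1) \to c_0(v_l)$ is continuous for some $l$): for a single fixed $l$ the inequality $\sup_n \binom{n-1}{j-1} e^{-l\al_n} \le M e^{-\al_j}$ must hold simultaneously for all $j$. Choosing $j = j(n)$ growing with $n$ (say $j \approx n/2$, where $\binom{n-1}{j-1}$ is of order $2^n/\sqrt n$) yields $2^n e^{-l\al_n} \lesssim e^{-\al_{n/2}}$, and taking logarithms produces a bound of the form $n \log 2 \le l\al_n$ up to lower-order terms, which gives $\sup_n n/\al_n < \infty$. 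I expect the bookkeeping in optimizing the choice of $j$ against $n$ — controlling $\binom{n-1}{j-1}$ from below by something exponential in $n$ while keeping $\al_j$ under control — to be the main obstacle, and the place where shift-stability-type estimates may be needed.

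For the converse (ii)$\Rightarrow$(i), note first that $\sup_n n/\al_n < \infty$ implies $\sup_n \log n/\al_n < \infty$, so $E_\al$ is nuclear, and moreover it is a very strong (super-exponential) decay condition on the weights. Fix $k$; I would bound $q_l(\Delta x)$ for $x \in c_0(v_k)$ using $|\binom{n-1}{m-1}| \le 2^{n-1}$ and the geometric-type sum $\sum_{m=1}^n v_k(m)^{-1} = \sum_{m=1}^n e^{k\al_m} \le n e^{k\al_n}$, obtaining
$$
v_l(n) |(\Delta x)_n| \le 2^{n-1} e^{-l\al_n} \sum_{m=1}^n e^{k\al_m} q_k(x) \le 2^{n-1} n\, e^{(k-l)\al_n} q_k(x).
$$
Since $\sup_n n/\al_n =: c < \infty$ gives $\al_n \ge n/c$, choosing $l$ with $l - k$ large enough (e.g. $l - k > c \log 2$, and then adjusting by Lemma \ref{L.s1} to kill the polynomial factor $n$) makes $2^{n-1} n\, e^{(k-l)\al_n} \to 0$ and bounded in $n$, so $\Delta : c_0(v_k) \to c_0(v_l)$ is continuous. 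Since $k$ was arbitrary, $\Delta \in \cL(E_\al)$. The routine part here is just verifying that the chosen $l$ simultaneously bounds the supremum and forces the required decay, which follows from Lemma \ref{L.s1} exactly as in the proofs of Propositions \ref{P.sp0} and \ref{pr25}.
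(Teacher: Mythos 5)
Your proposal is correct, and the converse direction (ii)$\Rightarrow$(i) is essentially the paper's argument (the paper avoids your extra factor $n$ by using that $v_k$ is decreasing, so $\sum_{m\le n} v_k(m)^{-1}\binom{n-1}{m-1}\le v_k(n)^{-1}2^{n-1}$, but your bound $\sum_{m\le n}e^{k\al_m}\le ne^{k\al_n}$ works just as well since $\al_n\gtrsim n$ kills polynomial factors). For (i)$\Rightarrow$(ii) the paper takes a slightly slicker route: it passes to the matrices $\Phi_l\Delta\Phi_k^{-1}$ on $c_0$ and invokes the classical criterion (bounded row sums of absolute values), so that summing the binomial coefficients over a whole row gives $2^{n-1}v_l(n)\le M$ exactly, with no loss; taking logarithms then gives $n/\al_n$ bounded directly. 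Your variant --- testing on the single basis vector $e_{j(n)}$ with $j(n)$ near $n/2$ --- also works, and the ``main obstacle'' you flag is in fact a non-issue: you only need $\binom{n-1}{j(n)-1}\ge 2^{n-1}/n$ (the maximal entry of a row dominates the average) together with the trivial bound $e^{-\al_{j(n)}}\le e^{-\al_1}$ on the right-hand side, so no shift-stability enters; the resulting loss of a factor $n$, i.e.\ the term $\log(Mn)/\al_n$ after taking logarithms, is absorbed because you have already established nuclearity ($\sup_n\log n/\al_n<\infty$) by testing on a fixed $e_j$, $j\ge2$. The trade-off is that the paper's row-sum argument is loss-free and self-contained, while yours is marginally more elementary (no matrix criterion needed in this direction) at the cost of the preliminary nuclearity step.
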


\begin{proof}
 For each $k \in\N $,  the surjective isometric isomorphism $\Phi_k : c_0 (v_k ) \to c_0$ was defined in the proof of
Proposition \ref{P.spectrum}. Because $E_\al = \ind_k c_0 (v_k)$ it follows that $\Delta \in \cL (E_\al)$ if and only if for each
$k \in \N $ there exists $l \in \N $ with $l > k $ such that $\Delta : c_0 (v_k) \to c_0 (v_l)$ is continuous. Moreover, the
continuity of $\Delta : c_0 (v_k) \to c_0 (v_l)$ is equivalent to  continuity of the operator  $D^{k,l} : c_0 \to c_0$, where $D^{k,l} :=
\Phi_l \Delta \Phi_k^{-1}$. Note that $\Phi_l = \mbox{diag} ((v_l (n)))$ and $\Phi_k^{-1} = \mbox{diag} ((\frac{1}{v_k (n)}))$ are diagonal
 matrices and $\Delta = (\Delta_{nm})_{n,m \in \N }$ is a lower triangular matrix, a direct calculation shows that
 $D^{k,l} = (d^{k,l}_{n m })_{n , m \in \N }$ is the lower triangular matrix where, for each $n \in \N $, $d^{k,l}_{nm} = (-1)^{m-1}
 \frac{v_l (n)}{v_k (m)} \binom{n-1}{m-1}$, for $1 \le m < n $ and $d^{k,l}_{nm} =0$ if $m > n $. It follows from \cite[Theorem 4.51-C]{T}
 that a matrix $A = (a_{nm})_{n,m \in \N }$ acts continuously on $c_0$ if and only if the matrix $(|a_{nm}|)_{n,m \in \N }$ does so and hence,
 by the same result in \cite{T}, that $\Delta \in \cL(E_\al)$ if and only if for each $k \in \N $ there exists $l \in \N $ with $l>k$ such
 that the lower triangular matrix $(|d_{nm}^{k,l}|)_{n,m \in \N }$ satisfies both
 \begin{equation}\label{220}
    \lim_{n \to \infty } |d_{nm}^{k,l}| = \lim_{n \to \infty }  \frac{v_l (n)}{v_k (m)} \binom{n-1}{m-1} = 0 , \qquad \forall m \in \N ,
 \end{equation}
 and
 \begin{equation}\label{221}
    \sup_{n \in \N } \sum^\infty_{m=1 }  |d_{nm}^{k,l}| = \sup_{n \in \N } \sum^n_{m=1}  \frac{v_l (n)}{v_k (m)} \binom{n-1}{m-1} < \infty.
 \end{equation}
Actually, \eqref{221} implies \eqref{220}. Indeed, if \eqref{221}  holds, then there exists $L > 0 $ satisfying $v_l (n) \sum^n_{m=1}
\frac{1}{v_k (m)} \binom{n-1}{m-1}  \le L $ for all $n \in \N $ and hence, as $\frac{1}{v_k (m)} = e^{k \al_m} > 1$ for all $m \in \N$,
also $2^{n-1} v_l (n) = v_l (n) \sum^n_{m=1 }  \binom{n-1}{m-1} \le L$ for all $n \in \N $. Then, for fixed $m \in \N $, it follows that
$$
n^{m-1} v_l (n) = \frac{n^{m-1}}{2^{n-1}} \cdot 2^{n-1} v_l (n) \le \frac{L \cdot  n^{m-1}}{2^{n-1}} , \qquad n \in \N .
$$
Since $(\frac{n^{m-1}}{2^{n-1}})_{n \in \N }$ is a null sequence and $ \binom{n-1}{m-1} \simeq n^{m-1}$ for $n \to \infty $ the condition
\eqref{220} follows. So, we have established that the continuity of $\Delta: E_\al \to E_\al$ is \textit{equivalent} to the following

\noindent \textit{Condition} $(\delta)$: For every $k \in \N $ there exists  $l > k$ such that \eqref{221} is satisfied.

(i)$\Rightarrow$(ii). Since Condition $(\delta)$ holds, for the choice $k=1$ there exist $l \in \N $ with $l > 1 $ and $M>1$ such that
$$
2^{n-1} v_l (n ) = v_l (n) \sum^n_{m=1} \binom{n-1}{m-1} \le \sum^n_{m=1} \frac{v_l (n)}{v_1 (m)} \binom{n-1}{m-1} \le M ,
\qquad n \in \N .
$$
Hence, $2^n v_l (n) \le 2 M $ from which it follows that
$$
\exp (n \log (2) - l a_n) \le 2 M = \exp (\log (2M)), \qquad n \in \N .
$$
Rearranging this inequality yields
$$
\frac{n}{\al_n} \le \frac{l}{\log (2)} + \frac{\log (2M)}{\al_n \log (2)} , \qquad n \in \N .
$$
Since $\al_n \uparrow \infty$, it follows that $\sup_{n \in \N } \frac{n}{\al_n} < \infty $.

(ii)$\Rightarrow$(i). Choose $M \in \N $ such that $n \le M \al_n$ for $n \in \N $. In order to verify Condition ($\delta$) fix $k \in \N $. Then
$l := (k+M) \in \N $ and $l > k$. Since $v_k$ is decreasing on $\N $ we have
$$
\sum^n_{m=1} \frac{v_l (n)}{v_k (m)} \binom{n-1}{m-1} \le  \frac{v_l (n)}{v_k (n)} \sum^n_{m=1} \binom{n-1}{m-1} \le 2^n
\frac{v_l (n)}{v_k (n)} , \qquad n \in \N .
$$
Furthermore, for each $n \in \N $, it is also the case that
$$
2^n \frac{v_l (n)}{v_k (n)} = 2^n e^{- \al_n (l-k)} = e^{n \log (2)} e^{- M \al_n} \le e^n e^{-M \al_n} \le 1 .
$$
The previous two sets of inequalities imply \eqref{221} and hence,  Condition $(\delta)$ is satisfied, i.e., $\Delta \in \cL (E_\al)$.
\end{proof}

\begin{remark}\label{R.213} \rm
(i) Clearly $\sup_{n \in \N } \frac{n}{\al_n} < \infty $ implies $E_\al$ is a nuclear space (cf. Proposition \ref{P.sp0}). On the other hand,
the sequence $\al_n := \log (n)$, $n \in \N $, has the property that $E_\al$ is nuclear but, $\Delta \notin \cL (E_\al)$ by Proposition \ref{pr12}.

(ii) The continuity of the operators $\Delta $ and $D $ on $E_\al$ is unrelated. Indeed, consider $\al_n := \sqrt{n}$, for $n \in \N $. Then
$D$ is continuous because $E_\al$ is both nuclear and shift stable (cf. Proposition \ref{pr25}) whereas $\Delta $ is not continuous
(cf. Proposition \ref{pr12}). On the other hand, $\Delta $ is continuous on $E_\al$ for $\al_n := n^n$, $n \in \N $ (via Proposition \ref{pr12}), but
$D $ fails to be continuous on this space; see Remark \ref{r26}.
\end{remark}

We end this section with an application. Consider the space of germs of holomorphic functions at $0$, namely the regular (LB)-space
defined by $H_0 := \ind_k A (\ov{ B (0, \frac 1k)})$. Here, for each $k \in \N $, $A (\ov{B (0, \frac 1k)})$ is the disc algebra consisting of all
holomorphic functions on the open disc $B (0, \frac 1k) \su \C $ which have a continuous extension to its closure $\ov{B (0, \frac 1k)}$: it
is a Banach algebra for the norm
$$
\|f\|_k := \sup_{|z|\le \frac 1 k} |f (z)| = \sup_{|z|= \frac 1k} |f (z)|, \qquad f \in A (\ov{B (0, \textstyle \frac 1k)}).
$$
It is known that the linking maps $A (\ov{B (0, \frac 1k)}) \to A( \ov{B (0, \frac {1}{k+1})})$ for $k \in \N $, which are given by restriction, are
injective and absolutely summing. By Köthe duality theory, $H_0$ is isomorphic to the strong dual of the nuclear Fréchet space $H (\C)$.
In particular, $H_0$ is a (DFN)-space. We refer to \cite[Section 2, Example 5]{B} and \cite[Ch. 5.27, Sections 3,4]{Ko} for further information
concerning spaces of holomorphic germs and their strong duals. Define $\al = (\al_n)$ by $\al_n := n $ for $n \in \N $ in which case
$\lim_{n \to \infty} \frac{\log (n)}{\al_n} =0$. Then $H (\C)$ is isomorphic to the power series  space $\Lambda_\infty^1 (\al)$ of infinite
type, \cite[Example 29.4(2)]{MV}, and its strong dual $E_\al$ is isomorphic to $H_0$. Indeed, a topological isomorphism of $H_0$ onto
$E_\al$ is given by the linear map which sends $f (z)= \sum^\infty_{n=0} a_n z^n$ (an element of $A (\ov{B (0, \frac 1k)})$ for some $k \in \N $)
to $(a_{n-1})_{n\in \N } \in E_\al$. The proof of this (known) fact relies on the following estimates.

(i) If $f \in A (\ov{B (0, \ve)})$ for some $0 < \ve <1 $ (with $f (z) = \sum^\infty_{n=0} a_n z^n$), then the Cauchy estimates for $f$ imply
$|a_n| \le \frac{1}{\ve^n} \max_{|z|= \ve } |f (z)|$ for $n \in \N_0 := \{0\} \cup \N $. Hence, if $f \in A (\ov{B (0, \frac 1k)})$ for some $k \in \N $, then
$$
|a_n|  \le k^n \max_{|z| = \frac 1k} |f (z)| = k^n \|f\|_k , \qquad n \in \N_0 .
$$

(ii) Let $a:= (a_n)_{n \in \N_0} \in \ell_\infty (v_k)$ for some $k \in \N $, where $v_k (n):= \frac{1}{(1+k)^n}$  for $n \in \N_0$,
$k \in \N $; we have taken here $s_k := \log (k+1)$.  Then $|a_n| \le q_k (a) k^n$ for $n \in \N_0$ and each fixed $k \in \N $.
Hence, if $|z| \le \frac{1}{2k}$, then $f (z) = \sum^\infty_{n=0} a_n z^n$ satisfies
$$
|f (z) | \le \sum^\infty_{n=0} |a_n| \cdot |z|^n \le q_k (a) \sum^\infty_{n=0} k^n \frac{1}{(2k)^n} = 2 q_k (a)  .
$$
Accordingly, $f \in A (\ov{B (0, \frac {1}{2k})})$.

The above facts, combined with Proposition \ref{P.spectrum} and Corollary \ref{C211}, yield the following result.

\begin{prop} \label{p214}
The Cèsaro operator $\sC : H_0 \to H_0$ is continuous with spectra
$$
\si (\sC ; H_0) = \si_{pt} (\sC ; H_0) = \Si  \quad \mathrm{and}  \quad \si^* (\sC ; H_0) = \Si_0 .
$$
In particular, $\sC$ is not {\rm(}weakly{\rm)} compact.
\end{prop}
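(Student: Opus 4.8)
The plan is to deduce the statement from the results already established for the spaces $E_\al$. First I set $\al = (\al_n)$ with $\al_n := n$ for $n \in \N$. This is a positive, strictly increasing sequence with $\al_n \uparrow \infty$, so it generates a power series space $\Lambda^1_\infty(\al)$ of infinite type; moreover $\lim_{n \to \infty} \frac{\log n}{n} = 0$, hence $\sup_{n \in \N} \frac{\log n}{\al_n} < \infty$, and so $E_\al$ is nuclear, i.e.\ a (DFN)-space. By Proposition \ref{C-Cont} we already know that $\sC \in \cL(E_\al)$.

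Next I identify $H_0$ with $E_\al$. By the Cauchy estimates in (i) and the summation estimate in (ii) recorded just above, the linear map $\Psi$ sending a germ $f(z) = \sum_{n=0}^\infty a_n z^n$ to the coefficient sequence $(a_{n-1})_{n \in \N}$ is a topological isomorphism of $H_0$ onto $E_\al$: estimate (i) shows that $A(\ov{B(0,\frac 1k)}) \hookrightarrow c_0(v_k)$ continuously, estimate (ii) shows that $\ell_\infty(v_k) \hookrightarrow A(\ov{B(0,\frac{1}{2k})})$ continuously, and these two families of linking maps exhibit the two defining inductive systems as mutually cofinal (with the index shift $k \mapsto 2k$). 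A direct computation with power series shows that $\Psi$ intertwines the Ces\`aro operator on $H_0$ (which, read off on Taylor coefficients, sends $\sum_{n\ge 0} a_n z^n$ to $\sum_{n\ge 0} \frac{a_0 + \cdots + a_n}{n+1}\, z^n$, equivalently $f \mapsto \frac 1z \int_0^z \frac{f(w)}{1-w}\, dw$) with the Ces\`aro operator $\sC$ on $E_\al$; that is, $\Psi \sC = \sC \Psi$. In particular $\sC \in \cL(H_0)$, being conjugate via $\Psi$ to $\sC \in \cL(E_\al)$.

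Finally I invoke the isomorphism-invariance of the relevant data. Conjugation by the topological isomorphism $\Psi$ preserves the point spectrum, the spectrum, the set $\sigma^*$, and the properties of being compact and of being weakly compact. Since $E_\al$ is nuclear, Proposition \ref{P.spectrum} and Corollary \ref{C211} then give at once that $\sigma(\sC; H_0) = \sigma(\sC; E_\al) = \Si = \sigma_{pt}(\sC; E_\al) = \sigma_{pt}(\sC; H_0)$, that $\sigma^*(\sC; H_0) = \sigma^*(\sC; E_\al) = \Si_0$, and that $\sC$ is neither compact nor weakly compact on $H_0$, as required.

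The only step requiring genuine care is the verification that $\Psi$ is a topological isomorphism onto $E_\al$ and that it transports $\sC$ to $\sC$; but the bounds in (i) and (ii) above are exactly what a standard cofinality argument needs, and the intertwining is a routine power-series manipulation, so no real obstacle remains, everything else being a direct citation of Proposition \ref{P.spectrum} and Corollary \ref{C211}.
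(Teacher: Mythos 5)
Your proposal is correct and follows essentially the same route as the paper: the paper also takes $\al_n := n$, uses the Cauchy estimates (i) and the summation estimates (ii) to identify $H_0$ with the nuclear space $E_\al$ via the Taylor-coefficient map, and then cites Proposition \ref{P.spectrum} and Corollary \ref{C211}. Your explicit verification of the intertwining $\Psi\sC = \sC\Psi$ is a detail the paper leaves implicit, but nothing in your argument diverges from its proof.
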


\section{The spectrum of $\sC$ in the non-nuclear case}

The aim of this section is to give a complete description of the spectrum of $\sC \in \cL (E_\al)$ for the case when
$E_\al $ is \textit{not} nuclear. It turns out that $\si (\sC; E_\al)$ and $\si^* (\sC; E_\al)$ are dramatically different to
that when $E_\al$ is nuclear. The following fact, which we record for the sake of explicit reference, is immediate from
\eqref{23} and Propositions \ref{P.spP} and \ref{P.sp0}.

\begin{prop} \label{p31}
For $\al$ with $\al_n \uparrow \infty $ the following assertions are equivalent.
\begin{enumerate}
  \item [\rm (i)]
$E_\al$   is not nuclear.
\item[\rm (ii)]
$\si_{pt} (\sC; E_\al) = \{1\}$.
\item[\rm (iii)]
$0 \in \si (\sC; E_\al )$.
\end{enumerate}
\end{prop}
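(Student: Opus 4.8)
The plan is to obtain all three equivalences as a direct logical consequence of the two preceding propositions together with the inclusion chain \eqref{23}; no new estimates should be needed, so this will be a short bookkeeping argument rather than a genuine computation.

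I would first treat (i) $\Leftrightarrow$ (ii). The starting point is that \eqref{23} guarantees $1 \in \si_{pt}(\sC; E_\al)$ for \emph{every} $\al$ with $\al_n \uparrow \infty$, and also that $\si_{pt}(\sC; E_\al) \su \Si$. Proposition \ref{P.spP} says that $E_\al$ is nuclear exactly when $\si_{pt}(\sC; E_\al) \setminus \{1\} \ne \emptyset$ (its assertion (iv)). Negating, $E_\al$ fails to be nuclear precisely when $\si_{pt}(\sC; E_\al) \setminus \{1\} = \emptyset$, i.e. when $\si_{pt}(\sC; E_\al) \su \{1\}$; feeding in the always-valid fact $1 \in \si_{pt}(\sC; E_\al)$ upgrades this containment to the equality $\si_{pt}(\sC; E_\al) = \{1\}$ in (ii), and conversely the equality in (ii) forces $\si_{pt}(\sC; E_\al) \setminus \{1\} = \emptyset$, hence non-nuclearity. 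For (i) $\Leftrightarrow$ (iii) I would simply invoke Proposition \ref{P.sp0}, which identifies nuclearity of $E_\al$ with $0 \in \rho(\sC; E_\al)$; taking complements in $\C$ turns this into ``$E_\al$ not nuclear $\iff$ $0 \in \si(\sC; E_\al)$'', which is exactly the claim.

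The only step that calls for any care — and hence the nearest thing to an obstacle — is the passage from $\si_{pt}(\sC; E_\al) \su \{1\}$ to the sharp equality in (ii): Proposition \ref{P.spP}(iv) by itself only detects whether $\si_{pt}$ contains \emph{some} point besides $1$, so one must separately record that $1$ is always a genuine eigenvalue of $\sC$ on $E_\al$ (because $\Delta e_1 = \boldsymbol{1} \in c_0(v_1) \su E_\al$; see \eqref{22} and \eqref{23}). With that in hand every implication is immediate.
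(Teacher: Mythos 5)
Your argument is correct and is precisely the one the paper intends: the authors state that Proposition \ref{p31} is immediate from \eqref{23} together with Propositions \ref{P.spP} and \ref{P.sp0}, and your bookkeeping (negating \ref{P.spP}(i)$\Leftrightarrow$(iv) and \ref{P.sp0}, plus the ever-present eigenvalue $1$ from \eqref{23}) fills in exactly those details. Nothing is missing.
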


The following general result will be useful in the sequel. For  each $r > 0$ we adopt the notation $D (r) := \{ \lambda \in \C :
|\lambda- \frac{1}{2r}| < \frac{1}{2r}\}$.

\begin{prop}\label{p32}
Let $\al $ satisfy $\al_n \uparrow \infty $. Then
$$
\Si \su \si (\sC ; E_\al)  \su \ov{D(1)} .
$$
\end{prop}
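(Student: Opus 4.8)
The plan is to prove the two inclusions separately, both by invoking results already established above.

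The inclusion $\Si \su \si(\sC; E_\al)$ requires nothing new: it is precisely the content of \eqref{23}, which holds for every $\al$ with $\al_n \uparrow \infty$ because $\vp$ is dense in $E_\al$ and $\Delta e_1 = \boldsymbol 1 \in c_0(v_1) \su E_\al$.

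For the inclusion $\si(\sC; E_\al) \su \ov{D(1)}$ I would show that every $\lambda \in \C$ with $|\lambda - \tfrac 1 2| > \tfrac 1 2$ lies in $\rho(\sC; E_\al)$. Fix such a $\lambda$. Since $\Si \su \ov{D(1)}$, we have $\lambda \notin \Si$, so $\lambda I - \sC$ is injective on all of $\C^\N$ and the inverse operator $T := (\lambda I - \sC)^{-1}$ exists in $\cL(\C^\N)$ (see the discussion preceding \eqref{22}). Now apply Lemma \ref{L28} with $w := v_k$ — legitimate, as each $v_k$ is strictly positive and decreasing — to get $\si(\sC; c_0(v_k)) \su \ov{D(1)}$, hence $\lambda \in \rho(\sC; c_0(v_k))$, for every $k \in \N$; thus $\lambda I - \sC$ is a topological isomorphism of each Banach space $c_0(v_k)$. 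By injectivity of $\lambda I - \sC$ on $\C^\N$, these step-wise inverses are mutually consistent and each coincides with the restriction of $T$ to $c_0(v_k)$.

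It then remains to transfer this to $E_\al = \ind_k c_0(v_k)$. First, $T$ maps $E_\al$ into $E_\al$: any $y \in E_\al$ lies in some $c_0(v_k)$, whence $Ty \in c_0(v_k) \su E_\al$. As a self-map of $E_\al$, $T$ is a bijection with inverse $\lambda I - \sC \in \cL(E_\al)$ (injectivity inherited from $\C^\N$, surjectivity from the step-wise surjectivity just noted), and $T$ is continuous: by the universal property of the (LB)-topology on $E_\al$ it suffices that $T|_{c_0(v_k)} : c_0(v_k) \to E_\al$ be continuous for each $k$, and this map factors as the composition $c_0(v_k) \xrightarrow{T} c_0(v_k) \hookrightarrow E_\al$ of two continuous maps. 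Hence $\lambda \in \rho(\sC; E_\al)$, proving $\si(\sC; E_\al) \su \ov{D(1)}$. The argument is essentially routine once Lemma \ref{L28} is available; the only mildly delicate points are the consistency of the step-wise resolvents (which injectivity on $\C^\N$ settles) and the inductive-limit continuity step — and, notably, no regularity of $E_\al$ is needed for this direction.
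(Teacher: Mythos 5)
Your proof is correct and follows essentially the same route as the paper: the lower inclusion is the already-established \eqref{23} (which the paper's proof simply re-derives in detail via the eigenvectors $u^{(m)}$ of the dual operator $\sC'$), and the upper inclusion rests, as in the paper, on applying Lemma \ref{L28} to each step $c_0(v_k)$. The only difference is that the paper then invokes the abstract Lemma \ref{Le-spectrumNV}(ii) to pass from $\bigcup_{k}\si(\sC;c_0(v_k))\su \ov{D(1)}$ to $\si(\sC;E_\al)\su \ov{D(1)}$, whereas you carry out that inductive-limit transfer by hand --- correctly, with the consistency of the step-wise resolvents settled by injectivity on $\C^\N$ and, as you note, without needing regularity of $E_\al$.
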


\begin{proof}
Since $\sC \in \cL (E_\al)$, its dual operator $\sC '$ is defined, continuous on the strong dual $E'_\al = \bigcap_{k \in \N } \ell_1
(\frac{1}{v_k}) = \proj_k \ell_1 (\frac{1}{v_k})$  of $E_\al = \ind_k c_0 (v_k)$ and is given by the formula
$$
\sC ' y := \Big ( \sum^\infty_{j = n} \frac{y_j}{j}\Big)_{n \in \N }, \qquad y = (y_n) \in E'_\al ;
$$
see (3.7) in \cite[p.\,774]{ABR-9}, for example, after noting that $E'_\al \su \ell_1 (\frac{1}{v_1})$. Given $\lambda \in \Si$ there is
$m \in \N $ with $\lambda = \frac 1 m$. Define $u^{(m)}$ by $u^{(m)}_n := \prod^{n-1}_{k=1} (1- \frac{1}{\lambda k})$ for $1<n\le m$
(with $u^{(m)}_1 := 1$) and $u^{(m)}_n := 0$ for $n  >m$. It is routine to verify that $u^{(m)} \in E'_\al$ (as $u^{(m)} \in \vp$) and
$\sC' u^{(m)} = \frac 1 m u^{(m)}$, i.e., $\lambda \in \si_{pt} (\sC'; E'_\al)$. It follows that $\lambda \in  \si (\sC ;E_\al)$. Indeed, if not,
then $\lambda \in \rho (\sC ;E_\al)$ and so $(\sC - \lambda I) (E_\al) = E_\al$. This implies, for each $z \in E_\al$ that there exists $x \in E_\al$
satisfying $(\sC - \lambda I ) x = z $. Hence,
$$
\la z, u^{(m)}\ra = \la (\sC - \lambda I) x , u^{(m)}\ra  = \la x, (\sC' - \lambda I) u^{(m)}\ra =0 ,
$$
that is, $\la z, u^{(m)}\ra =0$ for all $z \in E_\al$. Since $u^{(m)} \ne 0$, this is a contradiction. So, $\lambda \in \si (\sC ; E_\al) $. This
establishes that $\Si \su \si (\sC ; E_\al)$.

According to Lemma \ref{L28} we see that $\si (\sC_k ; c_0 (v_k)) \su \ov{D(1)}$ for all $k \in \N $, where $\sC_k : c_0 (v_k) \to c_0 (v_k)$
is the restriction of $\sC \in \cL (\C^\N)$. Hence,
$$
\bigcap_{m \in \N } \Big(\bigcup_{k=m}^\infty  \si (\sC_k ; c_0 (v_k))\Big) \su \ov{D (1)}
$$
and so   $\si (\sC; E_\al) \su \ov{D(1)}$; see Lemma \ref{Le-spectrumNV} in the Appendix.
\end{proof}

The following result identifies a large part of $\si (\sC ; E_\al)$.

\begin{prop}\label{p33}
Let $\al $ satisfy $\al_n \uparrow \infty $ and such that $E_\al$ is not nuclear. Then
$$
\{0,1\} \cup D (1) \su \si (\sC; E_\al) \su \ov{D(1)} .
$$
\end{prop}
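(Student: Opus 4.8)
The plan is to observe that the two outer inclusions of the statement are essentially already at hand, so that the real work is the middle inclusion $D(1)\su\si(\sC;E_\al)$. Indeed, Proposition \ref{p32} gives at once $\Si\su\si(\sC;E_\al)\su\ov{D(1)}$, which supplies the asserted upper bound and, since $1\in\Si$, also the point $1$. As $E_\al$ is not nuclear, Proposition \ref{p31} yields $0\in\si(\sC;E_\al)$. Since moreover $D(1)\cap\Si=\{\frac1n:n\in\N,\ n\ge2\}\su\Si\su\si(\sC;E_\al)$, it remains only to prove that each $\mu\in D(1)\s\Si$ lies in $\si(\sC;E_\al)$. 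For such a $\mu$ I would show that $\sC-\mu I$ is \emph{not surjective} on $E_\al$, by exhibiting a vector of $E_\al$ whose (unique) preimage under $\sC-\mu I$ in $\C^\N$ escapes $E_\al$; the natural candidate is $e_1\in\vp\su E_\al$.

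First I would fix $\mu\in D(1)\s\Si$; then $\mu\notin\Si_0$ (as $0\notin D(1)$), and by \cite[Remark 3.5]{ABR-9} the condition $|\mu-\frac12|<\frac12$ is equivalent to $a(\mu)>1$, so $\g:=a(\mu)-1>0$. Work with the weights $v_k(n):=e^{-k\al_n}$. Since $\mu\notin\Si$, the map $\sC-\mu I\colon\C^\N\to\C^\N$ is bijective, and the explicit row formula for $(\sC-\mu I)^{-1}$ recalled in the proof of Proposition \ref{P.spectrum} gives
$$
\big|\big((\sC-\mu I)^{-1}e_1\big)_n\big|=\frac{1}{n\,|\mu|^2\,\prod_{k=1}^n\big|1-\frac{1}{\mu k}\big|},\qquad n\ge2 .
$$
Applying Lemma \ref{L27} at the single point $\mu$ furnishes a constant $C_\mu>0$ with $\prod_{k=1}^n|1-\frac{1}{\mu k}|\le C_\mu\,n^{-a(\mu)}$ for all $n\in\N$, whence
$$
\big|\big((\sC-\mu I)^{-1}e_1\big)_n\big|\ \ge\ \frac{1}{|\mu|^2C_\mu}\,n^{\,a(\mu)-1}\ =\ \frac{1}{|\mu|^2C_\mu}\,n^{\g},\qquad n\ge2 .
$$

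The final step feeds in non-nuclearity: $E_\al$ is not nuclear exactly when condition (iii) of Lemma \ref{L.s1} fails, i.e.\ when $\sup_{n\in\N}n^{\g}e^{-k\al_n}=\infty$ for \emph{every} $\g>0$ and \emph{every} $k\in\N$. Using this with our $\g=a(\mu)-1$ gives, for each $k\in\N$,
$$
\sup_{n\in\N}\big|\big((\sC-\mu I)^{-1}e_1\big)_n\big|\,v_k(n)\ \ge\ \frac{1}{|\mu|^2C_\mu}\,\sup_{n\ge2}n^{\g}e^{-k\al_n}\ =\ \infty ,
$$
so $(\sC-\mu I)^{-1}e_1\notin\ell_\infty(v_k)$ for every $k\in\N$, hence $(\sC-\mu I)^{-1}e_1\notin E_\al=\ind_k\ell_\infty(v_k)$. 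If now $\mu\in\rho(\sC;E_\al)$, then $\sC-\mu I$ would map $E_\al$ onto $E_\al$, so some $x\in E_\al$ would satisfy $(\sC-\mu I)x=e_1$; reading this identity in $\C^\N$ and using the bijectivity of $\sC-\mu I$ there forces $x=(\sC-\mu I)^{-1}e_1\notin E_\al$, a contradiction. Therefore $\mu\in\si(\sC;E_\al)$, which establishes $D(1)\su\si(\sC;E_\al)$ and hence the proposition.

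I do not foresee a genuine obstacle: with Propositions \ref{p32}, \ref{p31} and Lemmas \ref{L27}, \ref{L.s1} available, the argument is bookkeeping. The two points requiring care are that one must use the \emph{upper} estimate of Lemma \ref{L27} (in order to obtain a \emph{lower} bound on the coordinates of the preimage), and that non-nuclearity must be translated, via the failure of Lemma \ref{L.s1}(iii), into ``$\sup_n n^\g e^{-k\al_n}=\infty$ for all $\g>0$ and all $k\in\N$''. One could instead route the computation through the decomposition $(\sC-\mu I)^{-1}=D_\mu-\frac1{\mu^2}E_\mu$ of the proof of Proposition \ref{P.spectrum} (noting $D_\mu e_1\in E_\al$, so that it is the term $E_\mu e_1$ which leaves $E_\al$); the direct row formula is marginally shorter.
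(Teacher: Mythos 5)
Your proof is correct and is essentially the paper's own argument: both reduce matters to the first column of the formal inverse $(\sC-\mu I)^{-1}$, whose $n$-th entry grows like $n^{a(\mu)-1}$ with $a(\mu)-1>0$ on $D(1)\setminus\Si$, and both derive the contradiction with non-nuclearity via Lemma \ref{L.s1}. The only cosmetic differences are that you extract the contradiction from non-surjectivity (the preimage of $e_1$ escapes every $\ell_\infty(v_k)$) rather than from the vanishing-column condition for the conjugated matrix $\wt{E}_{\mu,1,l}$ acting on $c_0$, and that you obtain the lower bound on $|e_{n1}(\mu)|$ from Lemma \ref{L27} instead of citing the estimate (3.10) of \cite{ABR-9}; both choices are sound.
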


\begin{proof}
It follows from Propositions \ref{p31} and \ref{p32} that $\Si_0 \su \si (\sC; E_\al) \su \ov{D(1)}$. So, it remains to verify that $(D (1)
\s \Si ) \su \si (\sC; E_\al) $. This is achieved via a contradiction argument.

Let $\lambda \in D (1) \s \Si $ and suppose that $\lambda \in \rho  (\sC; E_\al) $. Note that $\beta := \mbox{Re} (\frac 1 \lambda ) > 1$.
Since $(\sC - \lambda I )^{-1} : E_\al \to E_\al$  is continuous, for $k = 1$ there exists $l \in \N $ with $l > 1 $ such that $(\sC - \lambda I)^{-1} :
c_0 (v_1) \to c_0 (v_l)$ is continuous. In the notation of the proof of Proposition \ref{P.spectrum} it follows that the linear map
$\wt{E}_{\lambda,1 ,l} : c_0 \to c_0$ is continuous, where $\wt{E}_{\lambda,1 ,l} = (\tilde{e}_{n m}^{1, l} (\lambda ))_{n, m \in \N }$ is the lower
triangular matrix given by
\begin{equation}\label{n31}
    \tilde{e}_{n m}^{1, l} (\lambda ) = \frac{v_l (n)}{v_1 (m)} e_{n m} (\lambda ), \qquad \forall n  \ge 2 , \quad 1 \le m < n ,
\end{equation}
and $\tilde{e}_{n m}^{1, l} (\lambda ) =0$ otherwise. Here $e_{n,m} (\lambda ) = \frac{1}{n \prod^n_{k=m} (1- \frac{1}{ \lambda k})}$  if $1 \le m <n$
and $e_{nm} (\lambda ) =0$ if $m\ge n$. According to the inequality (3.10) in \cite[p.\,776]{ABR-9}, there exist positive constants
$c,d$ such that
\begin{equation}\label{n32}
    \frac{c}{n^{1- \beta }}  \le |e_{n1} (\lambda )| \le \frac{d}{n^{1- \beta }} , \qquad n \ge 2 .
\end{equation}
Since  $\wt{E}_{\lambda, 1 , l } \in \cL (c_0)$, a well known criterion, \cite[Lemma 2.1]{ABR-9}, \cite[Theorem 4.51-C]{T}, implies that necessarily
\begin{equation}\label{n33}
    \lim_{n \to \infty } \tilde{e}_{n m}^{1, l} (\lambda ) =0, \quad m \in \N.
\end{equation}
It now follows from \eqref{n31}, the left-inequality in \eqref{n32}, and \eqref{n33} with $m =1$, that
$$
\lim_{n \to \infty } n^{\beta-1} e^{- l \al_n} = \lim_{n \to \infty } n^{\beta-1} v_l (n) =0  .
$$
Since   $\beta > 1$, it follows from Lemma \ref{L.s1} that $\sup_{n \in \N } \frac{\log (n)}{\al_n} < \infty $ which contradicts the non-nuclearity of
$E_\al$ (cf. Proposition \ref{P.spP}). Hence, \textit{no} $\lambda \in D (1) \s \Si$ exists with $\lambda \in \rho (\sC; E_\al)$.
\end{proof}

We now come to the main result of this section.
\begin{prop} \label{pr34}
Let $\al$ satisfy $\al_n \uparrow \infty $ and such that $E_\al$ is not nuclear.
\begin{enumerate}
  \item [\rm (i)]
 If $\sup_{n \in \N }  \frac{\log (\log (n))}{\al_n} < \infty $, then
 $$
\si (\sC; E_\al) = \{0,1 \} \cup D(1) \ \mbox{ and } \ \; \si^* (\sC; E_\al) = \ov{D(1)} .
$$
  \item [\rm (ii)]
If   $\sup_{n \in \N }  \frac{\log (\log (n))}{\al_n} = \infty $, then
$$
\si (\sC; E_\al) = \ov{D(1)} = \si^* (\sC; E_\al)  .
$$
\end{enumerate}
\end{prop}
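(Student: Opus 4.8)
The plan is to combine the inclusions $\{0,1\}\cup D(1)\su\si(\sC;E_\al)\su\ov{D(1)}$ furnished by Proposition~\ref{p33} with a direct analysis of the critical circle $\partial D(1)=\{\lambda\in\C:|\lambda-\tfrac12|=\tfrac12\}=\{\lambda\neq0:a(\lambda)=1\}$; note that $\Si_0$ meets $\partial D(1)$ only in $\{0,1\}$. The key quantitative input is that, for $\lambda\notin\Si_0$, writing the product $\prod_{k=m}^{n}|1-\tfrac{1}{\lambda k}|$ as a quotient of two products of the shape appearing in \eqref{27} and applying Lemma~\ref{L27} twice gives $|e_{nm}(\lambda)|\simeq n^{a(\lambda)-1}m^{-a(\lambda)}$; on $\partial D(1)$ this reads $|e_{nm}(\lambda)|\simeq 1/m$.

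First I would settle $\si^*(\sC;E_\al)$, which equals $\ov{D(1)}$ in \emph{both} cases and needs no $\log\log$-dichotomy. Since $\si^*(\sC;E_\al)$ is closed and contains $\si(\sC;E_\al)\supseteq D(1)$, it contains $\ov{D(1)}$. Conversely, if $|\lambda-\tfrac12|>\tfrac12$ (equivalently $a(\lambda)<1$), Lemma~\ref{L28} provides $\delta_\lambda>0$, $M_\lambda>0$ \emph{depending only on $\lambda$} with $\|(\mu I-\sC)^{-1}\|_{op}\le M_\lambda/(1-a(\mu))$ on every step $c_0(v_k)$ for $\mu\in B(\lambda,\delta_\lambda)$; after shrinking $\delta_\lambda$ so that $a(\mu)\le1-\eta$ there, these norms are $\le M_\lambda/\eta$ uniformly in $k$. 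By regularity of $E_\al$ and the inequalities $q_l\le q_k$ on $c_0(v_k)$ for $l\ge k$ (Lemma~\ref{Lereg}), the family $\{(\mu I-\sC)^{-1}:\mu\in B(\lambda,\delta_\lambda)\}$ is then equicontinuous in $\cL(E_\al)$ and lies in $\rho(\sC;E_\al)$, so $\lambda\in\rho^*(\sC;E_\al)$. Hence $\C\setminus\ov{D(1)}\su\rho^*(\sC;E_\al)$ and $\si^*(\sC;E_\al)=\ov{D(1)}$ in either case.

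For part (i), where $A:=\sup_n\tfrac{\log\log n}{\al_n}<\infty$, it remains to prove $\partial D(1)\setminus\{0,1\}\su\rho(\sC;E_\al)$, for then $\rho(\sC;E_\al)\supseteq(\C\setminus\ov{D(1)})\cup(\partial D(1)\setminus\{0,1\})=\C\setminus(\{0,1\}\cup D(1))$ and, with Proposition~\ref{p33}, $\si(\sC;E_\al)=\{0,1\}\cup D(1)$. So fix $\lambda$ with $a(\lambda)=1$, $\lambda\notin\Si_0$, and use the weights $v_k(n)=e^{-k\al_n}$. By \eqref{e:dec1}, $(\sC-\lambda I)^{-1}=D_\lambda-\tfrac{1}{\lambda^2}E_\lambda$; the diagonal part $D_\lambda$ is bounded on each $c_0(v_k)$ since $\lambda\notin\Si_0$, and for $E_\lambda$ one passes to $\tilde{E}_{\lambda,k,l}=\Phi_lE_\lambda\Phi_k^{-1}$ on $c_0$, whose entries obey $|\tilde{e}^{k,l}_{nm}(\lambda)|\le C_\lambda\,e^{-l\al_n}e^{k\al_m}/m$ from $|e_{nm}(\lambda)|\le C_\lambda/m$. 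Using $\al_m\le\al_n$ and \eqref{28},
\[
\sum_{m=1}^{n-1}|\tilde{e}^{k,l}_{nm}(\lambda)|\le C_\lambda\,e^{-(l-k)\al_n}\sum_{m=1}^{n-1}\tfrac1m\le C_\lambda\,e^{-(l-k)\al_n}(1+\log n),
\]
and choosing $l=k+t$ with $t\in\N$, $t\ge A$, the inequality $\log\log n\le A\al_n$ makes the right-hand side bounded in $n$; since also $\lim_n\tilde{e}^{k,l}_{nm}(\lambda)=0$, \cite[Lemma~2.1]{ABR-9} gives $\tilde{E}_{\lambda,k,l}\in\cL(c_0)$, i.e. $(\sC-\lambda I)^{-1}\colon c_0(v_k)\to c_0(v_l)$ is continuous for every $k$, so $\lambda\in\rho(\sC;E_\al)$.

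For part (ii), $\sup_n\tfrac{\log\log n}{\al_n}=\infty$, I would argue by contradiction: were some $\lambda\in\partial D(1)\setminus\{0,1\}$ in $\rho(\sC;E_\al)$, then for $k=1$ some step would give $(\sC-\lambda I)^{-1}\colon c_0(v_1)\to c_0(v_l)$ continuous, hence $\tilde{E}_{\lambda,1,l}\in\cL(c_0)$, and \cite[Lemma~2.1]{ABR-9} would force $\sup_n\sum_{m=1}^{n-1}|\tilde{e}^{1,l}_{nm}(\lambda)|<\infty$. But the lower bound $|e_{nm}(\lambda)|\ge c_\lambda/m$ together with $e^{\al_m}>1$ and \eqref{28} gives $\sum_{m=1}^{n-1}|\tilde{e}^{1,l}_{nm}(\lambda)|\ge c_\lambda\,e^{-l\al_n}\sum_{m=1}^{n-1}\tfrac1m\ge c_\lambda\,e^{-l\al_n}\log n$, and $\sup_n e^{-l\al_n}\log n=\infty$ (otherwise $e^{-l\al_n}\log n\le M$ would give $\tfrac{\log\log n}{\al_n}\le l+\tfrac{\log M}{\al_n}$, a bounded quantity since $\al_n\to\infty$); this is the contradiction. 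Thus $\partial D(1)\setminus\{0,1\}\su\si(\sC;E_\al)$, so $\si(\sC;E_\al)=\ov{D(1)}$ by Proposition~\ref{p33}, while $\si^*(\sC;E_\al)=\ov{D(1)}$ was already shown. I expect the main obstacle to be the two-sided control of the off-diagonal entries $e_{nm}(\lambda)$ on the circle $a(\lambda)=1$, arranged so that $\sup_n\tfrac{\log\log n}{\al_n}$ comes out as the exact threshold in both directions; within this the delicate point is the $\rho$-direction of (i), where the jump $l-k$ must be calibrated against $A$.
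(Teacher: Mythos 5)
Your proof is correct and, for the identification of $\si(\sC;E_\al)$ itself, follows essentially the same route as the paper: the decomposition \eqref{e:dec1}, the reduction to $\wt{E}_{\lambda,k,l}\in\cL(c_0)$ via \cite[Lemma 2.1]{ABR-9}, the choice $l=k+t$ with $t$ calibrated against $\sup_n\frac{\log\log n}{\al_n}$ in part (i), and the lower bound $\sum_m|\tilde e^{1,l}_{nm}(\lambda)|\ge c\,e^{-l\al_n}\log n$ forcing the contradiction in part (ii). Two points differ. First, you derive the two-sided estimate $|e_{nm}(\lambda)|\simeq 1/m$ on $\partial D(1)$ from Lemma \ref{L27} by writing $\prod_{k=m}^n$ as a quotient of two full products; the paper instead quotes \cite[Lemma 3.3]{ABR-9} to obtain \eqref{n36}. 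Your derivation is valid and has the advantage of staying self-contained within the paper's own lemma. Second, for $\si^*(\sC;E_\al)$ you argue directly that $\C\setminus\ov{D(1)}\su\rho^*(\sC;E_\al)$ using Lemma \ref{L28} and the equicontinuity criterion of Lemma \ref{Lereg} (as in Case (i) of the proof of Proposition \ref{P.spectrum}), which cleanly decouples the $\si^*$ statement from the $\log\log$ dichotomy; the paper instead invokes the abstract Lemma \ref{Le-spectrumNV}(iii) via $\bigcup_k\si(\sC_k;c_0(v_k))\su\ov{D(1)}=\ov{\si(\sC;E_\al)}$. Both work; yours is more hands-on, the paper's is shorter once the abstract lemma is available.

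One small correction: Lemma \ref{L28} does \emph{not} give constants $\delta_\lambda,M_\lambda$ ``depending only on $\lambda$'' uniformly over all steps $c_0(v_k)$ --- the constant $K$ from \cite[Lemma VII.6.11]{DSI} used in its proof depends on the operator, hence on the weight $w=v_k$, so you really get $M_{k,\lambda}$ (exactly as written in Case (i) of the proof of Proposition \ref{P.spectrum}). This does not damage your argument, because the equicontinuity condition \eqref{eq.Eq} of Lemma \ref{Lereg} only requires, for each $k$, a constant $D_k$ that may depend on $k$; taking $l=k$ and $D_k=M_{k,\lambda}/\eta$ yields the required equicontinuity of $\{(\mu I-\sC)^{-1}:\mu\in B(\lambda,\delta_\lambda)\}$ in $\cL(E_\al)$. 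You should just drop the claim of uniformity in $k$.
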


\begin{proof}
In the notation of the proof of Proposition \ref{P.spectrum}, for each $\lambda \in \C \s \Si_0$ the inverse operator
 $(\sC - \lambda I)^{-1} \in \cL (\C^N)$   satisfies
$$
(\sC - \lambda I)^{-1} = D_\lambda - \frac{1}{\lambda^2} E_\lambda ;
$$
see \eqref{e:dec1}. It is also argued there (as a consequence of the fact that the diagonal in $D_\lambda$ is a bounded sequence)
that $(\sC - \lambda I)^{-1} : E_\al \to E_\al$ is continuous if and only if $E_\lambda \in \cL (E_\al) $; the nuclearity of $E_\al$ is not
used for this part of the argument. Moreover, since $E_\al$ is an inductive limit, general theory yields that $E_\lambda \in \cL (E_\al)$
if and only if for each $k \in \N $ there exists $l \in \N $ with $l > k$ such that $E_\lambda : c_0 (v_k) \to c_0 (v_l)$ is continuous. With
$\wt{E}_{\lambda,k ,l} = (\tilde{e}_{n m}^{k, l} (\lambda ))_{n, m \in \N }$, where $\tilde{e}_{n m}^{k, l} (\lambda ) := \frac{v_l (n)}{v_k (m)}
e_{nm} (\lambda)$ for $n , m \in \N $, it follows via the argument used in Case (ii) of the proof of Proposition \ref{P.spectrum} (see also
the proof of Proposition \ref{p33}, where $k=1$ can be replaced by an arbitrary $k \in \N $) that $E_\lambda : c_0 (v_k) \to c_0 (v_l)$ is
continuous if and only if $\wt{E}_{\lambda, k , l } : c_0 \to c_0$ is continuous. Via  \cite[Theorem 4.51-C]{T} this
is equivalent to both of the following conditions being satisfied:
\begin{equation}\label{n34}
    \lim_{n \to \infty } |\tilde{e}_{n m}^{k, l} (\lambda )| = \lim_{n \to \infty } \frac{v_l (n)}{v_k (m)} |e_{nm} (\lambda) | =0 , \qquad \forall  m \in \N ,
\end{equation}
and
\begin{equation}\label{n35}
 \sup_{n \in \N }    \sum^\infty_{m=1} \frac{v_l (n)}{v_k (m)} |e_{nm} (\lambda) |= \sup_{n \in \N } \sum^{n-1}_{m=1}
 \frac{v_l (n)}{v_k (m)} |e_{nm} (\lambda) | < \infty .
\end{equation}

Next, if $\lambda \notin \{0,1\}$ belongs to the boundary $\partial D (1)$ of $D(1)$, then $\beta := \mbox{Re} (\frac 1 \lambda ) =1$ and
$\lambda \notin \Si_0$. Accordingly, Lemma 3.3 of \cite{ABR-9} ensures the existence of positive constants $c,d$ such that
$c \le |e_{n1} (\lambda)| \le d $ for all $n \in \N $ and
\begin{equation}\label{n36}
    \frac c m \le |e_{nm} (\lambda )| \le \frac d m , \qquad \forall n \in \N , \quad 2 \le m <  n .
\end{equation}
In order to deduce \eqref{n36} from \cite[Lemma 3.3]{ABR-9} we have used the formula
$$
 |e_{nm} (\lambda )| = \frac{1}{(m-1)} \cdot \frac{(m-1) \prod^{m-1}_{k=1}|1- \frac{1}{\lambda k}|}{n \prod^n_{k=1}|1- \frac{1}{\lambda k }|},
 \qquad \forall n \in \N , \: \ 2 \le m <  n  .
$$
Henceforth we use $v_r (n) := e^{-r \al_n}$ for all $r, n \in \N $.
Note that \eqref{n34} is satisfied for every $ \lambda \in \partial D(1) \s \{0,1\}$. Indeed, for fixed $m \in \N$, we have via \eqref{n36} that
$$
 \frac{v_l (n)}{v_k (m)} |e_{nm} (\lambda) | \le \frac{de^{k \al_m}}{m e^{l\al_n}} \le \frac{d'}{e^{l\al_n}}, \qquad n \in \N ,
$$
from which \eqref{n34} is clear.

(i) Since $\sup_{n \in \N } \frac{\log (\log (n))}{\al_n} < \infty $, there exists $M \in \N $ such that $\log (\log (n)) \le M \al_n$, equivalently
$ \log (n) \le e^{M \al_n}$ for $n \in \N $. Fix $\lambda \in \partial D(1) \s \{0,1\}$; in particular,
$\lambda \notin \Si_0$. Given $k \in \N $ define $l := k+M$. Then, for every $n \ge 2 $, it follows from \eqref{28}, \eqref{n36} and
$ (l-k)=M$ that
\begin{eqnarray*}
&& \sum^{n-1}_{m=1} \frac{v_l (n)}{v_k (m)} |e_{nm} (\lambda) | \le \frac{d}{e^{l \al_n}} \sum^{n-1}_{m=1} \frac{e^{k \al_m}}{m} \le
 \frac{d e^{k \al_n}}{e^{l \al_n}} \sum^{n-1}_{m=1} \frac 1m \\
 && \qquad \le \frac{1+ \log (n)}{e^{M \al_n}} = e^{- M \al_n} + \frac{\log (n)}{e^{M \al_n}} \le 2 .
\end{eqnarray*}
Accordingly, \eqref{n35} is satisfied. Since \eqref{n34} holds, we conclude that $\wt{E}_{\lambda , k, l} : c_0 \to c_0$ is
continuous, equivalently that $ (\sC - \lambda I)^{-1} \in \cL (E_\al)$. It follows that  $\partial D (1) \s \{0,1\} \su \rho (\sC; E_\al)$ and so
$\si (\sC; E_\al)= \{0,1\} \cup D (1)$; see Proposition \ref{p33}.

It was shown in the proof of Proposition \ref{p32} that $\bigcup^\infty_{k=1} \si (\sC_k ; c_0 (v_k)) \su \ov{D(1)}$. Since $\si (\sC; E_\al) =
\{0,1\} \cup D (1)$, we have $\ov{\si (\sC; E_\al)} = \ov{D(1)}$ and so $\bigcup^\infty_{k=1} \si (\sC_k ; c_0 (v_k)) \su \ov{\si (\sC; E_\al)}$.
It follows from Lemma \ref{Le-spectrumNV}(iii) in the Appendix that $\si^* (\sC; E_\al)= \ov{D(1)}$.

(ii) Fix $\lambda \in \partial D (1) \s \{0,1\}$. Observe first, for $k=1$ and $l \in \N $ \textit{arbitrary}, that it follows from \eqref{28} and
\eqref{n36} that
\begin{equation}\label{n37}
  \sum^{n-1}_{m=1} \frac{v_l (n)}{v_k (m)} |e_{nm} (\lambda) | \ge \frac{c}{e^{l \al_n}} \sum^{n-1}_{m=1} \frac{e^{\al_m}}{m}   \ge
 \frac{c e^{ \al_1}}{e^{l \al_n}} \sum^{n-1}_{m=1} \frac 1m  \ge \frac{c \log (n)}{e^{l \al_n}} ,
\end{equation}
for all $n \ge 2 $. Suppose now that $\lambda \in \rho (\sC; E_\al)$. Then for $k=1$ there exists $l \in \N $ with $l > 1$ such that
\eqref{n35} is satisfied. It then follows from \eqref{n37} that $ \sup_{n \in \N } \frac{\log (n)}{e^{l \al_n}} < \infty $. So, there exists
$K> 1$ such that $\log (n) \le K e^{l \al_n}$, equivalently that
$$
\log (\log (n)) \le l \al_n + \log (K) , \qquad n \ge 3  .
$$
A rearrangement yields $\frac{\log (\log (n))}{\al_n} \le l + \frac{\log (K)}{\al_n}$ for $n \ge 3 $, and so $\sup_{n \in \N } \frac{\log (\log (n))}{\al_n}
< \infty $; contradiction! So, \textit{no}  $\lambda \in \partial D(1) \s \{0,1\}$ exists which satisfies $\lambda \in \rho (\sC; E_\al)  $, i.e.,
$\partial D(1) \s \{0,1\} \su \si (\sC; E_\al) $. It now follows from Proposition \ref{p33} that $\si (\sC; E_\al) = \ov{D(1)}$.

It was observed in the proof of part (i) that $\bigcup^\infty_{k=1} \si (\sC_k; c_0 (v_k)) \su \ov{D(1)} $. Since $\ov{D(1)} = \si (\sC; E_\al) =
\ov{\si (\sC; E_\al)} $, it again follows from Lemma \ref{Le-spectrumNV}(iii) in the Appendix that $\si^* (\sC; E_\al) = \si (\sC; E_\al)  $.
\end{proof}

\begin{remark} \rm

(i) Let $\al $ satisfy $\al_n \uparrow \infty  $. Then $\si (\sC ; E_\al)$ is a compact subset of $\C$ if and only if $\sup_{n \in \N }
\frac{\log (\log (n))}{\al_n} = \infty$. This follows from Corollary \ref{C210}, Proposition \ref{pr34} and the fact that the
 condition $\sup_{n \in \N } \frac{\log (\log (n))}{\al_n} = \infty $ implies
$\sup_{n \in \N } \frac{\log (n)}{\al_n} = \infty $, i.e., $E_\al$ is automatically non-nuclear.

(ii) The sequence $\al_n := \log (\log (n))$ for
$n \ge 3^3 > e^e $ (with $1< \al_1 < \ldots < \al_{26} < \log (\log(3^3))$ arbitrary) satisfies $1< \al_n \uparrow \infty $ with $E_\al$ not
nuclear and $\sup_{n \in \N } \frac{\log (\log (n))}{\al_n} < \infty $. Proposition \ref{pr34}(i) shows that $\si (\sC; E_\al)= \{0,1\} \cup D(1)$.
On the other hand, the sequence $\al_n : = \log (\log (\log(n)))$ for $n \ge 3^{27} > e^{e^e}$ (with $1< \al_1 < \ldots < \al_{3^{27}-1} <
\log (\log (\log (3^{27})))$ arbitrary) satisfies $1 < \al_n \uparrow \infty $ with $E_\al$ not nuclear and
$\sup_{n \in \N } \frac{\log (\log (n))}{\al_n} = \infty $. In this case Proposition \ref{pr34}(ii) reveals that $\si (\sC; E_\al) = \ov{D(1)}$.
\end{remark}

\section{ Mean ergodicity of the Ces\`aro operator. }

An operator $T \in \cL (X)$, with $X$ a lcHs, is \textit{power bounded }if $\{T^n\}^\infty_{n=1}$ is an equicontinuous subset of $\cL (X)$.
Given $T \in \cL (X)$, the averages
$$
T_{[n]} := \frac 1 n \sum^n_{m=1} T^m , \qquad n  \in \N ,
$$
are called the Cesàro means of $T$. The operator $T$ is said to be \textit{mean ergodic} (resp. \textit{uniformly mean ergodic}) if
$\{T_{[n]}\}^\infty_{n=1}$ is a convergent sequence in $\cL_s (X)$ (resp., in $\cL_b (X)$). A relevant text for mean ergodic operators is \cite{K}.

\begin{prop}\label{p.uniforE}
Let $ \al_n \uparrow \infty $. The  Ces\`aro operator $\sC \in \cL (E_\al )$ is power bounded and uniformly mean ergodic.  In particular,
\begin{equation}\label{31}
    E_\alpha=\Ker (I-\sC)\oplus \ov{(I-\sC)(E_\alpha)}
\end{equation}
with
\begin{equation}\label{32}
    \Ker (I-\sC)=\{\boldsymbol{1}\}  \mbox{ and }  \ov{(I-\sC)(E_\alpha)} =\{x\in E_\alpha\colon x_1=0\}=\ov{{\rm span}\{e_n\}_{n\geq 2}}  .
\end{equation}
\end{prop}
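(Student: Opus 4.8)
The plan is to deal with the four assertions in order: first power boundedness, then the two identifications in \eqref{32}, then the topological splitting \eqref{31}, and finally mean ergodicity together with its uniform version.

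\emph{Power boundedness.} Since each $v_k$ is decreasing, for $x\in c_0(v_k)$ one has $v_k(n)\,\big|\frac1n\sum_{j=1}^n x_j\big|\le\frac1n\sum_{j=1}^n v_k(j)|x_j|\le q_k(x)$, so $\sC$ is a contraction on every step $c_0(v_k)$ (this is already implicit in \cite[Corollary 2.3(i)]{ABR-9}), and hence $q_k(\sC^m x)\le q_k(x)$ for all $k,m$ and all $x\in c_0(v_k)$. By the general theory of (LB)-spaces (exactly as in the proof of Proposition \ref{C-Cont}) the family $\{\sC^m\}_{m\in\N}$ is then equicontinuous in $\cL(E_\al)$; that is, $\sC$ is power bounded. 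Consequently $\{\sC^m x\}_m$ is bounded in $E_\al$ for each $x$, and the C\`esaro means $\sC_{[n]}$, lying in the convex hull of $\{\sC^m:m\in\N\}$, also form an equicontinuous family in $\cL(E_\al)$.

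\emph{Kernel and closed range.} The eigenvalue description recalled just before Proposition \ref{P.spP} gives $\Ker(I-\sC;\C^\N)=\C\boldsymbol1$ (the eigenvalue $1=\tfrac11$, with eigenvector $\Delta e_1=\boldsymbol1$, has multiplicity one), and since $\boldsymbol1\in c_0(v_1)\su E_\al$ this yields $\Ker(I-\sC;E_\al)=\C\boldsymbol1$. Next, $(I-\sC)(E_\al)\su Y:=\{x\in E_\al:x_1=0\}$, which is the closed hyperplane $\la\,\cdot\,,e_1\ra=0$, and $\boldsymbol1\notin Y$. To obtain the reverse inclusion for the closure I pass to the dual operator $\sC'\in\cL(E_\al')$, given by $\sC'y=\big(\sum_{j\ge n}y_j/j\big)_{n\in\N}$ (the formula used in the proof of Proposition \ref{p32}). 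The equation $\sC'\phi=\phi$ forces $\phi_{n+1}=(1-\tfrac1n)\phi_n$ for every $n\in\N$, hence $\phi_2=0$ and therefore $\phi_n=0$ for all $n\ge2$; thus $\Ker(I-\sC';E_\al')=\C e_1$. Since $\big((I-\sC)(E_\al)\big)^{\perp}=\Ker\big((I-\sC)';E_\al'\big)=\Ker(I-\sC';E_\al')=\C e_1$, the bipolar theorem gives $\ov{(I-\sC)(E_\al)}=\{x\in E_\al:\la x,e_1\ra=0\}=Y$. Finally $Y=\ov{\mathrm{span}}\{e_n\}_{n\ge2}$: $\vp$ is dense in $E_\al$ and $x\mapsto x_1$ is continuous, so any $x\in Y$ is the limit of the vectors $x^{(\nu)}-x^{(\nu)}_1 e_1\in\mathrm{span}\{e_n\}_{n\ge2}$, where $x^{(\nu)}\in\vp$ and $x^{(\nu)}\to x$.

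\emph{Splitting and mean ergodicity.} Because $x\mapsto x_1$ is a continuous linear form whose kernel $Y$ does not contain $\boldsymbol1$, we have the topological direct sum $E_\al=\C\boldsymbol1\oplus Y$ with continuous projection $P:x\mapsto x_1\boldsymbol1$; by the previous paragraph this is precisely \eqref{31}--\eqref{32}, with $\Ker(I-\sC)=\mathrm{Im}\,P=\C\boldsymbol1$ and $\ov{(I-\sC)(E_\al)}=\Ker P=Y$. To prove mean ergodicity I show $\sC_{[n]}\to P$ in $\cL_s(E_\al)$. On $\C\boldsymbol1$ this is trivial, since $\sC\boldsymbol1=\boldsymbol1$ gives $\sC_{[n]}(c\boldsymbol1)=c\boldsymbol1$. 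On $Y=\ov{(I-\sC)(E_\al)}$: for $y=(I-\sC)w$ a telescoping sum gives $\sC_{[n]}y=\frac1n(\sC w-\sC^{\,n+1}w)\to0$ by power boundedness, and for an arbitrary $y\in Y$ one combines this with the density of $(I-\sC)(E_\al)$ in $Y$ and the equicontinuity of $\{\sC_{[n]}\}$ to get $\sC_{[n]}y\to0$. Writing $x=x_1\boldsymbol1+(x-x_1\boldsymbol1)$ then yields $\sC_{[n]}x\to x_1\boldsymbol1=Px$.

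\emph{Uniform mean ergodicity.} Here I use that $E_\al$ is a (DFS)-space, hence Montel, so its bounded subsets are precompact. By the standard fact that on an equicontinuous subset of $\cL(E_\al)$ the topology $\tau_s$ coincides with uniform convergence on precompact subsets of $E_\al$, the topologies $\tau_s$ and $\tau_b$ agree on the equicontinuous set $H:=\{\sC_{[n]}:n\in\N\}\cup\{P\}$. Since $\sC_{[n]}\to P$ in $\tau_s$ and $\sC_{[n]},P\in H$, the convergence holds in $\cL_b(E_\al)$ as well, i.e. $\sC$ is uniformly mean ergodic. I expect the delicate point to be the identification $\ov{(I-\sC)(E_\al)}=\{x_1=0\}$ in the second paragraph: in the non-nuclear case the eigenvectors $\Delta e_n$ ($n\ge2$) of $I-\sC$ need not lie in $E_\al$, so one cannot simply exhibit preimages, and the passage to $\sC'$ together with the clean computation $\Ker(I-\sC';E_\al')=\C e_1$ appears to be the efficient route; the contraction estimate, the topological splitting and the Montel upgrade from $\tau_s$ to $\tau_b$ are routine.
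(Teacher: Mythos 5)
Your proposal is correct; every step checks out (the contraction estimate on each step $c_0(v_k)$, the computation $\Ker(I-\sC';E_\al')=\C e_1$ from the recursion $\phi_{n+1}=(1-\frac1n)\phi_n$, the bipolar identification of $\ov{(I-\sC)(E_\al)}$, the telescoping identity $\sC_{[n]}(I-\sC)w=\frac1n(\sC w-\sC^{n+1}w)$, and the Montel upgrade from $\tau_s$ to $\tau_b$ on an equicontinuous set). The route, however, differs from the paper's in two respects. First, for power boundedness the paper does exactly what you do, except that it passes from the stepwise estimates $q_k(\sC^m x)\le q_k(x)$ to equicontinuity of $\{\sC^m\}$ via the regularity of $E_\al=\ind_k c_0(v_k)$ (Lemma \ref{Lereg} of the Appendix); your appeal to ``general theory of (LB)-spaces'' is really this lemma, and it is worth noting that plain stepwise boundedness of a family of operators does not give equicontinuity on an arbitrary (LB)-space --- regularity is what makes it work here. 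Second, and more substantially, the paper deduces uniform mean ergodicity and the splitting \eqref{31} by citing general results for power bounded operators on Montel spaces (\cite[Proposition 2.8 and Theorem 2.4]{ABR-0}), and obtains \eqref{32} by the Schauder-basis ``algebraic'' argument of \cite[Proposition 4.1]{ABR-7}, i.e.\ by working with the coordinates of $(I-\sC)x$ directly. You instead prove everything from scratch: the duality/bipolar computation replaces the basis argument for $\ov{(I-\sC)(E_\al)}=\{x\colon x_1=0\}$, and the classical telescoping-plus-density-plus-equicontinuity argument replaces the citation of the abstract mean ergodic theorem. What your approach buys is self-containment and a transparent reason why the ergodic projection is $x\mapsto x_1\boldsymbol{1}$; what the paper's approach buys is brevity and the fact that \eqref{31} comes for free from the general theorem rather than having to be assembled from the hyperplane decomposition. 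Both are complete proofs; your worry that the duality route is the ``delicate point'' is unfounded --- it is clean precisely because $e_1\in\vp\su E_\al'$ and the range of $I-\sC$ is a subspace, so its closure is its bipolar.
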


\begin{proof} Since each  weight $v_k$ for $k \in \N $ is decreasing, it is known that    $\sC\in\cL(c_0(v_k))$
and $q_k(\sC x)\leq q_k(x)$ for all $x\in c_0(v_k)$, \cite[Corollary 2.3(i)]{ABR-9}. It follows, via \eqref{21},  for every $k  \in\N$ that
$$
 q_k(\sC^m x)\leq q_k(x),  \qquad  \forall  x\in c_0(v_k), \;  m \in \N .
$$
Accordingly, for each $k \in \N $,   \eqref{eq.Eq} is satisfied with $l : =k$ and $D=1 $. Then Lemma \ref{Lereg} in the Appendix implies  that
$\cH:=\{\sC^m\colon m\in\N\} \su \cL (E_\al )$ is equicontinuous, i.e.,  the Ces\`aro operator $\sC$ is  power bounded in  $E_\alpha$.
Since $E_\alpha$ is  Montel,  it follows via \cite[Proposition 2.8]{ABR-0} that the Ces\`aro operator $\sC$ is uniformly
mean ergodic in $E_\alpha$ and hence, \eqref{31} is also satisfied, \cite[Theorem 2.4]{ABR-0}. The facts that each $x \in E_\al$ belongs to
$c_0 (v_k)$ for some $k \in \N $, that the inclusion $c_0 (v_k) \su E_\al$ is continuous and that the canonical vectors $e_n :=
(\delta_{nk})_{k \in \N }$, for $n \in \N $, form a Schauder basis in $c_0 (v_k)$ implies  $\{e_n : n \in \N \}$ is a Schauder basis for
$E_\al$.
The proof of the identities in \eqref{32}  now follow by  applying the same (algebraic)  arguments as used in  the proof of \cite[Proposition 4.1]{ABR-7}.
\end{proof}

\begin{prop}\label{p.iterE}
Let $ \al_n \uparrow \infty $. The sequence $\{\sC^m\}_{m\in\N}$ converges in $\cL_b (E_\alpha)$ to the projection onto
$\mathrm{span} \{\boldsymbol{1}\}  $ along $\ov{(I- \sC) (E_\al)}$.
\end{prop}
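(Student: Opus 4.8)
The plan is to identify the limit operator explicitly and then reduce the convergence to a single estimate at the Banach steps $c_0(v_k)$. Define $P\colon E_\al\to E_\al$ by $Px:=x_1\boldsymbol{1}$. Since each $v_k$ is decreasing, $q_k(Px)=|x_1|v_k(1)\le q_k(x)$, so $P\in\cL(c_0(v_k))$ for every $k$ and hence $P\in\cL(E_\al)$; clearly $P^2=P$, $P(E_\al)=\mathrm{span}\{\boldsymbol{1}\}$ and $\Ker P=\{x\in E_\al\colon x_1=0\}$, so by \eqref{31} and \eqref{32} the operator $P$ is precisely the projection onto $\Ker(I-\sC)=\mathrm{span}\{\boldsymbol{1}\}$ along $\ov{(I-\sC)(E_\al)}$ named in the statement. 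Thus it suffices to show $\sC^m\to P$ in $\cL_b(E_\al)$. Here I would use that $E_\al=\ind_k c_0(v_k)$ is regular (noted before Proposition \ref{P.spectrum}): every $B\in\cB(E_\al)$ is contained and bounded in some $c_0(v_k)$, and, since the inclusion $c_0(v_{k+1})\hookrightarrow E_\al$ is continuous, every continuous seminorm $q$ on $E_\al$ satisfies $q\le Cq_{k+1}$ on $c_0(v_{k+1})$ for some $C=C(q,k)>0$. So it is enough to prove, for each fixed $k\in\N$, that
\[
 \sup\{\,q_{k+1}(\sC^mx-Px)\colon x\in c_0(v_k),\ q_k(x)\le 1\,\}\longrightarrow 0\qquad(m\to\infty).
\]
For the remainder I would take the weights $v_k(n):=e^{-k\al_n}$.

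First I would reduce to the ``kernel part''. Given $x\in c_0(v_k)$, write $x=x_1\boldsymbol{1}+w$ with $w:=x-x_1\boldsymbol{1}\in c_0(v_k)$, $w_1=0$ and $q_k(w)\le 2q_k(x)$. Because $\sC\boldsymbol{1}=\boldsymbol{1}$ we have $\sC^m\boldsymbol{1}=\boldsymbol{1}$, and since $Px=x_1\boldsymbol{1}$ this gives $\sC^mx-Px=\sC^mw\in c_0(v_k)$; hence it suffices to estimate $q_{k+1}(\sC^mw)$ uniformly over $w\in c_0(v_k)$ with $w_1=0$ and $q_k(w)\le 1$. Now $\sC$ has nonnegative entries and $\sC\boldsymbol{1}=\boldsymbol{1}$, and the same is therefore true of each power $\sC^m$, which (being lower triangular) has $n$-th row satisfying $(\sC^m)_{nj}\ge 0$ for $1\le j\le n$ and $\sum_{j=1}^n(\sC^m)_{nj}=1$. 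For $w$ as above, $|w_j|\le e^{k\al_j}\le e^{k\al_n}$ for $1\le j\le n$, so, using $w_1=0$,
\[
 |(\sC^mw)_n|=\Bigl|\sum_{j=2}^n(\sC^m)_{nj}w_j\Bigr|\le e^{k\al_n}\sum_{j=2}^n(\sC^m)_{nj}=e^{k\al_n}\bigl(1-(\sC^m)_{n1}\bigr),
\]
whence $v_{k+1}(n)\,|(\sC^mw)_n|\le e^{-\al_n}\bigl(1-(\sC^m)_{n1}\bigr)$ for every $n\in\N$.

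The crucial ingredient is then the coordinatewise limit $(\sC^m)_{n1}\to 1$ as $m\to\infty$, for each fixed $n$. I would obtain this from the diagonalization $\sC=\Delta\,\mathrm{diag}((1/n))\,\Delta$ in $\cL(\C^\N)$ with $\Delta=\Delta^{-1}$ (see the discussion before Proposition \ref{P.spP}): then $\sC^m=\Delta\,\mathrm{diag}((1/n^m))\,\Delta$, and since $\Delta e_1=\boldsymbol{1}$ while $\mathrm{diag}((1/n^m))\boldsymbol{1}=(1/n^m)_{n\in\N}$, one gets $\sC^m e_1=\Delta\,(1/n^m)_{n\in\N}$; as $(1/n^m)_{n\in\N}\to e_1$ in $\C^\N$ and $\Delta\in\cL(\C^\N)$, it follows that $\sC^m e_1\to\Delta e_1=\boldsymbol{1}$ in $\C^\N$, i.e. $(\sC^m)_{n1}=(\sC^m e_1)_n\to 1$. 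To finish, I would fix $\ve>0$; since $\al_n\uparrow\infty$ there is $N$ with $e^{-\al_n}<\ve$ for all $n>N$. For $n>N$ we get $v_{k+1}(n)|(\sC^mw)_n|\le\ve$ (using $1-(\sC^m)_{n1}\le 1$); for $2\le n\le N$ we get $v_{k+1}(n)|(\sC^mw)_n|\le\eta_m:=\max_{2\le j\le N}\bigl(1-(\sC^m)_{j1}\bigr)$ (using $e^{-\al_n}<1$); and $(\sC^mw)_1=0$. Hence $q_{k+1}(\sC^mw)\le\ve+\eta_m$ for \emph{all} such $w$ and all $m$, and $\eta_m\to 0$ by the preceding remark. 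Letting $m\to\infty$ and then $\ve\to 0$ gives the displayed claim, hence the proposition. The hard part is exactly the isolation and proof of the limit $(\sC^m)_{n1}\to 1$; once that is in hand, the rest is bookkeeping with the weights $e^{-k\al_n}$ and the regularity of $E_\al$.
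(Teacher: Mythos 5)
Your proof is correct, but it follows a genuinely different route from the paper's. The paper first proves convergence only in $\cL_s(E_\al)$: it decomposes $x=y+z$ according to \eqref{31}, quotes an estimate from the proof of \cite[Proposition 3.2]{ASM} to get $q_1(\sC^m e_r)\le \frac{1}{r-1}a_m$ with $a_m\to 0$ for each $r\ge 2$, combines this with the equicontinuity of $\{\sC^m\}$ and the density of $\mathrm{span}\{e_n\}_{n\ge 2}$ in $\ov{(I-\sC)(E_\al)}$ to obtain $\sC^m z\to 0$, and then upgrades to $\cL_b(E_\al)$ via the Montel property of $E_\al$. You instead prove the $\cL_b$-convergence directly and quantitatively: after identifying the projection as $Px=x_1\boldsymbol{1}$, you exploit that each $\sC^m$ is a nonnegative lower triangular matrix with unit row sums to get $v_{k+1}(n)|(\sC^m w)_n|\le e^{-\al_n}\bigl(1-(\sC^m)_{n1}\bigr)$ uniformly on the unit ball of $c_0(v_k)$ intersected with $\{w_1=0\}$, split the supremum over $n$ into a tail (killed by the extra weight $e^{-\al_n}$ gained in passing from $q_k$ to $q_{k+1}$, uniformly in $m$) and finitely many coordinates (killed by the limit $(\sC^m)_{n1}\to 1$, which you extract cleanly from the diagonalization $\sC^m=\Delta\,\mathrm{diag}((1/n^m))\,\Delta$), and finish with the regularity of $E_\al=\ind_k c_0(v_k)$. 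Your argument is self-contained (it needs neither the Montel property nor the external estimate from \cite{ASM}) and isolates the single coordinatewise limit $(\sC^m)_{n1}\to 1$ as the only nontrivial input; the paper's argument is shorter on the page because it reuses existing machinery, but it is less explicit about the rate and mechanism of the uniform convergence. All the individual steps you take (the bound $q_k(w)\le 2q_k(x)$, the row-stochasticity of $\sC^m$, the continuity of $\Delta$ on $\C^\N$, and the use of regularity to reduce bounded sets to balls of the steps) check out.
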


\begin{proof}
 Using Proposition \ref{p.uniforE} we proceed as in the proof of the analogous result when $\sC$ acts in the Frèchet space
  $\Lambda_0(\alpha)$, \cite[Proposition 3.2]{ASM}. Indeed, for each $x\in E_\alpha$, we have that $x=y+z$ with
  $y\in \Ker (I-\sC)={\rm span}\{\boldsymbol{1}\}$ and $z\in \ov{(I-\sC)(E_\alpha)}=\ov{{\rm span}\{e_n\}_{n\geq 2}}$.
  So, for each $m\in\N$ we have $\sC^mx=\sC^m y+\sC^mz$, with $\sC^m y=y\to y$ in $E_\alpha$ as $m\to\infty$.
The   claim is  that the sequence $\{\sC^mz\}_{m\in\N}$ is also convergent in $E_\alpha$.
Indeed, proceeding as in the proof of Proposition 3.2 of \cite{ASM}  one shows,    for each  $r\geq 2$ and $m,\ n\in\N$,   that
$|(\sC^me_r)(n)|\leq \frac{1}{r-1}a_m$, where $(a_m)_{m \in \N }$ is a sequence of positive numbers satisfying $\lim_{m \to \infty } a_m = 0$. Since
$  v_1(n)|(\sC^me_r)(n)|\leq \frac{v_1(n)}{r-1}a_m$,   for each $r\geq 2$ and $ n, m\in\N$, with $1 \ge v_1 (1) \ge v_1 (n)$ for all $n \in \N$ it follows that
$q_1(\sC^me_r)\leq \frac{1}{r-1}a_m$.  We deduce,  for each $r\geq 2$,  that $\sC^me_r\to 0$ in $c_0(v_1)$ and hence, also in $E_\alpha$ as $m\to\infty$.
Since  $\{\sC^m\}_{m\in\N}  \su \cL (E_\al )$ is equicontinuous  and (by \eqref{32}) the linear span of $\{e_n\}_{n\geq 2}$ is dense in $\ov{(I-\sC)(E_\alpha)}$,
it follows that $\sC^mz\to 0$ in $E_\alpha$ as $m\to\infty$ for each $z\in \ov{(I-\sC)(E_\alpha)}$.   So, it has been shown that
$\sC^mx=\sC^my+\sC^mz\to y$ in  $E_\alpha$ as $m\to\infty$, for each    $x\in E_\alpha$, i.e.,  $\{\sC^m\}_{m\in\N}$ converges in $\cL_s(E_\alpha)$.
Since  $E_\alpha$ is a Montel space,  $\{\sC^m\}_{m \in \N }$ also converges  in $\cL_b(E_\alpha)$.
\end{proof}

\begin{prop}\label{p.rangeE}
Let $ \al_n \uparrow \infty $ with  $E_\alpha$ nuclear. Then the range  $(I-\sC)^m(E_\alpha)$ is a closed subspace of $E_\al $ for
each $m \in \N $.
\end{prop}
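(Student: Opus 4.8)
The plan is to prove the stronger (and cleaner) statement that
$$ (I-\sC)^m(E_\al) = Y := \{x \in E_\al : x_1 = 0\} \qquad (m \in \N), $$
where $Y$ is a closed subspace of $E_\al$, being equal to $\ov{(I-\sC)(E_\al)}$ by \eqref{32}. One inclusion is immediate: since $(\sC x)_1 = x_1$, we have $((I-\sC)x)_1 = 0$ for every $x \in E_\al$, hence $(I-\sC)^m(E_\al) \su Y$ for all $m$. The real content is the reverse inclusion, which I would obtain by exhibiting an explicit continuous right inverse of $I-\sC$ on $Y$.

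\emph{Step 1 (a right inverse, algebraically).} For $y \in \C^\N$ with $y_1 = 0$, the equation $(I-\sC)x = y$ is automatic at $n=1$, so we may set $x_1 := 0$; writing $s_n := x_1 + \dots + x_n$, it then reduces to the recursion $s_n\,\tfrac{n-1}{n} = s_{n-1} + y_n$ for $n \ge 2$, whose solution (with $s_1 = 0$) is $s_n = n\sum_{k=2}^n \tfrac{y_k}{k-1}$. This forces the candidate
$$ (Ry)_n := y_n + \sum_{k=2}^n \frac{y_k}{k-1}, \qquad n \in \N . $$
A short induction (verifying $s_n = n\sum_{k=2}^n \tfrac{y_k}{k-1}$) confirms $(I-\sC)(Ry) = y$ for every $y \in Y$; moreover $(Ry)_1 = y_1$, so $R(Y) \su Y$ once we know that $R$ acts in $E_\al$.

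\emph{Step 2 (continuity of $R$: here nuclearity enters).} Write $R = I + L$ with $(Ly)_n := \sum_{k=2}^n \tfrac{y_k}{k-1}$. It suffices to show that for each $j \in \N$ there is $l \ge j$ with $L : c_0(v_j) \to c_0(v_l)$ continuous, where $v_r(n) = e^{-r\al_n}$. For $y \in c_0(v_j)$, using $|y_k| \le q_j(y)/v_j(k)$, the monotonicity of $\al$, and $k-1 \ge 1$,
$$ v_l(n)\,|(Ly)_n| \le q_j(y)\, v_l(n) \sum_{k=2}^n e^{j\al_k} \le q_j(y)\, n\, e^{(j-l)\al_n} . $$
By Lemma \ref{L.s1} applied with $\g = 2$, nuclearity of $E_\al$ provides $t \in \N$ with $\sup_{m} m^2 e^{-t\al_m} < \infty$; choosing $l := j+t$ bounds the right-hand side above by $q_j(y)\, n^{-1}\sup_m m^2 e^{-t\al_m}$, which is bounded in $n$ and tends to $0$. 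Hence $Ly \in c_0(v_l)$ with $q_l(Ly) \le C\, q_j(y)$, so $L \in \cL(E_\al)$, whence $R \in \cL(E_\al)$ and $R(Y) \su Y$.

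\emph{Step 3 (conclusion).} Since $R(Y) \su Y$ and $(I-\sC)(Ry) = y$ for $y \in Y$, iteration gives $(I-\sC)^m(R^m y) = y$ for all $y \in Y$ and all $m$, so $Y = (I-\sC)^m(R^m(Y)) \su (I-\sC)^m(E_\al)$; together with the trivial inclusion of the first paragraph this yields $(I-\sC)^m(E_\al) = Y$, a closed subspace, for every $m \in \N$. (Equivalently, once $(I-\sC)(E_\al) = Y$ is known for $m=1$, the decomposition $E_\al = \mathrm{span}\{\boldsymbol{1}\} \oplus Y$ of Proposition \ref{p.uniforE} together with $(I-\sC)\boldsymbol{1} = 0$ gives $(I-\sC)(Y) = (I-\sC)(E_\al) = Y$, and one finishes by induction on $m$.) I expect the single real obstacle to be the estimate in Step 2: it is precisely where nuclearity is indispensable, and indeed for non-nuclear $E_\al$ the operator $\sC$ has the drastically different spectrum of Proposition \ref{pr34} and the summation operator $L$ fails to be continuous on $E_\al$.
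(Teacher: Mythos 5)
Your proof is correct, and at its core it is the same mechanism as the paper's: both arguments come down to writing an explicit (lower triangular, summation-type) inverse for $I-\sC$ restricted to the hyperplane $Y=\{x\in E_\al : x_1=0\}$ and proving its continuity from the nuclearity condition $\sup_n\frac{\log n}{\al_n}<\infty$ via exactly the estimate $\sum_{k=2}^n e^{j\al_k}\le n\,e^{j\al_n}$ followed by Lemma \ref{L.s1}. Indeed, after conjugating by the left shift your operator $R$ coincides with the matrix $B$ appearing in the paper. The packaging, however, is genuinely leaner: the paper first proves $(I-\sC)(E_\al)=(I-\sC)(X(\al))$ stepwise using results from \cite{ABR-9}, identifies $X(\al)$ with the shifted co-echelon space $E(\tilde\al)$ through a lemma on finite-codimensional subspaces of (LB)-spaces, and then treats $m\ge 2$ by quoting the argument of Remark 3.6 in \cite{ABR-7} together with the decomposition \eqref{31}; you instead define $R=I+L$ globally on $E_\al$, observe $R(Y)\su Y$ and $(I-\sC)R=I$ on $Y$, and obtain the stronger conclusion $(I-\sC)^m(E_\al)=Y$ for \emph{all} $m$ at once by iterating the right inverse. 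This removes the shift-conjugation, the (LB)-subspace lemma, and the external reference for higher powers, at the small price of the one routine induction verifying $s_n=n\sum_{k=2}^n\frac{y_k}{k-1}$. Both routes place the use of nuclearity in exactly the same spot, so your closing remark about where the hypothesis is indispensable is also accurate.
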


\begin{proof}
Consider first $m=1$. Set $X(\alpha):=\{x\in E_\alpha\colon x_1=0\}$. The claim is that
\begin{equation}\label{33}
    (I-\sC)(E_\alpha) = (I-\sC) (X(\alpha)) .
\end{equation}
First recall that each sequence $v_k$, for $k \in \N $,  is strictly positive and  decreasing  with $v_k \in c_0 $ and so
$\ov{(I-\sC)(c_0(v_k))}=\{x\in c_0(v_k)\colon x_1=0\}=:X_k$ and $(I-\sC)(X_k)=(I-\sC)(c_0(v_k))$,
  \cite[Lemmas 4.1 and 4.5]{ABR-9}.  Now,   if $x\in X(\alpha)$, then  $x\in X_k$ for some $k\in\N$ and hence,
$$
(I-\sC)x\in (I-\sC)(X_k)=(I-\sC)(c_0(v_k))\su (I-\sC)(E_\alpha).
$$
This establishes one inclusion in \eqref{33}. For the reverse inclusion
let $x\in E_\alpha$. Then $x\in c_0(v_k)$ for some $k\in\N$ and hence, $(I-\sC)x\in (I-\sC)(c_0(v_k))=(I-\sC)(X_k)\su (I-\sC)(X(\alpha))$.
Thus,  the reverse inclusion in \eqref{33} is also valid.

 Because of \eqref{33} and the containment
 $(I-\sC)(E_\alpha)\su  \ov{(I-\sC) (E_\al)} =  X(\alpha)$, which is immediate from Proposition \ref{p.uniforE},  to show that
  $(I-\sC)(E_\alpha)$ is closed in $E_\alpha$ it suffices to show that the continuous linear restriction operator
  $(I-\sC)|_{X(\alpha)}\colon X_\alpha\to X_\alpha$ is bijective, actually surjective.   Indeed, if $(I-\sC)(X(\alpha))=X(\alpha)$,
then $(I-\sC)(E_\alpha)=X(\alpha)$ by \eqref{33} and hence, $(I-\sC)(E_\alpha)$ is a closed subspace of $E_\alpha$.

To establish that $(I- \sC)|_{X_\al}$ is bijective we require the identity $(X(\alpha), \tau)=\ind_k X_k $, where $\tau$ is the relative
topology in $X(\alpha)$ induced from $E_\al$. This identity follows from the general fact that if $ (E, \tilde{\tau}) = \ind_n E_n$ is a
(LB)-space and $F \su E $ is a closed subspace with finite codimension, then $(F, \tilde{\tau}|_F) = \ind_n (F \cap E_n)$ is also a
(LB)-space, \cite[Lemma 6.3.1]{PB}.
Actually,  setting $\tilde{v}_k(n):=v_k(n+1)$ for all $k,\ n\in\N$, we have that $X(\alpha)$ is topologically  isomorphic to $E(\tilde{\alpha}):=\ind_k c_0(\tilde{v}_k)$.
Indeed, the left-shift operator $S\colon X(\alpha)\to E(\tilde{\alpha})$ given by $S(x) : =(x_2,x_3,\ldots)$ for $x=(x_n)_{n\in\N}\in X(\alpha)$
is such an isomorphism (because,  for each $k\in\N$,  the left shift operator $S\colon X_k\to c_0(v_k)$ is a surjective isometry). Consider now
the operator $A:=S\circ (I-\sC)|_{X(\alpha)}\circ S^{-1}\in \cL(E(\tilde{\alpha}))$.  The claim is that $A$ is bijective with $A^{-1}\in \cL(E(\tilde{\alpha}))$.

To establish the above  claim   observe, when interpreted to be acting  in the space $\C^\N$, that  the operator $A\colon \C^\N\to \C^\N$
is bijective (which is a routine verification) and its inverse $B:=A^{-1}\colon \C^\N\to \C^\N$ is determined by the lower triangular matrix
$B=(b_{nm})_{n, m\in \N }$ with entries given as follows: for each $n\in\N$ we have $b_{nm}=0$ if $m>n$, $b_{nm}=\frac{n+1}{n}$ if $m=n$ and
$b_{nm}=\frac{1}{m}$ if $1\leq m<n$.  To show   that $B$ is also the inverse of $A$ acting on $E(\tilde{\alpha})$, we only need to verify   that
$B\in \cL(E(\tilde{\alpha}))$. To establish this  it suffices to show,   for each $k\in\N$, that  there exists $l\geq k$ such that
$\Phi_{\tilde{v}_l}\circ B\circ \Phi^{-1}_{\tilde{v}_k}\in \cL(c_0)$, where for each $h\in\N$ the operator $\Phi_{\tilde{v}_h}\colon c_0(\tilde{v}_h)\to c_0$
given by $ \Phi_{\tilde{v}_h}(x)=(\tilde{v}_h(n+1)x_n)$ for $x\in c_0 (\tilde{v}_h)$ is a surjective  isometry. To  this end, given $k\in\N$
set  $l:=k+1$, say. Then  the lower triangular matrix corresponding to $\Phi_{\tilde{v}_l}\circ B\circ \Phi^{-1}_{\tilde{v}_k}$ is given by
$D:=(\frac{v_l(n+1)}{v_k(m+1)}b_{nm})_{n,m \in \N }$. Moreover, for each fixed $m \in \N $, we have
\[
\lim_{n\to\infty}\frac{v_l(n+1)}{v_k(m+1)}b_{nm}=\frac{1}{mv_k(m+1)}\lim_{n\to\infty}v_l(n+1)=0
\]
and, for each $ n \in \N$, that
\begin{eqnarray*}
& &\sum^\infty_{m=1}\frac{v_l(n+1)}{v_k(m+1)}b_{nm}=\frac{(n+1)}{n}\frac{v_l(n+1)}{v_k(n+1)}+v_l(n+1)\sum_{m=1}^{n-1}\frac{1}{mv_k(m+1)}\\
& &\qquad \leq 2+(s_l)^{-\alpha_{n+1}}\sum_{m=1}^{n-1}\frac{s_k^{\alpha_{m+1}}}{m}\leq 2+\left(\frac{s_k}{s_l}\right)^{\alpha_{n+1}}\sum_{m=1}^{n-1}\frac{1}{m}\\
& &\qquad \leq 2+\left(\frac{s_k}{s_l}\right)^{\alpha_{n+1}}(1+\log (n))  \le 2 + 2 a^{\al_{n+1}} \log (n+1),
\end{eqnarray*}
where $a : = \frac{s_k}{s_l} \in (0,1)$. Since $E_\al $ is nuclear, there exists $M \ge 1 $ such that $\log (n) \le M \al_n$ for all $n \in \N $
and hence, $a^{\al_n} \log (n) \le  M \al_n a^{\al_n}$ for $n \in \N $. Since $f (x):= x a^x$, for $x \in (0, \infty )$, satisfies $f' (x) < 0 $ for
$x >  \frac{1}{\log (\frac 1 a )}$, the function $f $ is decreasing on $( \frac{1}{\log (\frac 1 a )},  \infty)$ which implies  $\sup_{n \in \N } a^{\al_n} \log (n) < \infty$,
i.e., $\sum^\infty_{m = 1 } \frac{v_l (n+1)}{v_k (m+1)} < \infty $ for each $n \in \N $.  Thus,   both
 the conditions (i), (ii) of \cite[Lemma 2.1]{ABR-9} are satisfied. Accordingly, $\Phi_{\tilde{v}_l}\circ B\circ \Phi^{-1}_{\tilde{v}_k}\in \cL(c_0)$.
 The proof that $(I - \sC ) (E_\al)$ is closed is thereby complete.

Since $(I- \sC) (E_\al)$ is closed,  \eqref{31} implies
$E_\al = \Ker (I -\sC) \oplus (I - \sC) (E_\al)$. The proof of (2) $\Rightarrow$ (5) in Remark 3.6 of \cite{ABR-7} then shows that $(I-\sC)^m (E_\al)$ is
closed in $E_\al $ for all $m \in \N $.
\end{proof}

An operator $T \in \cL (X)$, with $X$ a separable lcHs, is called \textit{hypercyclic} if there exists $x \in X$ such that the orbit
$\{T^n x : n \in \N_0\}$ is dense in $X$. If, for some $z \in X$ the projective orbit $\{\lambda T^n z: n \in \N_0, \; \lambda \in \C\}$ is
dense in $X$, then $T$ is called \textit{supercyclic}. Clearly, hypercyclicity implies supercyclicity.

\begin{prop} \label{p34}
Let $\al$ satisfy $\al_n \uparrow \infty $. Then $\sC \in \cL (E_\al)$ is not supercyclic and hence, also not hypercyclic.
\end{prop}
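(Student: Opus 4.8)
The plan is to exploit the dual operator $\sC'$ of $\sC$ together with its eigenvectors. Recall from the proof of Proposition~\ref{p32} that, for every $\al$ with $\al_n\uparrow\infty$, the operator $\sC'\in\cL(E_\al')$ is given by $\sC'y=\big(\sum_{j\ge n}y_j/j\big)_{n\in\N}$ for $y\in E_\al'$, and that $\vp\su E_\al'$. A direct computation then yields $\sC'e_1=e_1$ and $\sC'(e_1-e_2)=\tfrac12(e_1-e_2)$, so that $e_1$ and $e_1-e_2$ are linearly independent eigenvectors of $\sC'$, for the distinct eigenvalues $1$ and $\tfrac12$. The underlying principle I would use is that an operator whose dual admits two linearly independent eigenvectors cannot be supercyclic; note that $E_\al$ is separable (it carries the Schauder basis $\{e_n\}$ of Proposition~\ref{p.uniforE}), so the notion applies.

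First I would assume, for a contradiction, that $z\in E_\al$ is a supercyclic vector for $\sC$, i.e.\ that $O:=\{\lambda\sC^nz:\lambda\in\C,\ n\in\N_0\}$ is dense in $E_\al$. Consider the linear map
\[
\Psi:E_\al\to\C^2,\qquad \Psi(x):=\big(\la x,e_1\ra,\ \la x,e_1-e_2\ra\big)=(x_1,\ x_1-x_2).
\]
Since $E_\al\su\C^\N$ continuously, $\Psi$ is continuous; it is surjective because already its restriction to $\mathrm{span}\{e_1,e_2\}\su E_\al$ maps onto $\C^2$. Hence $\Psi(O)$ is dense in $\C^2$. On the other hand, $\la\sC^nz,y\ra=\la z,(\sC')^ny\ra$ together with the eigenvalue relations above gives
\[
\Psi(\lambda\sC^nz)=\lambda\big(\la z,(\sC')^ne_1\ra,\ \la z,(\sC')^n(e_1-e_2)\ra\big)=\lambda\big(z_1,\ 2^{-n}(z_1-z_2)\big),\qquad \lambda\in\C,\ n\in\N_0.
\]

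It then remains to show that $S:=\Psi(O)=\{\lambda(z_1,\ 2^{-n}(z_1-z_2)):\lambda\in\C,\ n\in\N_0\}$ is \emph{not} dense in $\C^2$, which is the delicate step. If $z_1=0$ then $S\su\{0\}\times\C$, a proper closed subspace. If $z_1\neq0$, put $c:=(z_1-z_2)/z_1$ and let $w:=\lambda z_1$ range over $\C$; then $S=\{(w,\ c\,2^{-n}w):w\in\C,\ n\in\N_0\}$, and since $|c\,2^{-n}|\le|c|$ we obtain $S\su\{(w_1,w_2)\in\C^2:|w_2|\le|c|\,|w_1|\}$, a closed set not containing $(0,1)$ and hence proper. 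In either case $\Psi(O)$ lies in a proper closed subset of $\C^2$, contradicting its density. Therefore $\sC$ has no supercyclic vector, and a fortiori no hypercyclic vector.

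The hard part is exactly this last step: $\Psi(O)$ is a countable union of complex lines through the origin, and such a union need not be nowhere dense in $\C^2$, so one must exploit the particular geometry at hand — the ``slopes'' $2^{-n}(z_1-z_2)/z_1$ are uniformly bounded (indeed tend to $0$), which traps $S$ inside the closed cone $\{|w_2|\le|c|\,|w_1|\}$. Everything else is routine bookkeeping with the dual operator and its eigenvectors.
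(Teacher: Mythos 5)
Your proof is correct, but it follows a genuinely different route from the paper. The paper's argument is a two-line reduction: it invokes the known fact that $\sC$ is not supercyclic on $\C^\N$ (citing Proposition 4.3 of the reference on $\ell^{p+}$ and $L^{p-}$) and observes that, since $E_\al$ contains $\vp$ and is densely and continuously included in $\C^\N$, a dense projective orbit in $E_\al$ would remain dense in $\C^\N$ — contradiction. You instead give a direct, self-contained proof via the classical ``two eigenvectors of the transpose'' obstruction: you exhibit the eigenvectors $e_1$ and $e_1-e_2=u^{(2)}$ of $\sC'$ (for the eigenvalues $1$ and $\tfrac12$, exactly the vectors $u^{(m)}$ used in the proof of Proposition \ref{p32}), push the projective orbit through the continuous surjection $\Psi\colon E_\al\to\C^2$, and show its image is trapped in the proper closed cone $\{|w_2|\le|c|\,|w_1|\}$. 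All the steps check out: the computation $\sC'(e_1-e_2)=\tfrac12(e_1-e_2)$ is right, $\Psi$ maps dense sets to dense sets simply because $\C^2=\Psi(E_\al)=\Psi(\ov{O})\su\ov{\Psi(O)}$, and you correctly identify (and resolve) the only delicate point, namely that a countable union of complex lines through the origin can be dense in $\C^2$ unless the slopes are controlled — here they are bounded by $|c|$, which settles it (including the degenerate cases $z_1=0$ and $z_1=z_2$). What the paper's route buys is brevity at the cost of an external citation; what yours buys is a self-contained argument that only uses the formula for $\sC'$ already recorded in Section 3, and which in effect reproves the cited fact.
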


\begin{proof}
It is known that $\sC$ is \textit{not} supercyclic in $\C^\N $, \cite[Proposition 4.3]{ABR-1}. Since $E_\al$ is dense (as it contains $\vp$)
and continuously included in $\C^\N$, the supercyclicity of $\sC$ in any one of the spaces $E_\al$ would imply that $\sC \in \cL (\C^\N)$
is supercyclic.
\end{proof}

\section{Appendix}

In this section we elaborate  on the point raised in Section 1 that the behaviour of the Cesàro operator on the
strong dual $(\Lambda^1_0 (\al))'$ of power series  spaces $\Lambda^1_0 (\al)$ of \textit{finite type}, is not so
relevant in relation to continuity. It turns out that $\sC$ \textit{fails} to act in $(\Lambda^1_0 (\al))'$ for \textit{every} $\al$
with $\al_n \uparrow \infty $ such that $(\Lambda^1_0 (\al))'$ is nuclear. Moreover, there exist $\al_n \uparrow \infty $
such that $(\Lambda^1_0 (\al))'$ is not nuclear and $\sC \in \cL ((\Lambda^1_0 (\al))')$ (cf. Example \ref{ex52}) as well as
other $\al_n \uparrow \infty $ such that $(\Lambda^1_0 (\al))'$ is not nuclear and $\sC \notin \cL ((\Lambda^1_0 (\al))')$;
see Example \ref{ex53}.

In order to be able to formulate the above claims more precisely, let $(v_k)_{k \in \N }$ be a sequence of functions
$v_k : \N \to (0, \infty )$ satisfying $v_k (n) \uparrow_n \infty $, for each  $k \in \N $, with $v_k \ge v_{k+1}$ pointwise on $\N$
and $\lim_{n \to \infty } \frac{v_{k+1} (n)}{v_k (n)} =0$  for all $k \in \N $. Then $\ell_\infty (v_k) \su c_0 (v_{k+1})$ continuously
for each $k \in \N $ and so
$$
k_0 (V) := \ind_k c_0 (v_k) = \ind_k \ell_\infty (v_k)  .
$$
In the notation of Köthe echelon spaces $\lambda_1 (\frac 1 v ) := \proj_k \ell_1(\frac{1}{v_k}) $ is a Fréchet-Schwartz
space whose strong dual space, i.e., the co-echelon space $(\lambda_1 (\frac 1 v ))'_\beta = \ind_k \ell_\infty (v_k) = k_0 (V)$,
is a (DFS)-space. It is known that the regular (LB)-space $k_0 (V)$ is nuclear if and only if the Fréchet-Schwartz space
$\lambda_1 (\frac 1 v )$ is nuclear if and only if the Grothendieck-Pietsch criterion is satisfied: for every $k \in \N $ there exists
$l \in \N $ with $l > k$ such that the sequence $(\frac{v_l (n)}{v_k (n)})_{n \in \N } \in \ell_1$, \cite[Section 21.6]{Ja}. In case $v_k (n) := e^{\al_n /k}$,
for $k, n \in \N $, with $\al_n \uparrow \infty $, then $k_0 (V)$ is the strong dual of the finite type power series  space
(of order $1$) $\Lambda^1_0 (\al) := \proj_k \ell_1 (\frac{ 1}{ v_k} )$. This Fréchet space is nuclear if and only if $\lim_{n \to \infty }
\frac{\log (n)}{\al_n} =0$, \cite[Proposition 29.6]{MV}. Whenever this nuclearity condition is satisfied we have $\Lambda^1_0 (\al) =
\proj_j c_0 (\frac{1}{v_k})$ which is precisely the power series   space $\Lambda_0 (\al)$ in which the operator $\sC$ was investigated in \cite{ASM}.

For the rest of this section, whenever $\al_n \uparrow \infty $ we only consider the weights $v_k (n) := e^{\al_n/k}$ for $k, n \in \N $.

\begin{prop}
Let the sequence $\al_n$ satisfy $\al_n \uparrow \infty $ and $\lim_{n \to \infty } \frac{\log (n)}{\al_n} =0$. Then the Cesàro operator
$\sC$ does not act in $k_0 (V) = \ind_k c_0 (v_k)$.
\end{prop}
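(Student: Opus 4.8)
The plan is to produce a single vector of $k_0 (V)$ whose $\sC$-image escapes $k_0 (V)$, in the spirit of the elementary remark in Section 2 that $\sC$ fails to act in $\vp = \ind_k \C^k$ because $\sC e_1 = (\frac 1 n) \notin \vp$. The crucial feature here is that the weights $v_k (n) = e^{\al_n / k}$ increase in $n$ and, under the nuclearity hypothesis $\lim_n \frac{\log n}{\al_n} = 0$, do so fast enough to destroy membership of the sequence $(\frac 1 n)$.

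First I would note that every finitely supported sequence lies in each step $c_0 (v_k)$, so in particular the first canonical vector $e_1 = (1, 0, 0, \ldots)$ belongs to $k_0 (V)$; and by the definition \eqref{eq.Ce-op} one has $\sC e_1 = (\frac 1 n)_{n \in \N}$. Second, I would show that $(\frac 1 n)_{n \in \N} \notin k_0 (V)$. Since $k_0 (V) = \ind_k \ell_\infty (v_k)$, it suffices to check that $(\frac 1 n)_n \notin \ell_\infty (v_l)$ for every $l \in \N$, i.e. that $\sup_{n \in \N} \frac{v_l (n)}{n} = \sup_{n \in \N} \frac{e^{\al_n / l}}{n} = \infty$. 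Fixing $l$ and using $\lim_n \frac{\log n}{\al_n} = 0$, there is $N \in \N$ with $\log n \le \frac{\al_n}{2 l}$ for all $n \ge N$; hence $\frac{e^{\al_n / l}}{n} = \exp\!\big( \frac{\al_n}{l} - \log n \big) \ge \exp\!\big( \frac{\al_n}{2 l} \big) \to \infty$ because $\al_n \uparrow \infty$. As $l$ was arbitrary, $(\frac 1 n)_n$ lies in no step of $k_0 (V)$.

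Combining the two observations, $e_1 \in k_0 (V)$ while $\sC e_1 \notin k_0 (V)$, so $\sC (k_0 (V)) \not\su k_0 (V)$, i.e. $\sC$ does not act in $k_0 (V)$. I expect no genuine obstacle here; the only point deserving attention is the precise role of the nuclearity condition, which is exactly what forces $\frac{e^{\al_n / l}}{n}$ to be unbounded for every $l$. Without it — for instance when $\al_n$ is comparable to $\log n$ — one could have $\sC e_1 \in c_0 (v_l)$ for large $l$, and the conclusion would fail, consistent with the later Examples in this Appendix.
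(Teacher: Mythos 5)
Your proof is correct, and it reaches the conclusion by a slightly different and more elementary route than the paper. The paper argues by contradiction at the level of operators: assuming $\sC \in \cL (k_0 (V))$, it invokes the continuity criterion for $\sC : c_0 (v_1) \to c_0 (v_{l_1})$ from \cite[Proposition 2.2(i)]{ABR-9}, namely $\sup_{n} \frac{v_{l_1}(n)}{n} \sum_{m=1}^{n} \frac{1}{v_1 (m)} < \infty$, and then contradicts this by retaining only the $m=1$ term of the sum, which yields boundedness of $\frac{e^{\al_n / l_1}}{n}$ against the fact that $\sup_n \frac{e^{\al_n/l}}{n} = \infty$ for every $l$. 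You instead exhibit the single witness vector $e_1$, compute $\sC e_1 = (\frac 1n)_n$ directly, and show this sequence lies in no step $\ell_\infty (v_l)$ — which is exactly the same core estimate (the $m=1$ term of the paper's sum is, up to the constant $e^{-\al_1}$, the quantity $v_l(n)\cdot\frac1n$ you bound). What your version buys is twofold: it proves the literal statement that $\sC$ does not \emph{act} in $k_0(V)$ without passing through continuity (the paper's contradiction with $\sC \in \cL(k_0(V))$ implicitly needs the closed graph theorem for (LB)-spaces to be equivalent to non-action), and it needs no external criterion, only the definition of the weighted sup-norms. Your verification that $\sup_n \frac{e^{\al_n/l}}{n} = \infty$ from $\log n \le \frac{\al_n}{2l}$ eventually is the same computation the paper delegates to \cite[Lemma 2.2]{ASM}, and your closing remark about the role of the hypothesis is consistent with Example \ref{ex52}.
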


\begin{proof}
Since $\lim_{n \to \infty } \frac{\log (n)}{\al_n} =0$, it follows from Lemma 2.2 of \cite{ASM} that $\lim_{n \to \infty } n^t e^{-\al_n} =0$ for
each $t \in \N $, which implies $\lim_{n \to \infty } n e^{- \al_n /l} =0$ for each $l \in \N $. In particular,
\begin{equation}\label{51}
\sup_{n \in \N } \frac{e^{\al_n /l}}{n} = \infty , \qquad \forall l \in \N .
\end{equation}

Suppose that $\sC \in \cL (k_0 (V))$, i.e., for every $k \in \N $ there exists $l \in \N $ with $l > k $ such that $\sC : c_0 (v_k) \to c_0 (v_l)$
is continuous. Then, for $k := 1$ there exists $l_1 > 1 $ such that $\sC : c_0 (v_1) \to c_0 (v_{l_1})$ is continuous, equivalently
\begin{equation}\label{52}
    M:= \sup_{n \in \N } \frac{v_{l_1}(n)}{n} \sum^n_{m=1} \frac{1}{v_1 (m)} < \infty ,
\end{equation}
\cite[Proposition 2.2(i)]{ABR-9}. But, via \eqref{52}, we then have for each $n \in \N $ that
$$
\frac{e^{\al_n / l_1}}{n} = v_1 (1) \cdot  \frac{v_{l_1}(n)}{n v_1 (n)}  \le v_1 (1) \cdot  \frac{v_{l_1}(n)}{n} \sum^n_{m=1}  \frac{1}{v_1 (m)}  \le
M v_1 (1)  .
$$
This contradicts \eqref{51} for  $l := l_1$. Hence, $\sC $ does \textit{not} act in $k_0 (V)$.
\end{proof}

\begin{example} \label{ex52} \rm
Define $\al_n := \log (n+1)$ for $n \in \N $. Since $\lim_{n \to \infty } \frac{\log (n)}{\al_n} = 1 \ne 0 $ , the space $k_0 (V)$ is \textit{not} nuclear.
To see that $\sC \in \cL (k_0 (V))$ fix any $k \in \N $ and set $l := k+1$. Noting that $v_r (n) = (n+1)^{1/r}$ for $r, n \in \N $, it follows that
\begin{equation}\label{53}
\frac{v_l (n)}{n} \sum^n_{m=1} \frac{1}{v_k (m)}    = \frac{(n+1)^{1/l}}{n} \sum^n_{m=1}  \frac{1}{ (m+1)^{1/k}}  \le
\frac{2(n+1)^{1/l}}{(n+1)} \sum^{n+1}_{m=1} \frac{1}{ m^{1/k}}  ,
\end{equation}
for each $n \in \N $. If $k =1$, then $l=2$ and it follows from \eqref{53} and the inequality $\sum^{n+1}_{m=1} \frac 1 m \le 1 + \log (n+1)$
that the left-side of \eqref{53} is at most $ \frac{2 (1+ \log (n+1))}{(n+1)^{1/2}}$, for $n \in \N $. For $k > 1 $, using the inequality
$\sum^{n+1}_{m=1} \frac {1}{ m^\delta } \le 1 + \frac{(n+1)^{1- \delta }}{1- \delta }$, $n \in \N $ (valid for each $\delta \in (0 ,1)$), with
$\delta := \frac 1 k$ it follows from \eqref{53} (with $l = k+1$) that
$$
\frac{v_l (n)}{n} \sum^n_{m=1} \frac{1}{v_k (m)}  \le (n+1)^{(\frac{1}{k+1}-1)} + \frac{k (n+1)^{\frac{1}{k+1}-\frac 1 k}}{(k-1)}, \qquad
n \in \N .
$$
In both the cases (i.e., $k=1$ and $k >1$) we see that $\sup_{n \in \N } \frac{v_l (n)}{n} \sum^n_{m=1} \frac{1}{v_k (m)}  < \infty $ and so
$\sC : c_0 (v_k ) \to c_0 (v_l)$ is continuous, \cite[Proposition 2.2(i)]{ABR-9}. Since this is valid for \textit{every} $k \in \N $ and with
$l := k+1$, it follows that $\sC \in \cL (k_0 (V))$.
\end{example}

\begin{example} \label{ex53}\rm
Let $ (j(k))_{k \in \N } \su \N $ be the sequence given by $j (1) := 1$ and $j (k+1) := 2 (k+1) (j(k))^k$, for $k \ge 1 $. Observe that
$j (k+1) > k (j(k))^k + 1 > j (k)$ for all $k \in \N $. Define $\beta = (\beta_n)_{n \in \N }$ via $\beta_n := k (j(k))^k$ for $n = j (k), \ldots,
j (k+1)-1$. Then $\beta $ is non-decreasing with $\lim_{n \to \infty } \beta_n = \infty $. Let $\gamma = (\g_n)_{n \in \N }$ be any
strictly  increasing sequence satisfying $2< \g_n \uparrow 3$. Then the sequence $\al_n := \log (\beta_n + \g_n)$, for $n \in \N $,
satisfies $1 < \al_n \uparrow \infty $ and $\lim_{n \to \infty } \frac{\log (n)}{n} \ne 0 $,  \cite[Remark 2.17]{ASM}. In particular, $k_0 (V)$
it \textit{not} nuclear. To establish that $\sC$ does \textit{not} act in $k_0 (V)$ is suffices to show, for $k:= 1$, that
\begin{equation}\label{54}
\sup_{n \in \N }\frac{v_l (n)}{n} \sum^n_{m=1} \frac{1}{v_1 (m)}  = \infty, \qquad \forall l \in \N .
\end{equation}
So, fix any $l \in \N $. Select $n = j (k)$, for any $k \in \N $, and observe (for \textit{this} $n$) that
\begin{eqnarray*}
&& \frac{v_l (n)}{n} \sum^n_{m=1} \frac{1}{v_1 (m)}   = \frac{(\beta_{j (k)} + \g_{j (k)})^{1/l}}{j (k)} \sum^{j (k)}_{m=1} \frac{1}{\beta_m + \g_m}
\ge  \frac{(\beta_{j (k)} + \g_{j (k)})^{1/l}}{j (k)} \cdot \frac{1}{(\beta_1 + \g_1)} \\
&& \qquad \ge \frac{(k (j (k))^k + \g_{j (k)})^{1/l}}{4 j (k)} \ge \frac{k^{1/l} (j (k))^{(\frac k l)-1}}{4} \ge \frac{k^{1/l} k^{(\frac k l )-1}}{4} ,
\end{eqnarray*}
where we have used $\frac{1}{\beta_1 + \g_1} > \frac 1 4$ and $j (k) \ge k$. Accordingly,
$$
\sup_{n \in \N }\frac{v_l (n)}{n} \sum^n_{m=1} \frac{1}{v_1 (m)}  \ge \sup_{k \in \N }\frac{v_l (j (k))}{j(k)} \sum^{j(k)}_{m=1} \frac{1}{v_1 (m)}
\ge \sup_{k \in \N} \frac{k^{1/l} k^{(\frac k l)-1}}{4} = \infty .
$$
So, \eqref{54} is satisfied and hence, $\sC$ does not act in $k_0 (V)$.

The final two (abstract) results are recorded here in order not to disturb the flow of the text in earlier sections (where these results are
needed). We begin with a fact which is surely known; a proof is included for the sake of self containment.
\end{example}

\begin{lemma}\label{Lereg} Let $E=\ind_k (E_k,\|\ \|_k)$ be a regular  inductive limit of Banach spaces.
Then a subset $\cH\su \cL(E)$ is equicontinuous if and only if the following condition is satisfied:
for every $k \in \N $ there exists $l \in \N $ with $l \ge k $ and $D > 0 $ such that
\begin{equation}\label{eq.Eq}
 \|Tx\|_l\leq D \|x\|_k, \qquad   \forall T\in \cH , \  x\in E_k  .
\end{equation}
\end{lemma}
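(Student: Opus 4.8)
The plan is to deduce both implications from the standard description of the zero-neighbourhood filter of an (LB)-space, invoking the regularity hypothesis only for the necessity. Write $B_k$ for the closed unit ball of $(E_k,\|\cdot\|_k)$ and $\mathrm{acx}(S)$ for the absolutely convex hull of a subset $S$; recall that the absolutely convex $0$-neighbourhoods of $E=\ind_k E_k$ are precisely the absolutely convex sets $W\su E$ for which $W\cap E_k$ is a $0$-neighbourhood of $(E_k,\|\cdot\|_k)$ for every $k$, and that the sets $\mathrm{acx}\big(\bigcup_k\rho_k B_k\big)$ with $\rho_k>0$ form a basis of these (see \cite{Ja}, \cite{MV}). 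For the sufficiency, assume \eqref{eq.Eq} holds for each $k$ and let $V$ be an absolutely convex $0$-neighbourhood of $E$. For every $l$ the trace $V\cap E_l$ is a $0$-neighbourhood in the Banach space $E_l$, so $\ve_l B_l\su V$ for some $\ve_l>0$. For each $k$ fix $l=l(k)\ge k$ and $D=D(k)>0$ satisfying \eqref{eq.Eq} and set $\de_k:=\ve_{l(k)}/D(k)>0$. Then for $T\in\cH$ and $x\in\de_k B_k$ we have $\|Tx\|_{l(k)}\le D(k)\|x\|_k\le\ve_{l(k)}$, so $Tx\in\ve_{l(k)}B_{l(k)}\su V$. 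Putting $U:=\mathrm{acx}\big(\bigcup_k\de_k B_k\big)$, the containment $\de_k B_k\su U\cap E_k$ shows that $U$ is a $0$-neighbourhood of $E$, while linearity of each $T\in\cH$ gives $T(U)=\mathrm{acx}\big(\bigcup_k T(\de_k B_k)\big)\su\mathrm{acx}(V)=V$. Hence $\cH$ is equicontinuous; note that regularity is not needed in this direction.

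For the necessity, assume $\cH$ is equicontinuous and fix $k$. Since $B_k$ is bounded in $E_k$ and the inclusion $E_k\hookrightarrow E$ is continuous, $B_k$ is bounded in $E$. An equicontinuous family maps bounded sets to bounded sets: given a $0$-neighbourhood $V$ of $E$, choose a $0$-neighbourhood $U$ of $E$ with $T(U)\su V$ for all $T\in\cH$, then $\rho>0$ with $B_k\su\rho U$, whence $\bigcup_{T\in\cH}T(B_k)\su\rho V$. Thus $\cH(B_k):=\bigcup_{T\in\cH}T(B_k)$ is a bounded subset of $E$. By regularity there is $m\in\N$ such that $\cH(B_k)$ is contained and bounded in $E_m$; replacing $m$ by $l:=\max\{m,k\}\ge k$ (the linking map $E_m\hookrightarrow E_l$ being continuous) we get $D:=\sup\{\|y\|_l:y\in\cH(B_k)\}<\infty$, that is $\|Tx\|_l\le D$ for all $T\in\cH$ and $x\in B_k$, and homogeneity upgrades this to \eqref{eq.Eq} for all $x\in E_k$.

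The only point requiring genuine care is the sufficiency: one must verify that the set $U$ assembled from the scalars $\de_k$ is truly a $0$-neighbourhood of the inductive-limit topology and that $T(U)\su V$ for every $T\in\cH$. Both follow at once from the quoted description of the $0$-neighbourhood basis of $\ind_k E_k$ together with linearity of $T$ and absolute convexity of $V$; no deeper input is needed. Regularity enters the proof solely through the necessity, where it ensures that the bounded set $\cH(B_k)$ is absorbed by (indeed contained and bounded in) a single step $E_l$.
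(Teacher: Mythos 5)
Your proof is correct. The necessity direction (equicontinuity $\Rightarrow$ \eqref{eq.Eq}) is essentially identical to the paper's: both show that $\bigcup_{T\in\cH}T(B_k)$ is bounded in $E$ because equicontinuous families map bounded sets to bounded sets, and then invoke regularity to locate this set boundedly in a single step $E_l$ with $l\ge k$. The sufficiency direction, however, is where you genuinely diverge. The paper appeals to barrelledness of the (LB)-space $E$ and the Banach--Steinhaus principle, reducing equicontinuity to pointwise boundedness of the orbits $\{Ty:T\in\cH\}$, which \eqref{eq.Eq} gives immediately; your argument instead constructs, for a given absolutely convex $0$-neighbourhood $V$, an explicit $0$-neighbourhood $U=\mathrm{acx}\bigl(\bigcup_k\delta_kB_k\bigr)$ with $T(U)\su V$ for all $T\in\cH$, using only the defining description of the inductive-limit topology. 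Your route is slightly longer but more elementary (no uniform boundedness principle) and makes transparent that regularity plays no role in that implication --- a point the paper's proof also respects but does not highlight. Both arguments are complete; the paper's is the quicker one to write down once one accepts that (LB)-spaces are barrelled, while yours is self-contained at the level of the neighbourhood basis. One small remark: in your construction you only need the inclusion $T(U)\su\mathrm{acx}\bigl(\bigcup_kT(\delta_kB_k)\bigr)$, not the equality you assert, but the equality does hold for linear $T$, so nothing is amiss.
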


\begin{proof}
  First,  assume that  $\cH$ is equicontinuous.  Fix $k\in\N$, in which case the  closed unit ball $B_k$ of $E_k$ is bounded in $E$.
The claim is that $C:=\cup_{T\in \cH}T(B_k)$ is bounded in $E$. Indeed, by equicontinuity of $\cH $, given any $0$-neighbourhood $V$ in $E$
there exists a $0$-neighbourhood $U$ in $E$ such that $T(U)\su V$ for all $T\in\cH$. Since $B_k$ is bounded in $E$, there exists $\lambda>0$ such that $B_k\su\lambda U$ and hence, $T(B_k)\su \lambda T(U)\su\lambda V$ for all $T\in \cH$. It follows that $C\su \lambda V$. Since $V$ is arbitrary, it follows that $C$ is bounded in $E$.
But,  $E$ is regular and so  there exists  $l \ge k$ such that $C$ is contained and bounded in $E_l$. Thus, there exists $D>0$ such that $\|Tx\|_l\leq D$
for all $x\in B_k$ and $T\in \cH$.  Accordingly, the stated  condition \eqref{eq.Eq} is satisfied.

Assume that the stated condition \eqref{eq.Eq} is satisfied. Since $E$ is barrelled,   the Banach-Steinhaus principle is available and so it suffices
 to  show that the set $\{Ty\colon T\in \cH\}$ is bounded in $E$ for each  $y\in E$. So, fix  $y\in E$ in which case   $y\in E_k$ for some $k\in\N$.
Selecting $l\ge k$ and $D>0$ according to condition  \eqref{eq.Eq}, we have $\|Ty\|_l\leq D\|y\|_k$ for all $T\in\cH$.
  Hence, the set $\{Ty\colon T\in \cH\}$ is bounded in $E_l$ and so, also in $E$.
\end{proof}

The following result occurs in  \cite[Lemma 5.2]{ABR}.

\begin{lemma}\label{Le-spectrumNV} Let $E=\ind_n (E_n,\| \boldsymbol{\cdot } \|_n)$ be a Hausdorff inductive limit of Banach spaces. Let $T\in \cL(E)$ satisfy the following condition:
\begin{itemize}
\item[\rm (A)] For each $n\in\N$ the restriction $T_n$ of $T$ to $E_n$ maps $E_n$ into itself and belongs to $\cL(E_n)$.
\end{itemize}
Then the following properties are satisfied.
\begin{itemize}
\item[\rm (i)] $\sigma_{pt}(T ; E)=\cup_{n\in\N}\sigma_{pt}(T_n ; E_n)$.
\item[\rm (ii)] $\sigma(T  ; E)\su \cap_{m\in\N} (\cup_{n=m}^\infty\sigma(T_n ; E_n))$. Moreover, if $\lambda \in \cap_{n=m}^\infty \rho(T_n ;  E_n)$ for some $m\in\N$, then $R(\lambda,T_n)$ coincides with the restriction of $R(\lambda,T)$ to $E_n$ for each $n \geq m$.
\item[\rm (iii)] If  $\cup_{n=m}^\infty\sigma(T_n ; E_n)\su \ov{\sigma(T ; E)}$ for some $m\in\N$, then $\sigma^*(T ; E)=\ov{\sigma(T ;E)}$.
\end{itemize}
\end{lemma}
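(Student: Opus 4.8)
The plan is to unwind the definitions of $\sigma_{pt}$, $\sigma$ and $\sigma^*$ step by step, using two standard facts about a countable inductive limit $E=\ind_n E_n$: each inclusion $E_n\hookrightarrow E$ is continuous (hence maps bounded sets to bounded sets), and a linear map out of $E$ is continuous if and only if each of its restrictions to the steps $E_n$ is continuous. Throughout, condition (A) lets us identify $T|_{E_n}$ with $T_n$. Part (i) is then immediate: if $\lambda\in\sigma_{pt}(T_n;E_n)$ an eigenvector $x\in E_n\setminus\{0\}$ is, by (A), also an eigenvector of $T$ in $E$; conversely an eigenvector of $T$ in $E$ lies in some $E_n$ and, again by (A), is an eigenvector of $T_n$.

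For (ii) I would prove the equivalent inclusion $\bigcup_m\bigcap_{n\ge m}\rho(T_n;E_n)\subseteq\rho(T;E)$. Fix $\lambda\in\bigcap_{n\ge m}\rho(T_n;E_n)$. If $(\lambda I-T)x=0$ for some $x\in E$, then $x\in E_n$ for some $n\ge m$ and $(\lambda I-T_n)x=0$, so $x=0$; thus $\lambda I-T$ is injective. Given $y\in E$, choose $n\ge m$ with $y\in E_n$; then $R(\lambda,T_n)y\in E_n\subseteq E$ is a preimage, so $\lambda I-T$ is surjective. This argument also shows that the algebraic inverse $S:=(\lambda I-T)^{-1}$ on $E$ satisfies $S|_{E_n}=R(\lambda,T_n)$ for every $n\ge m$ (which is the ``moreover'' assertion); continuity of $S:E\to E$ then follows since its restriction to each $E_n$ is the composition of $R(\lambda,T_n)$ with the inclusion $E_n\hookrightarrow E$, hence continuous. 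Therefore $\lambda\in\rho(T;E)$.

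For (iii), one inclusion is free: $\sigma(T;E)\subseteq\sigma^*(T;E)$ and $\sigma^*(T;E)$ is closed, so $\ov{\sigma(T;E)}\subseteq\sigma^*(T;E)$. For the reverse, fix $\lambda\in\C\setminus\ov{\sigma(T;E)}$ and pick $\delta_1>0$ with $B(\lambda,\delta_1)\cap\ov{\sigma(T;E)}=\emptyset$. Combining the hypothesis $\bigcup_{n\ge m}\sigma(T_n;E_n)\subseteq\ov{\sigma(T;E)}$ with part (ii), we get $B(\lambda,\delta_1)\subseteq\rho(T;E)\cap\bigcap_{n\ge m}\rho(T_n;E_n)$ and $R(\mu,T)|_{E_n}=R(\mu,T_n)$ there. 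Set $\delta:=\delta_1/2$. For $x\in E$, choose $n\ge m$ with $x\in E_n$; since $\mu\mapsto R(\mu,T_n)$ is continuous on $\rho(T_n;E_n)$ (Banach-space resolvent), the set $\{R(\mu,T_n)x:\mu\in\ov{B(\lambda,\delta)}\}$ is bounded in $E_n$, hence in $E$, and it equals $\{R(\mu,T)x:\mu\in\ov{B(\lambda,\delta)}\}$. Thus $\{R(\mu,T):\mu\in B(\lambda,\delta)\}$ is bounded in $\cL_s(E)$, and since $E$ is an (LB)-space, hence an (LF)-space, this suffices for $\lambda\in\rho^*(T;E)$ by the characterization of $\rho^*$ recalled in the Introduction. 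Hence $\sigma^*(T;E)\subseteq\ov{\sigma(T;E)}$, completing the proof.

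The main point requiring care is in (iii): because $E$ is only assumed to be a Hausdorff (not necessarily regular) inductive limit, one cannot invoke Lemma \ref{Lereg} to pass from boundedness of the resolvent family on the steps to equicontinuity on $E$; the substitute is the barrelledness of the (LB)-space $E$, which allows boundedness in $\cL_s(E)$ to replace equicontinuity in the definition of $\rho^*$. A secondary, purely bookkeeping obstacle is that every assertion in (ii) must be threaded through the steps $E_n$ and matched up via condition (A).
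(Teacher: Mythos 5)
Your proof is correct. Note that the paper itself does not prove this lemma but merely cites it from [ABR, Lemma 5.2]; your argument is the natural one and all three parts go through: (i) is the straightforward identification of eigenvectors via condition (A); (ii) correctly establishes bijectivity of $\lambda I-T$ step by step, identifies the algebraic inverse on each $E_n$ with $R(\lambda,T_n)$, and gets continuity from the universal property of the inductive limit; and (iii) correctly combines the "moreover" part of (ii) with norm-continuity of the Banach-space resolvents on the compact set $\ov{B(\lambda,\delta)}$ to obtain boundedness of $\{R(\mu,T)x:\mu\in B(\lambda,\delta)\}$ for each $x$, and then uses barrelledness of the (LB)-space $E$ (Banach--Steinhaus) to upgrade $\tau_s$-boundedness to the equicontinuity required in the definition of $\rho^*$ --- exactly the right substitute for regularity, as you observe. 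The only cosmetic point is in the continuity step of (ii): for $n<m$ the restriction $S|_{E_n}$ is not literally $\iota\circ R(\lambda,T_n)$ (that resolvent need not exist), but it factors continuously through $E_m$, and checking continuity on the cofinal family of steps $n\ge m$ suffices.
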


\textbf{Acknowledgements.}
The research of the first two authors was partially
supported by the projects  MTM2016-76647-P (Spain). The second author gratefully acknowledges the support of the Alexander von Humboldt Foundation.

\bigskip
\bibliographystyle{plain}

\end{document}